\documentclass[mnsc,nonblindrev]{informs3noheader} 

\usepackage{bbm}
 \usepackage{setspace}
 \usepackage{algorithm}
 \usepackage{accents}
 \usepackage{booktabs}

\usepackage{multirow}
\usepackage{soul}
\usepackage{float}

\DoubleSpacedXI



\usepackage{natbib}
 \bibpunct[, ]{(}{)}{,}{a}{}{,}%
 %
 %
 %
 %
 %

\TheoremsNumberedThrough     
\ECRepeatTheorems

\EquationsNumberedThrough    

\MANUSCRIPTNO{} 


\newcommand{\lref}[2][]{\hyperref[#2]{#1~\ref*{#2}}}
\newcommand{\eq}[1]{\hyperref[#1]{Eq.~(\ref*{#1})}}
\newcommand{\eqbare}[2][]{\hyperref[#2]{#1~(\ref*{#2})}} 
\newcommand{\multieqbare}[3][]{\hyperref[#2]{#1~(\ref{#2}--\ref{#3})}}
\numberwithin{equation}{section}

\newcommand{\eqbareset}[3][]{\hyperref[#3]{#1~(\ref*{#2}-\ref*{#3})}}

\newcommand{\sectlabel}[1]{\label{#1}}
\newcommand{\eqlabel}[1]{\label{#1}}



\newtheorem{theorem*}{Theorem}
\newtheorem{proposition*}{Proposition}
\newtheorem{corollary*}{Corollary}

%



\newcommand{\ca}{\mathcal}

\def\phcomments{0}
\newcounter{note}[section]
\renewcommand{\thenote}{\thesection.\arabic{note}}
\newcommand{\note}[2]{\refstepcounter{note}\marginpar{\small\bf \textcolor{red}{#1~\thenote}}$\ll${\sf \textcolor{red}{#1's
 Comment~\thenote:}} {\em \textcolor{red}{#2}}$\gg$}

\ifnum\phcomments=1
\newcommand{\notePH}[1]{\note{PH}{#1}}
\else
\newcommand{\notePH}[1]{}
\fi

\ifnum\phcomments=1
\newcommand{\noteSS}[1]{\note{SS}{#1}}
\else
\newcommand{\noteSS}[1]{}
\fi

\newcommand{\highlightcolor}{black} 


\begin{document}




\TITLE{An Optimistic-Robust Approach for Dynamic Positioning of Omnichannel Inventories}

\ARTICLEAUTHORS{%
\AUTHOR{Pavithra Harsha, Shivaram Subramanian, Ali Koc, Mahesh Ramakrishna, Brian Quanz, Dhruv Shah, Chandra Narayanaswami}
\AFF{IBM T. J Watson Research Center, Yorktown Heights, NY 10570s\\ \EMAIL{Author Contact: pharsha@us.ibm.com}}
} 

\ABSTRACT{%
We introduce a new class of data-driven and distribution-free optimistic-robust {\em bimodal} inventory optimization (BIO) strategy to effectively allocate inventory across a retail chain to meet time-varying, uncertain omnichannel demand. 
\textcolor{\highlightcolor}{The bimodal nature of BIO stems from its ability to balance downside risk, as in traditional Robust Optimization (RO), which focuses on worst-case adversarial demand, with upside potential to enhance average-case performance. This enables BIO to remain as resilient as RO while capturing benefits that would otherwise be lost due to endogenous outliers.
Omnichannel inventory planning provides a suitable problem setting for analyzing the effectiveness of BIO's bimodal} strategy in managing the tradeoff between lost sales at stores and cross-channel e-commerce fulfillment costs,
factors that are inherently asymmetric due to channel-specific behaviors.
We provide structural insights about the BIO solution and how it can be tuned to achieve a preferred tradeoff between robustness and the average-case performance. 
\textcolor{\highlightcolor}{Using a real-world dataset from a large American omnichannel retail chain, a business value assessment during a peak period indicates that BIO outperforms pure RO by 27\% in terms of realized average profitability and surpasses other competitive baselines under imperfect distributional information by over 10\%. This demonstrates that BIO provides a novel, data-driven, and distribution-free alternative to traditional RO that achieves strong average performance while carefully balancing robustness.
}}%

\KEYWORDS{Bi-modal strategy, ally-adversary, Kelly's criterion, omnichannel, two-stage robust optimization, inventory management, uncertainty, applications}
\HISTORY{}

\maketitle

%

\section{Introduction}
An increasingly digital economy has led to a proliferation of virtual shopping channels, making omnichannel shopping ubiquitous among today's consumers, a trend that extends across diverse product categories as well as emerging markets that were previously dominated by brick-and-mortar retailing~\citep{neilson}. The immediacy of this omnichannel consumer demand has driven today’s supply chains to be agile and responsive. For example, retail chains now offer various delivery options in addition to serving walk-in customers. These options often come with service guarantees, such as same-day delivery, free two-day shipping, and buy-online-pick-up-in-store (BOPUS) within two hours. To serve this diversified consumer demand, retailers have opened multiple facilities to minimize fulfillment costs and expedite delivery. They also employ cross-fulfillment methods such as serving demand from distant warehouses or utilize ship-from-store (SFS) to reduce lost sales opportunities and/or meet on-time delivery targets. 

 The dynamics in the retail industry in 2020 due to the pandemic have forced retailers to think much more broadly and invest more rapidly in their omnichannel capabilities~\citep{Bourlier2020}. {In fact, the digital transformation is prompting companies to also pool their omnichannel capabilities, including physical assets like warehouse space or last-mile transportation, as well as information, leading to mutual profitability and propelling them further ahead in their omnichannel journey~\citep{HBR2022}.}

Omnichannel inventory systems (OIS) are an essential component of this rapid shift from product-centric to customer-centric supply chains. Optimal inventory positioning across the retail chain is a critical aspect of OIS and aims to place the right amount of inventory in the right location to serve omnichannel demand. The traditional planning methods that are siloed by channel and/or location {fall short of this goal due to conflicting assumptions on how the inventory is used for downstream fulfillment. This may lead to} stock-outs, particularly among walk-in and BOPUS customers, low margins due to expensive fulfillment costs for online demand, excessive carrying costs, and price markdowns.
To mitigate these challenges, network-based planning models that incorporate fulfillment flexibility of the e-commerce demand {(and simultaneously the inflexibility in meeting walk-in demand)}
are key to tap into the efficiencies of cross-channel and cross-location resource pooling. 


Our work aims to develop a data-driven distribution-free network-based inventory management policy for the OIS that optimizes the quantity of inventory that is periodically purchased 
and positioned in the retail chain to dynamically match the supply with time-varying and uncertain omnichannel demands to maximize the retail chain's total profit margins with the dual goals of maximizing sales and minimizing fulfillment costs.

Robust inventory
planning, which is inherently data-driven and distribution free, primarily focuses on the \textit{downside} objective, i.e., worst-case demand scenarios and mitigating risks associated with
adversarial demand patterns. However, to be truly customer-centric, an OIS must equally be able to take advantage of the \textit{upside} of making inventory positioning decisions that drive long-term profitable sales and grow its customer base. 
\textcolor{\highlightcolor}{Achieving both resilience against negative outcomes and the ability to seize positive opportunities requires a \textit{bimodal} resource allocation strategy--one that not only safeguards against worst-case demand but also optimizes for potential gains. This dual approach, which retains data-driven and distribution-free properties, has received less attention in the robust optimization (RO) literature and is the focus of this paper.} We argue that a bimodal inventory management strategy is particularly valuable for balancing the tradeoff between lost sales at physical stores and the costs of cross-channel e-commerce fulfillment, which is at the core of our model. These tradeoffs are asymmetric due to the heterogeneous behavior of different channels, with store inventory facing higher lost sales costs, while e-commerce fulfillment depends on network effects. \textcolor{\highlightcolor}{Hence, omnichannel inventory planning is a challenging problem setting for analyzing the effectiveness of BIO's bimodal strategy.}
The contributions of this paper are as follows:

\begin{enumerate}
\item We { propose a optimistic-robust replenishment policy based on a  data-driven, distribution-free Bimodal inventory optimization (BIO) model that optimizes the  allocations using a convex combination of best- and worst-case demand scenarios via an user-chosen optimism hyperparameter}. Pure adversarial models seek to misalign demands from inventory 
resulting in sales that are less responsive to increased (but potentially skewed) inventory allocations resulting in overly pessimistic replenishment decisions. BIO models overcome this limitation, specifically the presence of decision-dependent outliers, by simultaneously balancing the best- and worst-case scenarios.

\item We { provide insights into the structure of the optimal allocation of the BIO problem} by showing that under  {the optimal allocation
follows the same blending of the best- and worst- case optima as the user-selected scenario blending.}  
This establishes a relatively easy mechanism to identify the corresponding optimal allocation via a simple out-of-sample evaluation, enabling decision-makers to determine an allocation that achieves a desired balance between objectives, such as expected profit and robustness.

\item {Next, we derive an exact linear reformulation of the adversarial bi-linear BIO sub-problem. This is achieved by demonstrating and leveraging the integrality of the extreme points within the selected uncertainty set, which is constructed using inputs from a demand forecasting model.} The derived sub-problem is then employed within a column and cut generation (CCG) algorithm. 
This algorithm is utilized to solve large-scale real-world BIO instances characterized by generalized fulfillment cost structures, operational restrictions, and omnichannel business constraints.

\item Last, we perform extensive Monte Carlo simulation experiments using real data of a retail chain that operates over 150 stores spread across the continental United States.
We observe that the incremental benefit of BIO models, including pure RO, is pronounced in the context of omnichannel operations 
over the baseline models (which are otherwise optimal in a pure walk-in setting). {This advantage arises from the effective utilization of cross-channel resource pooling.}  
We observe that the BIO models can enhance the performance of the pure RO models on average by up to 10\% by tuning a single optimism hyper-parameter across different parameter and initial inventory settings. In a comprehensive business value assessment over a rolling horizon with a realistic fine-grained customer transaction level fulfillment engine, we observe that 
\textcolor{\highlightcolor}{BIO outperforms pure RO by 27\% in terms of realized average profitability and surpasses other competitive baselines under imperfect distributional information by over 10\%. This demonstrates that BIO provides a novel, data-driven, and distribution-free alternative to traditional RO that achieves strong average performance while carefully balancing robustness.}

\end{enumerate}

We would like to note that our modeling choices 
address several practical considerations including:
(1) very low-rate of sales interspersed with brief but sharp sales peaks where the shared inventory system is stretched to capacity,
(2) the degree of uncertainty in multi-period, multi-channel demand forecast are inputs generated by AI/ML models,
(3) the cascading effects of decisions span across a large omnichannel network consisting of hundreds of nodes, 
(4) the manifold impact of lead times, cross-fulfillment costs, and lost sales opportunities, and finally
(5) to demonstrate practical effectiveness on real-world data in a multi-period rolling horizon setting.


\section{Related Literature}~\label{literature}
We classify the related literature into three main areas: (1) Inventory management considering cross-fulfillment, 
transshipment, and omnichannel retailing; (2) RO, distributional, and Pareto robust techniques; and (3) Kelly's principle.


Inventory management with cross-fulfillment  considerations is a topic of growing interest in the operations management community. The pioneering paper of \citet{acimovic2017mitigating} studies the periodic-review joint-replenishment problem for a group of fulfillment centers (FCs) serving e-commerce demand. They propose heuristic policies to mitigate costly demand spill overs across FCs assuming a myopic fulfillment policy in the presence of lead times. This paper and those we describe next advocate the need for integrated planning across locations rather than decentralized strategies, when cross-fulfillment is involved.
\citet{bandi2019sustainable} develop robust-periodic affine policies for newsvendor networks in a backorder setting for single and multiple items. \citet{lim2021integrating} presents a two-phase RO approach (binary decision variables in phase 1 with an elastic uncertainty set and continuous decision variables in phase 2) to the joint replenishment and fulfillment problem for e-commerce networks with drop shipment (no backlog or lost sales). 
\citet{govindarajan2021distribution} develop a semi-definite programming (SDP) based heuristic to solve the distributionally robust multi-location inventory allocation problem on a newsvendor network (single period, e-commerce channel) assuming a L-level nested fulfillment cost structure. 
These single channel e-commerce models as well as the traditional pure walk-in channel models (see~\citealt{jackson2019multiperiod} and the references therein), cannot be used for omnichannel setting precisely because we have to simultaneously account for the inability to cross-fulfill store demand and the flexibility to do so for online demand across multiple locations.

\citet{JolineNaval} develop a heuristic for the joint initial positioning and fulfillment of a seasonal item in a lost sales omnichannel setting. They require distributional assumptions at all locations and assume that (1) any unfulfilled store demand can be cross-fulfilled and (2) the fulfillment cost has an L-level nested structure. 
Our paper also focuses on the multi-location omnichannel setting with lost sales, but considers the problem of periodic replenishment with lead times and general fulfillment cost structures, without relying on the aforementioned assumptions. {We explicitly consider the tradeoff between the lost walk-in sales at stores and cross-channel fulfilment costs that prior single channel or omnichannel models do not address.} 


The reactive nature of fulfillment is analogous to a zero-transshipment setting with no lead time~\citep{yang2007capacitated}. For an extensive review of this topic, readers can refer to~\citet{paterson2011inventory}. Transshipment problems, in general, are intractable~\citep{tagaras1992pooling}, and by extension, it is the case with the replenishment problem with fulfillment that we study.

Omnichannel retailing more broadly (and encompassing inventory management) has received increasing attention in the literature with several works analyzing the impact
of inventory and product information sharing across channels~\citep{gallino2014integration, gao2017online}, fulfillment optimization~\citep{acimovic2015making, jasin2015lp, ali2017optimization}, fulfillment flexibility~\citep{devalve2023understanding}, omnichannel pricing~\citep{lei2018joint, harsha2019practical, OCPX}, and store pick-ups~\citep{gallino2017channel, gao2017omnichannel} and returns~\citep{nageswaran2020consumer}.

Robust Optimization (RO) is a widespread methodology for modeling decision-making problems under uncertainty and it seeks to optimize the worst-case performance across all scenarios within the pre-specified uncertainty set (see~\citealt{bertsimas2011theory, gorissen2015practical}, and the robust inventory management papers referenced above and the references there-in). 
 A long standing criticism of RO, especially among practitioners is that it produces overly conservative decisions.  \citet{iancu2014pareto} propose models that can identify pareto-optimal robust (PRO) solutions that perform just as well in the worst case and same or better in non-worst case scenarios. One well-studied  approach to identify practical viable solutions is to use budget constraints that are derived from limit laws to contain the variability~\citep{bandi2012tractable}. A limitation of this approach we observe in our setting is the inability of RO to overcome endogenous outliers that continues to result in conservative solutions (for e.g., those that can only be eliminated by decision dependent limit-based uncertainty sets whose budget bounds are hard to characterize in closed form, see Section~\ref{model} for a motivating example). Another recent work motivates variable (box) uncertainty sets that can be tuned in accordance with user-specified cost target~\citep{lim2017inventory}. In this paper, we derive data-driven hierarchical budget constraints (in similar spirit to the limit laws) as an independent, fixed input from a demand forecasting model.

 Distributionally robust optimization (DRO) is yet another paradigm to mitigate conservatism by seeking to identify the worst-performing distribution from an ambiguity class~\citep{rahimian2019distributionally}. Recently \citet{gotoh2023data} introduce a class of distributionally optimistic optimization (DOO) models and show that it is always possible to outperform the (finite sample) SAA if ones considers both worst- and best-case methods, noting that the optimistic solutions more sensitive to model error than either worst-case or SAA optimizers. 
In contrast to the pure worst-case (RO, DRO) or pure best-case (DOO), the bi-modal inventory management strategy BIO that we introduce leverages an allied-adversary that simultaneously seeks to gain from potential upsides while managing an adversary across scenarios within a given uncertainty set.

Our approach is inspired by the seminal work of \citet{kelly1956new} that examines the problem of maximizing the growth rate of wealth over a series of independent trials to identify an optimal fraction of resources to bet under uncertainty 
assuming re-investment. 
The optimal `Kelly criterion' for this specific problem setting is an analytically derived value that lies between the twin extremes of maximizing expected gain {(invest as much as possible)} 
and minimizing the probability of ruin {(invest as little as possible)}
in order to achieve long-term gain~\citep{rotando1992kelly}. 
We see a parallel to Kelly’s bimodal decision-making approach within our multi-period robust inventory optimization problem where we seek a balance between procuring and positioning significant inventory levels spread uniformly across the network versus procuring optimistically low quantities and positioning them in alignment with confident point forecasts. \textcolor{\highlightcolor}{However, unlike Kelly’s framework, our setting does not involve periodic reinvestment and the resulting compounding gains or losses.}

\section{Problem setting, notation and assumptions}~\label{settings}
Consider an omnichannel retailer selling a single item and operating a network of retail stores and warehouses, both of which we refer to as nodes and denote by $\Lambda$. The retailer serves walk-in demand and e-commerce demand and uses the inventory in the stores and warehouses to fulfill the demands in various channels
, in particular, the store inventory to meets its walk-in demand and the network inventory across nodes for online demand. The goal of the retailer is to purchase inventory from a supplier $S$ and distribute this 
across the different nodes in the network
to maximize profitability over a finite forward looking time horizon $\ca{T}$, assuming that any unmet demand is lost. In Appendix~\ref{DCtoStores}, we provide the  modifications to the models if we additionally consider re-positioning of inventory between different nodes of the network. We explain the key variables, the parameters, the sequential decision programming framework and our assumptions next, while the detailed notations are described in Table~\ref{tab:notation}. 

\noindent {\bf State:} The state denoted by  ${\bf I}$ comprises of the current on-hand inventory and that in the pipeline for all $l \in \Lambda$. At time $t$ for location $l$, the state is $I_{tl}^{j}$ where $j =0,...,L_l $ where $L_l$ is the 
deterministic lead time to node $l$
with index 0 referring to on-hand inventory. The initial inventory is ${\bf \bar{I}}$.

\noindent {\bf Action:}
The retailer aims to make a decision $x_{tl}$ for all $l \in \Lambda $ which is the quantity of inventory to purchase and move from the supplier node $S$ to node $l$ at time $t$. 
Although we model and solve for decisions across all the $\ca{T}$ periods, we only execute on the current decisions, i.e., decisions are made in a rolling horizon fashion. In practice, the demand forecast is periodically updated based on realized demands and the optimization model is re-solved. A multi-period problem is essential to account for the positive lead time, specifically the future impact of decisions taken in the current period and avoid trivial myopic solutions.


\noindent {\bf Uncertainty:} We model the walk-in demand at the node level and the e-commerce demand at a region level that we refer to as zones $Z$. A zone is a geographically contiguous cluster of zip-codes that have the same cost of shipping from any given origin node. 
We denote the uncertain walk-in and online demands at node $l\in \Lambda$ and zone $z\in Z$ by ${D}^b_{tl}$ and ${D}^o_{tz}$ respectively. 

\noindent {\bf Reward:} To compute the retailer's profitability, we are given (non-negative) prices and back-order penalties in the different channels in the down stream periods. They are denoted by $p^b_{tl}$, $b^b_{tl}$ for walk-in demand and $p^o_{t} $, $b^o_{t} $ for online demand. Note that the walk-in prices and penalties can vary by location while e-com prices are fixed across locations. We assume that $p^b_{tl} + b^b_{tl} > p^o_t +b^o_{t} - c_{lz} \ \forall i \in \Lambda, z \in Z, t \in \ca{T} $ where $c_{lz}$ is the fulfillment cost from node $l$ to any zone $z$. 
This assumption effectively enforces a service priority for a walk-in customer over an online order, thus avoiding stock hedging (hiding inventory in the backroom) for an potential online customer rather than serving an existing walk-in customer.
We also assume that prices and penalties are non-increasing over time to avoid a similar stock hedging phenomenon over time and ensure that it is preferable to fulfill an existing order now rather than wait until a future time period. Let $h_l$ denote the holding cost for left over inventory in node $l$ in any period and $C_{l}$ denote the combined purchase and transportation cost of moving inventory from the supplier to node $l$. 

\begin{table}[t]~\label{tab:notation}
\caption{Notation for the model.}
\begin{minipage}{0.5\textwidth}
\resizebox{0.97\textwidth}{!}{%
\begin{tabular}{rl}
\hline 
\multicolumn{2}{l}{\bf{Indices and Sets}}\\
$l, z, t $, & a node (store or warehouses), a zone, a period,\\
$o, b$, & the online channel, the brick-and-mortar channel, \\
  $\Lambda, Z$, & all nodes, all zones, \\
  ${\ca T}$, & all time periods $\{0 \cdots T-1 \}$ where $T$ is the planning horizon,\\
  $S$, & supplier node,\\
  \multicolumn{2}{l}{\bf{Parameters}}\\
$p^b_{tl}$, $b^b_{tl}$, & price, lost sales penalty for store $l$ sales in period $t$,\\
$p^o_t$,$b^o_t$, & price, lost sales penalty for online sales in period $t$,\\
$h_l$, & holding cost at node $l$ of left over inventory in a period, \\
\hline
\end{tabular}}
\end{minipage}%
\hfill
\begin{minipage}{0.5\textwidth}
\resizebox{0.95\textwidth}{!}{%
\begin{tabular}{rl}
\hline 
$c_{lz}$ & unit cost of fulfilling an online order in zone
              $z$ from node $l$,\\
$C_{l}$, & unit purchase \& transportation cost from supplier $S$ to node $l$,\\
 $L_{l}$, & lead time between supplier node $S$ to node $l$,\\
$\bar{I}_{l}^j$, & initial inventory pipeline for node $l$ arriving in $j$ periods,\\
  \multicolumn{2}{l}{\bf{Decision Variables}}\\
  $I_{tl}^j$, & inventory for node $l$ in period $t$ arriving in $j$ periods,\\
$x_{tl}$, & inventory allocated from supplier in period $t$ for node $l$,\\
$s^b_{tl}$, & fulfilled, lost demand from store $l$ in
              period $t$, \\
$s^{oS}_{tz}$,  & fulfilled, lost online demand from zone $z$ in period $t$, \\
$y_{tlz}$, & fulfilled online demand in zone $z$ from node $l$ in period $t$. \\
\hline
\end{tabular}}
\end{minipage}
\end{table}

\noindent {\bf Sequence of events:} 
The inventory in the pipeline arrives and the retailer places an order with the supplier. \textcolor{\highlightcolor}{For nodes with zero lead time, the order arrives immediately.} 
Then, the omnichannel demand realizes, after which the retailer fulfills it. Finally, the rewards and costs are incurred. 

\noindent {\bf Fulfillment assumptions:} Note that for the purpose of making inventory purchase and distribution decisions,  we assume that the fulfillment decisions are batched within each period. In practice, fulfillment is realized exogenously (to the model) based on customer arrivals and handled by the retailer’s order management system (OMS). This is an independent system that determines the fulfilling store or warehouses on-the-fly for e-commerce customer orders based on a variety of metrics including operational costs,  capacity balancing across nodes, stock outs, markdowns and can also involve complex and dynamic rules that depend on store performance, traffic, and capacity, local weather, and order splitting rules, among other factors~\citep{ali2017optimization}. 
Unlike fulfilment, inventory decisions follow a more regular cadence (say bi-weekly or weekly) and are usually dependent on the transportation schedule of trucks.

\noindent {\bf Optimization paradigm:} 
To optimize the expected cost using stochastic programming, the future demand uncertainty has to be characterized via high dimensional probabilistic models and/or via complex scenarios generation techniques that characterize the dependence across the random variables. Due to its challenging nature and the dependence of the solution on inexact estimations of the true distribution that can adversely affect real-life performance, we adopt a framework that builds on the data-driven robust optimization (RO) paradigm. 

\noindent {\bf Uncertainty set:} In the RO paradigm, the retailer aims to maximize the worst case total profitability assuming the demand, ${D}_{tl}^b$ and ${D}_{tz}^o $, are uncertain and comes from a pre-specified polyhedral set $U$ derived from data. 
Specifically, we assume the following structure for $U$:
\begin{align}
U  &:=
\left\lbrace \ {\bf {D}}^b_{t}, {\bf {D}}^o_{t} \  \forall \ t \in \ca{T} \ \middle| \
  \begin{tabular}{ll}
   ${\bf \bar{D}}^{mL}_{t}\leq {\bf {D}}^m_{t} \leq {\bf \bar{D}}^{mU}_{t}$ &
                                                                    $\quad \forall \ m \in  \{o,b\}, \  t \in \ca{T}$\\
   $ \bar{\bar{D}}^{mL}_{t}\leq {\bf e}^T {\bf {D}}^m_{t} \leq \bar{\bar{D}}^{mU}_{t}$ & $\quad \forall \ m \in  \{o,b\},  \  t \in \ca{T}$\\
  \end{tabular}
\ \right\rbrace,  \label{eqn:uncertainty_set}
\end{align}
Here ${\bf e}$ refers to a vector of 1 and ${\bf {D}}^m_{t} $ refers to the vector of demand across locations (nodes or zones depending on channel $m$). 
The first constraint in (\ref{eqn:uncertainty_set}) restricts the channel-location-period specific demand to a box set and the second, referred to as the budget constraint, requires the channel-period specific total demand across locations to fall within a lower and an upper bound.
\textcolor{\highlightcolor}{The budget bounds are derived through quantile modeling of demand distributions at both local and hierarchical levels in a data-driven manner. For example, this can be achieved by learning from historical demand data using quantile-based models—an entirely data-driven approach. }
In our experiments with real-data, we developed channel specific multi-task demand models 
across locations (or zones for e-commerce) with a focus on accuracy and coherency across the location hierarchy. \textcolor{\highlightcolor}{Multi-task models learn shared/common effects (correlations) across different time-series by utilizing common features~\citep{salinas2019deepar,oreshkin2020nbeats,makridakis2022m5,jati2023hier}.}


In the absence of a hierarchical forecast, one can fall back on central-limit theorem based location-scale normalized budget constraints~\citep{bertsimas2004price, bandi2012tractable}. Unlike a hierarchical forecast, the location-scale method requires a variance estimator at the local level. 
Regardless of the nature of explicit (former) or implicit (latter) pooling methods discussed, budget constraints allow the decision maker to be less conservative and focus on the most probable set of scenarios.
Additional cross-channel and/or multi-period budget constraints can also be included in practice. 
We work with a simple (effective and data-driven) uncertainty set for ease of exposition, and simple modifications suffice to incorporate alternative choices of budget-constrained sets.

\section{Omnichannel  robust inventory optimization model}~\label{model}
The adaptive multi-period RO model can now be formulated as follows:
\begin{align}
Z^*_{R}(\bar{\bf {I}}) = \max_{{\bf x}_t({\bf D}[0:t-1]) \geq 0} \   \min_{{\bf D}[0,T]  \in U
} \ 
             \max_{\substack{\{{\bf s}_t,{\bf y}_t,{\bf I}_{t+1}\} \in\\ F\left({\bf I}_t, {\bf D}_t, {\bf x}_t({\bf D}[0:t-1])\right) }} \ & \ 
 \sum_{t \in \ca{T}}  O({\bf s}_t,{\bf y}_t,{\bf I}_{t+1},{\bf D}_t, {\bf x}_t({\bf D}[0:t-1])) \\
\eqlabel{initialInv} \textrm{subject to. } & \quad  {\bf I}_{0l} = \bar{{\bf I}}_{l} \qquad \forall  \ l \in \Lambda 
   \end{align}
where the objective is: 
\begin{align}   
\label{objective} \nonumber  O({\bf s}_t,{\bf y}_t,{\bf I}_{t+1},{\bf D}_t, {\bf x}_t) = & 
   \sum_{l \in \Lambda} \Big[p^b_{tl}  s^b_{tl} - b^b_{tl} (D^b_{tl} -  s^b_{tl}) \Big] + \sum_{z \in Z} \left[
                 \sum_{l \in \Lambda}  (p_t^o -c_{lz} ) y_{tlz} -  b_t^o \left(D_{tz}^o - \sum_{l \in \Lambda} y_{tlz}\right)\right] \\
                 &  - \sum_{l \in   \Lambda}\left[  h_l I^0_{t+1,l} + 
                 C_{l} x_{tl}
                 \right] 
                \end{align}
and the fulfillment set $ F({\bf I}_t, {\bf D}_t, {\bf x}^t) $ is the set $\{{\bf s}_t,{\bf y}_t,{\bf I}_{t+1}\} \geq 0 $ that satisfies the following constraints:
\begin{align}
\label{walkinSales} & s^b_{tl} \leq {D}_{tl}^b 
                                 &&  \forall \ l \in \Lambda,\\
\label{ecomSales} &\sum_{l \in\Lambda} 
  y_{tlz} \leq {D}_{tz}^o  && \forall \    z \in Z,\\
\eqlabel{invBalance}&  s^b_{tl}  + \sum_{z\in Z}
  y_{tlz} + I_{t+1,l}^0 = I_{tl}^0  + I_{tl}^1 \mathbbm{1}_{L_{l} > 0} + x_{tl}  \mathbbm{1}_{L_{l}=0} && \forall \ l \in \Lambda,\\
\eqlabel{stateupdate} & I^j_{t+1,l} = I^{j+1}_{tl}  \mathbbm{1}_{j < L_{l}}+ x_{tl} \mathbbm{1}_{j = L_{l}} &&  \forall \ 1\leq j \leq L_{l}, \ l \in \Lambda
\end{align}
This model is an adaptive multi-stage optimization problem with 3 stages in each period: inventory purchase and distribution, followed by the adversary choosing the worst-case demand, and finally, the retail fulfillment. 
Observe that the inventory decision variables $x_{tl}$ are non-anticipatory and fully adjustable/adaptive (i.e., not static and depend on all prior realizations of demand ${\bf D}[0,t-1]$). The objective function (\ref{objective}) models the total profitability of the retailer from allocation, distribution and fulfillment. 
The first and second terms represents the profit from the store and e-commerce channels respectively and are obtained as the difference of the respective channel-specific revenue less the lost sales penalty cost. The third term sums the holding cost from the left over inventory, and the purchase cost from supplier and/or the transportation cost to move inventory from the supplier to the nodes. As far as the fulfillment constraint set, the constraints (\ref{walkinSales}-\ref{ecomSales}) ensures that the sales is less than the channel specific demand in every period and location. Constraint (\ref{invBalance}) enforces inventory balance for each location and time period. 
Finally, constraint (\ref{stateupdate}) updates the state (i.e., the inventory pipeline). 

The above formulation can be enhanced with business rules to capture retailer-specific operational requirements. 
For example, we can specify transportation capacities on $x_{tl}$ and fulfillment capacities on $y_{tlz}$ and create $y_{tlz}$ variables only for edges $lz$ originating from SFS eligible nodes.
If the retailer prefers warehouse fulfillment of online demand to stores, the cost $c_{lz}$ can be modified to reflect the additional savings, i.e., labor savings besides shipping cost. 
Some retailers set internal on-time delivery service targets, e.g., 80\% of all e-commerce fulfillment should be delivered in 2 days. Such a goal can be implemented as follows: $\sum_{\{lz\} \in A}y_{tlz} \geq 0.8 \sum_{l \in \Lambda, z \in Z} y_{tlz}$ where $A = \{l \in \Lambda, z \in Z| \textrm{delivery time between $l$ and $z$ is less than 2 days} \}$. 
Such goals encourage the positioning of the inventory closer to the customer, an aspect that is important to consider in an omnichannel world. Today's customer has diverse delivery options including 2-hour delivery and curb-side pickup, and demand can immediately turn into lost sales if the product is not available to a customer within the specified time-window.
%



For simplicity of exposition of the core {\em bimodal optimistic-robust} idea, we consider a static policy for the inventory decisions where the dependence of ${\bf x}_t$ on {demand ${\bf D}[0,t-1]$)} is dropped, 
while keeping the adaptive nature of fulfillment decisions intact. The problem here reduces to a two-stage NP-hard problem~\citep{ben2004adjustable} that we solve via cut-generation techniques in Section~\ref{BendersSec}. 
\textcolor{\highlightcolor}{Note that the drawback of a static policy is its inability to adapt to new information in future periods, leading to myopic decisions. In contrast, while an adaptive policy is more responsive, it suffers from the curse of dimensionality when handling large state spaces. As mentioned in Section~\ref{settings}, decisions of the static policy are executed only for the current period, and the next static policy is recomputed in a rolling-horizon fashion, allowing for some adaptability. The optimistic-robust ideas proposed in this paper for static policies extend to alternative heuristic approaches, such as affine policies for adaptive decisions (see ~\citealt{ben2004adjustable}).}

\subsection{Static robust policy to omnichannel inventory management} For concreteness, the formulation is as follows:
\begin{align}
\label{staticRobustFormulation} Z^*_{SR}({\bf \bar{I}}) = \max_{{\bf x}_t \geq 0 } \   \min_{{\bf D}[0,T] \in U
} \ 
             \max_{\{{\bf s}_t,{\bf y}_t,{\bf I}_{t+1}\} \in  F\left({\bf I}_t, {\bf D}_t, {\bf x}_t\right) } \ & \ 
 \sum_{t \in \ca{T}}  O({\bf s}_t,{\bf y}_t,{\bf I}_{t+1},{\bf D}_t, {\bf x}_t) 
   \end{align}
We simplify constraints \eqref{initialInv}, \eqref{invBalance}, and \eqref{stateupdate} by eliminating the inventory pipeline variables $I_{tl}^j$ for all $j$ by using the static inventory variables instead. Dropping the superscript $j$, we obtain:
\begin{align}
  \eqlabel{invBalance_new}&  s^b_{tl}  + \sum_{z\in Z}
  y_{tlz} + I_{t+1,l} = I_{tl}  + \bar{I}_{l}^{t+1}  \mathbbm{1}_{t<L_l}  + x_{t-L_{l},l}\mathbbm{1}_{t \geq L_{l}}  && \forall \ l \in \Lambda,\\
 &  I_{0l} = \bar{I}_{l}^0 && \forall \ l \in \Lambda.
\end{align}

\noindent {\bf Insights into the solution structure:} 
{\em Typical behavior of the static robust adversary (inner min-max problem) is to select skewed worst-case demands that do not align with inventory.} This can be gathered by observing the objective terms in Eq. \eqref{objective}. Barring the last term (noting that $x$ is fixed for the inner problem), each term results in a drop in profitability without a sale. Therefore, skewed inventory allocations $x$ are of little value in 
the worst case as they result in no revenue and increased costs. So, {\em a typical robust solution tends to be a fair and uniform distribution of inventory across locations so that the adversary does not benefit from a skewed choice of demand.} This is a valuable feature of robust solutions 
in contrast to their deterministic counterparts. This result was proved in~\citet{govindarajan2021distribution} for a special case.

 Another aspect in the omnichannel environment is the {\em asymmetrical adversarial behavior that impacts the walk-in channel far more than the online channel}. Unlike walk-in demand, any online order can be satisfied using SFS fulfillment 
 even when the inventory positioning is unfavorable. 

\noindent {\bf Downsides of a robust solution:} {\em Robust models are overly focused on protecting downside and ignore potential upside and tend to undervalue a wide-range of allocations.} 
Consider, for example, the robust model applied to 3 symmetric locations with just walk-in demand that satisfy a local budget of [0,3] in each location and a global bound of [1,6]. The adversarial sales with no initial inventory at all locations does not increase even when one of the locations has high inventory, i.e., allocation [0,0,0] has the same adversarial sales as allocation [0,0,3] for example (with [3,3,0] as the only adversarial sales in the latter and there are many more degenerate options including this for the former). In real-world data with several nodes, it is rare that the scenarios are so adversarial in nature that demand practically mis-aligns with inventory everywhere. On the contrary, with non-adversarial distributions {\em statistically there is a lower chance of lost sales in locations with higher inventory, which the robust model with its chosen objective and budget constraints fails to capture.} 
{This results in endogenous outliers in the uncertainty set which are chosen as the optimal adversarial response. This outlier effect is magnified by the initial inventory variations across locations that increase the asymmetry and result in optimizing for adversarial scenarios that are rarely encountered. }

We therefore propose an alternative RO approach that uses the idea of an `allied-adversary' that incorporates the optimism associated with a increase sell-through that can occur with higher allocations. We continue to work within the RO framework 
for ease of model specification and to avoid the solution dependence on several uncertain probability estimates across the network.

\subsection{Optimistic-robust optimization: An ally-adversary model}

A high-level idea in Kelly’s approach that motivates our model is to view inventory purchase from a supplier and its positioning as a sequence of investment contributions to a portfolio of store locations. Here the decision maker not only positions herself for the adversarial worst case but simultaneously explores profitable inventory purchase and positioning decisions that boost cumulative (expected) gain. 
Toward this goal, we extend the aforementioned distribution-free unimodal RO model 
and propose a distribution-free \textit{bimodal} RO formulation that also analyzes the most favorable store demands that are aligned with inventory allocations within the uncertainty set.

A rich variety of bimodal objective functions and models can be analyzed in such an ally-adversary setting. We take one step in this direction by demonstrating that by treating the degree of optimism, that we denote by $\lambda$, as a machine-learning hyper-parameter, one can gainfully identify a data-driven tradeoff between the best- and worst-case demand scenarios. 
The resultant demand pattern is obtained based on the chosen objective and the decision-maker’s degree of optimism $\lambda$ (akin to Kelly's investment fraction $f$) and will lie in between the two extremes of best- and worst- case demands. We describe our canonical approach below. See Appendix~\ref{extensions} for alternative ways to exploit a location-specific information edge. 

Let us denote the allied best-case demand by ${\bf D}^+$ and the adversarial worst-case demand by ${\bf D}^-$ respectively, both in set $U$, in response to an inventory decision $x$. We use the optimistic hyper-parameter $\lambda$ to identify the net demand at any location as a convex combination of best-case and worst-case demand. As $U$ is a polyhedral set, the resultant demand scenario is also in set $U$. In the proposed (static) Bimodal Inventory Optimization Model (BIO-$\lambda$), we identify the inventory decisions along with the optimistic demands jointly in the first stage. In the second stage, we solve a sub-problem similar to RO to identify the adversarial demands for the given allocations and optimistic demands at the fixed $\lambda$ parameter, and in the third stage, calculate the fulfilment decisions to the resultant demand. We formulate BIO-$\lambda$ as follows.
\begin{align}
\label{bioformulation} Z^*_{SBIO - \lambda}({\bf \bar{I}}) = \max_{{\bf x}_t \geq 0 , {\bf {D}}^+[0,T] \in U
} \  \min_{{\bf D}^-[0,T] \in U
} 
             \max_{\substack{\{{\bf s}_t,{\bf y}_t,{\bf I}_{t+1}\} \in\\ F\left({\bf I}_t, \lambda {\bf D}^+_t + (1-\lambda) {\bf D}^-_t, {\bf x}_t\right) }}  &  
 \sum_{t \in \ca{T}}  O({\bf s}_t,{\bf y}_t,{\bf I}_{t+1},\lambda {\bf D}^+_t + (1-\lambda) {\bf D}^-_t, {\bf x}_t) 
   \end{align}

 If $\lambda$ is zero, we retrieve the original adversarial RO model (i.e., RO/pure RO is the same as BIO-0). From a computational perspective, static BIO-$\lambda$ is identical in complexity to the static pure RO problem. In particular, the optimistic part of the problem (finding ${\bf D}^+$) is a relatively easy problem and  
is jointly solved with allocation, and requires no additional information beyond what is already specified for the pure adversarial part of the problem. Section~\ref{BendersSec} describes our solution strategy for the BIO model.

For a given allocation $x$, the best-case outcome for problem $BIO-1$ occurs when the demand is maximally aligned with the inventory distribution across locations (demand as close to the inventory levels as possible)
and the worst-case is when demand is maximally mis-aligned with the inventory (i.e., high demands when there is little or no inventory and vice-versa). See Appendix~\ref{closedFormSingle}, for a closed form solution of BIO-0 and BIO-1 for a single walk-in location setting. The following theorem provides insight into the structure of the ${BIO-\lambda}$ solution and connects the best- and worst-case solution via a $\lambda$-superposed line segment. 
\textcolor{\highlightcolor}{
\begin{theorem}~\label{superpositionTheorem}
Given $\lambda$, there exists an $x^*_{BIO-\lambda} $ such that $x^*_{BIO-\lambda} = \lambda x^*_{BIO-1} + (1-\lambda)x^*_{BIO-0}$ and the corresponding optimal objective value $Z^*_{BIO-\lambda}$ satisfies $Z^*_{BIO-\lambda} = \lambda Z^*_{BIO-1} + (1-\lambda)Z^*_{BIO-0}$. In other words, for any given $\lambda$, there exists an optimal allocation for $BIO-\lambda$ that is a linear superposition of the respective optimal quantities for the best case scenario (when $\lambda = 1$, i.e., $BIO-1$) and the robust scenario (when $\lambda = 0$, i.e., $BIO-0$). Moreover, the optimal objective value follows  the same superposition structure. 
\end{theorem}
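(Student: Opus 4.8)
The plan is to reduce the three-stage max-min-max in \eqref{bioformulation} to a tractable value-function form and then bracket $Z^*_{BIO-\lambda}$ between two matching bounds. The engine is the observation that, \emph{with zero initial inventory}, every constraint defining $F(\cdot)$ together with the objective $O$ in \eq{objective} is linear and homogeneous (degree one) in the stacked vector $(\mathbf{s},\mathbf{y},\mathbf{I},\mathbf{D},\mathbf{x})$, since all constant right-hand-side terms $\bar I_l^{\,j}$ vanish. Define the inner fulfillment value $G(\mathbf{D},\mathbf{x}) := \max_{\{\mathbf s,\mathbf y,\mathbf I\}\in F(\mathbf 0,\mathbf D,\mathbf x)}\sum_{t}O(\cdot)$. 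Two facts follow: (i) $G$ is positively homogeneous of degree one and \emph{jointly concave} in $(\mathbf D,\mathbf x)$, because averaging two optimal fulfillments is feasible for the averaged right-hand side and attains the averaged objective; and (ii) the inner adversarial minimization is attained at a vertex of the polytope $U$. With this, I would rewrite $Z^*_{BIO-\lambda}=\max_{\mathbf x\ge 0,\mathbf D^+\in U}\min_{\mathbf D^-\in U}G(\lambda\mathbf D^++(1-\lambda)\mathbf D^-,\mathbf x)$, together with the two extreme cases $Z^*_{BIO-1}=\max_{\mathbf x\ge0}B(\mathbf x)$ and $Z^*_{BIO-0}=\max_{\mathbf x\ge0}W(\mathbf x)$, where $B(\mathbf x)=\max_{\mathbf D\in U}G(\mathbf D,\mathbf x)$ and $W(\mathbf x)=\min_{\mathbf D\in U}G(\mathbf D,\mathbf x)$.

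First I would establish the easy inequality $Z^*_{BIO-\lambda}\ge \lambda Z^*_{BIO-1}+(1-\lambda)Z^*_{BIO-0}$ constructively. Fix the candidate allocation $\mathbf x=\lambda x^*_{BIO-1}+(1-\lambda)x^*_{BIO-0}$ and the ally demand $\mathbf D^+=\mathbf D^{+*}_{BIO-1}$ from the $BIO\text{-}1$ optimum. For any adversary $\mathbf D^-$, applying joint concavity (Jensen) of $G$ to the convex combination of the pairs $(\mathbf D^{+*}_{BIO-1},x^*_{BIO-1})$ and $(\mathbf D^-,x^*_{BIO-0})$ gives
\[
G\!\left(\lambda\mathbf D^{+*}_{BIO-1}+(1-\lambda)\mathbf D^-,\ \mathbf x\right)\ \ge\ \lambda\,Z^*_{BIO-1}+(1-\lambda)\,G(\mathbf D^-,x^*_{BIO-0}).
\]
Minimizing the right-hand side over $\mathbf D^-\in U$ leaves $\lambda Z^*_{BIO-1}+(1-\lambda)W(x^*_{BIO-0})=\lambda Z^*_{BIO-1}+(1-\lambda)Z^*_{BIO-0}$, and since $Z^*_{BIO-\lambda}$ is a maximum over $(\mathbf x,\mathbf D^+)$, the bound follows. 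This already produces the superposed allocation named in the theorem.

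The harder half is the reverse inequality $Z^*_{BIO-\lambda}\le \lambda Z^*_{BIO-1}+(1-\lambda)Z^*_{BIO-0}$. I would emphasize that a naive pointwise argument fails: because $G$ is \emph{concave} (not convex) in $\mathbf D$, simply letting the adversary play the pure worst case $\bar{\mathbf D}$ bounds $G$ in the wrong direction, so the ally's alignment of $\mathbf D^+$ with $\mathbf x$ appears to help superlinearly. My plan is weak LP duality that exploits zero initial inventory. Dualizing only the innermost fulfillment LP, its constraint matrix and cost are independent of $(\lambda,\mathbf D,\mathbf x)$ --- only the right-hand side $B_D\tilde{\mathbf D}+B_x\mathbf x$ scales --- so the fulfillment-dual polyhedron $\Pi$ and the allocation-dual-feasibility inequalities $c_x+B_x^{T}\pi\le 0$ are \emph{identical} across $BIO\text{-}0$, $BIO\text{-}1$, and $BIO\text{-}\lambda$. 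The intended certificate combines the adversary response $\mathbf D^-=\bar{\mathbf D}_{BIO-0}$ with a dual multiplier built from the two subproblems, so that the dualized $\max_{\mathbf D^+\in U}$ term collapses to (at most) $Z^*_{BIO-1}$ at weight $\lambda$ and the residual $\mathbf D^-$ term to $Z^*_{BIO-0}$ at weight $1-\lambda$, while $c_x+B_x^{T}\hat\pi\le 0$ keeps $\max_{\mathbf x\ge 0}$ bounded (contributing zero at $\mathbf x=\mathbf 0$) --- this last step being precisely where zero initial inventory is used.

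I expect the main obstacle to be exactly this upper bound, and within it, the reconciliation of a \emph{single} shared dual multiplier across both the $\lambda\mathbf D^+$ and the $(1-\lambda)\mathbf D^-$ contributions of the net demand: one must construct an adversary-plus-dual pair that simultaneously certifies the best-case weight against $Z^*_{BIO-1}$ and the worst-case weight against $Z^*_{BIO-0}$. This in turn rests on a clean strong-duality/saddle-point statement for the robust $BIO\text{-}0$ subproblem guaranteeing the pair $(\,\cdot\,,\bar{\mathbf D}_{BIO-0})$ of value $Z^*_{BIO-0}$, which is plausible given the vertex structure of $U$ and the shared constraint matrix but requires care. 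Once both inequalities are in hand they force $Z^*_{BIO-\lambda}=\lambda Z^*_{BIO-1}+(1-\lambda)Z^*_{BIO-0}$; the allocation $\lambda x^*_{BIO-1}+(1-\lambda)x^*_{BIO-0}$ from the lower-bound construction attains this value and is therefore an optimal $x^*_{BIO-\lambda}$, which is exactly the claimed superposition of allocations and objectives.
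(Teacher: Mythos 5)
Your lower-bound half is correct, and it is essentially the paper's own first direction recast in value-function language: the paper proves $Z^*_{BIO-\lambda}\ \geq\ \lambda Z^*_{BIO-1}+(1-\lambda)Z^*_{BIO-0}$ by scaling and adding feasible solutions, $\{x,s,y,I\}_{\lambda}=\lambda\{x,s,y,I\}_{1}+(1-\lambda)\{x,s,y,I\}_{0}$ with $\{D^+\}_{\lambda}=\{D^+\}_{1}$, $\{D^-\}_{\lambda}=\{D^-\}_{0}$, which is exactly your positive-homogeneity-plus-concavity (Jensen) step for $G$; zero initial inventory is what makes the balance constraints homogeneous, as you correctly identify. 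The genuine gap is the upper bound $Z^*_{BIO-\lambda}\leq \lambda Z^*_{BIO-1}+(1-\lambda)Z^*_{BIO-0}$, which you leave as a plan --- and the plan as stated cannot succeed. Freezing the adversary at the static point ${\bf D}^-=\bar{\bf D}_{BIO-0}$ \emph{relaxes} the inner minimization, so every certificate you build afterwards (weak duality only adds slack) bounds $Z^*_{BIO-\lambda}$ by at least $\max_{{\bf x}\geq 0,\,{\bf D}^+\in U} G\bigl(\lambda {\bf D}^+ +(1-\lambda)\bar{\bf D}_{BIO-0},\,{\bf x}\bigr)$, and this quantity can strictly exceed the target, because the ally aligns both ${\bf D}^+$ and ${\bf x}$ with the now-known adversary. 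Concretely: two walk-in locations, $T=1$, $L=h=0$, $p=160$, $b=0$, $c=40$, $U=\{d\in[0,3]^2:\,1\leq d_1+d_2\leq 3\}$. Then $Z^*_{BIO-1}=360$, $Z^*_{BIO-0}=80$ at $x^*_{BIO-0}=(1,1)$ with worst-case responses including $(1,0)$ and $(3,0)$, and for $\lambda=\tfrac12$ the theorem's value $220$ is attained at the superposed $x=(2,\tfrac12)$ with $D^+=(3,0)$ (both adversary vertices $(1,0)$ and $(0,1)$ then yield sales $2$, value $320-100=220$). But with $D^-$ frozen at $(1,0)$, taking $D^+=(3,0)$ and $x=(2,0)$ gives net demand $(2,0)$ and value $160\cdot 2-40\cdot 2=240>220$; freezing the equally-worst-case $(3,0)$ lets the ally reach $360$. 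So no single fixed $\bar{\bf D}_{BIO-0}$ plus one dual multiplier can certify $220$.

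The deeper reason is that the "clean strong-duality/saddle-point statement" you hope to lean on for $BIO$-$0$ is false: $G$ is \emph{concave}, not convex, in ${\bf D}$, so $\max_{x}\min_{D}$ and $\min_{D}\max_{x}$ differ (in the example above, $80$ versus $\min_{D\in U}120(d_1+d_2)=120$), and there is no fixed worst-case demand that certifies $Z^*_{BIO-0}$ against all allocations; the certifying adversary must adapt to $({\bf x},{\bf D}^+)$. The paper closes this direction primally rather than by duality: it starts from the \emph{optimal} $BIO$-$\lambda$ tuple, including its realized adversarial demand and optimal fulfillment, and explicitly disaggregates it into feasible solutions of $P^{BIO-1}$ and $P^{BIO-0}$ --- keeping the demands, splitting sales via $\{s\}_1=\min\{\{D\}^+_1,\{s\}_{\lambda}/\lambda\}$ and $\{s\}_0=(\{s\}_{\lambda}-\lambda\{s\}_1)/(1-\lambda)$, splitting $y$ by a greedy location ordering, setting $\{I\}_1=0$, $\{I\}_0=\{I\}_{\lambda}/(1-\lambda)$, and backing out $\{x\}_1,\{x\}_0$ from the homogeneous balance constraints --- and then sandwiches: improve each piece to its respective optimum and recombine via the (already established) superposition direction. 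To complete your proof you would need to replace the fixed-$\bar{\bf D}$ dual certificate with a decomposition of this adaptive kind applied at the optimal $\lambda$-solution, not a static adversary guessed in advance.
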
}

This theorem shows that 
the ${BIO-\lambda} $ problem can be decomposed into independent best- and worst-case problems that could be easily recombined. This suggests an alternative computational strategy of first solving the independent problems (just once) and thereafter one can simply superpose these solutions for any $\lambda$. Table~\ref{tab:example0} summarizes the results for a 3-location network having only walk-in demands that empirically demonstrate that the optimal allocation to the BIO-$\lambda$ problem is a superposition of the best and worst cases. 
\begin{table}[t]
\centering
\scalebox{0.75}{
\begin{tabular}{@{}cccccccccc@{}}
\toprule[1.0pt]
Setting	& \phantom{a} &	\multicolumn{2}{c}{p = 0, b = 160}						& \phantom{a} &	\multicolumn{2}{c}{p = 160, b = 0}	& \phantom{a} &	\multicolumn{2}{c}{p = 80, b = 80}	\\\cmidrule[0.5pt]{3-4} \cmidrule[0.5pt]{6-7} \cmidrule[0.5pt]{9-10}
Method	&&	Allocation	&	Estimated Obj	&&	Allocation			&	Estimated Obj	&&	Allocation				&	Estimated Obj	\\\cmidrule[0.5pt]{1-1}\cmidrule[0.5pt]{3-4} \cmidrule[0.5pt]{6-7} \cmidrule[0.5pt]{9-10}
Pure RO	&&	[3,3,3]	&	-360	&&	[1,1,1]	&	40	&&	[1.75,1.75,1.75]	&	-130	\\
Optimistic	&&	[1,0,0]	&	-40	&&	[3,3,0]	&	720	&&	[3,3,0]	&	240	\\
BIO-25\%	&&	[2.5,2.25,2.25]	&	-280	&&	[1.5,1.5,0.75]	&	210	&&	[2.0625,2.0625,1.3125]	&	-37.5	\\
BIO-50\%	&&	[2,1.5,1.5]	&	-200	&&	[2,2,0.5]	&	380	&&	[2.375,2.375,0.875]	&	55	\\
BIO-75\%	&&	[1.5,0.75,0.75]	&	-120	&&	[2.5,2.5,0.25]	&	550	&&	[2.6875,2.6875,0.4375]	&	147.5	\\
\bottomrule[1.0pt]
\end{tabular}
}
\caption{Example of Theorem 1 in a symmetric 3-location walk-in only setting with $L=0$, $h=0$, $c=40$ and $U = \{(d_1,d_2,d_3)| 0\leq d_i \leq 3 \ \forall \ i = 1,2,3; \textrm{ and } 1\leq d_1+d_2+d_3 \leq 6 \}$.} 
\label{tab:example0}
\end{table}

The questions that remain are (1) how does a BIO solution for a chosen $\lambda$ perform in a Monte-Carlo simulation and (2) how should this performance be measured? 
%
%
%
%
%
%
\color{\highlightcolor}
The above theorem suggests that we can perform a one-dimensional search over $\lambda \in [0,1]$, where each value corresponds to a BIO-$\lambda$ allocation, ensuring an approach that maintains data-driven and distribution-free properties. 
We formalize this result as a corollary without proof and is immediate from Theorem~\ref{superpositionTheorem}. 



\begin{corollary}~\label{biolambda_optLambdascoring}
Given a data set from $U$, identifying the sample optimal $\lambda^*$ that optimizes for a user-specified objective function (e.g., SAA) reduces to a single-dimensional optimization problem using $x^*_{BIO-0}$ and $x^*_{BIO-1}$. For a convex objective, it reduces to a bisection search on the $\lambda \in [0,1]$ line segment. 
\end{corollary}

Note that this approach may not yield the globally optimal solution for the user-specified objective, as achieving true optimality may require knowledge of the underlying distribution or access to a large dataset or may need a location-specific $\lambda$ describe in Appendix~\ref{extensions}. Instead, it offers a simple, data-driven, distribution-free method that approximately optimizes the chosen objective by assessing out-of-sample performance on a relatively small validation set. This stands in contrast to the larger training dataset typically required to compute an exact SAA solution, for example, that maximizes mean returns~\citep{kleywegt2002sample,bertsimas2018robust}.
Using BIO, one could aim to balance optimistic and pessimistic (conservative) scenarios via out-of-sample analysis—enhancing average-case performance while preserving robustness.

\color{black}

The proposed BIO methodology is not restricted to inventory problems and can be applied to any compatible scarce resource allocation problem where RO is viable while noting that the above results may need to be validated in other settings. 

 Next we expand on the BIO-$\lambda$ formulation. In the general case, we will have to introduce allied demands in all channels as discussed in the formulation (\ref{bioformulation}). Noting the lop-sided {impact of the adversary on the channels, we limit the allied demands to  the walk-in channel only as it does not have a recourse opportunity. 
 As our chosen uncertainty set $U$ can be decomposed by channel, we are easily able to apply this restriction. } 
 The resultant BIO formulation exhibits a multi-class demand structure as shown below. 
\begin{align}
\nonumber Z^{SBIO - \lambda}({\bf I}) & = \max_{(x, s^+) \geq 0, {\bf {D}}^+ \in U
} \ \min_{{\bf {D}}^- \in U
} \ 
             \max_{(s^-, y,I)\geq 0}  \ 
 \sum_{t \in \ca{T}}
  \sum_{l \in \Lambda} \Big[(p^b_{tl} + b^b_{tl}) (s^{b+}_{tl} + s^{b-}_{tl} ) - b^b_{tl} (\lambda D^{b+}_{tl} +  (1-\lambda) D^{b-}_{tl})
               \Big] &&\\
\label{bioFormulation}  &  +  \sum_{t \in \ca{T}}\sum_{z \in Z} \left[(p_t^o +  b_t^o  -c_{lz} ) y_{tlz} - b_t^o D_{tz}^{o-} \right] 
                   - \sum_{t \in \ca{T}} \sum_{l \in   \Lambda}\left[  h_l I^0_{t+1,l} + C_l x_{tl}
                   \right] \\
\nonumber \textrm{subject to. } 
\ & s^{b+}_{tl} \leq \lambda D_{tl}^{b+} 
                                 && \hspace{-4cm} \forall \ l \in \Lambda, t \in \ca{T}\\
& s^{b-}_{tl}  \leq  (1-\lambda) D_{tl}^{b-} 
                                 && \hspace{-4cm} \forall \ l \in \Lambda, t \in \ca{T}\\
& \sum_{l \in
  \Lambda} y_{tlz} \leq D_{tz}^{o-}  && \hspace{-4cm} \forall \    z \in Z, t \in \ca{T}\\
& s^{b+}_{tl}+ s^{b-}_{tl}  + \sum_{z\in Z}
  y_{tlz} + I_{t+1,l} = I_{tl}  + \bar{I}_{l}^{t+1}  \mathbbm{1}_{t<L_l}  +  x_{t-L_{l},l}\mathbbm{1}_{t \geq L_{l}}  &&\hspace{-4cm} \forall \ l \in \Lambda,                                        t \in \ca{T}\\
 \label{bio_initialInv} & I_{0l} = \bar{I}_{l}^0 && \hspace{-4cm} \forall \ l \in
                               \Lambda.
\end{align}
We write separate sales constraints for the optimistic and pessimistic part of the demand vector by creating corresponding optimistic and pessimistic terms in the objective function.

 Next, we provide comparative insight into the optimal solution structure of 
the (static) unimodal RO strategy versus its bimodal counterpart using a simple example  (leveraging the cut-generation approach in Section~\ref{BendersSec}). 
We constrain the allocations to be integer-valued in these experiments. While the alternative scoring objectives continue to lie in between the best and worst case, we cannot fully exploit the results of Theorem~\ref{superpositionTheorem} as the allocation space is now discrete. We simply re-solve the BIO-$\lambda$ problem for varying $\lambda$ and use out-of-sample data to identify a preferred $\lambda$. 

\begin{example}~\label{example}
Consider an omnichannel retailer with nodes, zones, fulfillment costs, uncertainty set and the true unknown demand distributions as depicted in Fig.~\ref{fig:example}. Suppose also $L=0$.
\begin{figure}[t]
  \begin{center}
     \includegraphics[scale=0.5]{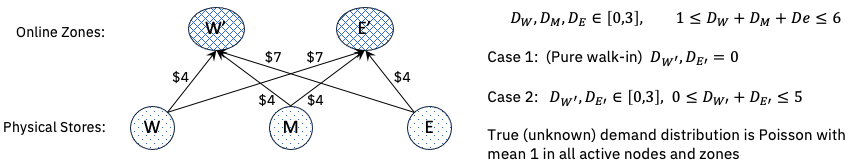}
 \caption{Example of an omnichannel retailer.}
 \label{fig:example}
\end{center}
\end{figure}

In Table~\ref{tab:example1}, we summarize the results (allocations, estimated objectives 
and their corresponding worst case/best-worst case demands scenarios) of the pure walk-in setting described in case 1 of Fig.~\ref{fig:example} when  $p = h = 0$, $b=160$ and $c = \$40$ for the pure RO and BIO-50\% methods. 
Observe how the pure RO model invests in a lot inventory (9 units) to maximally reduce lost sales penalty, while the BIO model invests just 6 units.
Table~\ref{tab:example1} also includes the realized statistics of the objective obtained from a Monte Carlo simulation run on 1000 samples. Note that the estimated worst case and the simulated worst case profitability values do not match as the robust model assumes a finite polyhedral set to capture the most commonly occurring scenarios, as opposed to the unbounded Poisson distribution used by the simulation engine. Observe how the BIO model trades off worst-case performance (7.7\%, 11.1\% at the 5th, 10th quantile) compared to pure RO to improve the average case (21.8\%). 
\begin{table}[t]
\centering
\scalebox{0.7}{
\begin{tabular}{@{}cccccccccccc@{}}
  \toprule[1.0pt]
\multirow{2}{*}{Method}	& Reco. & Est. best-	& Est. Worst & Est. Worst & Est. &Rlze. & Rlze. 5th &	Rlze. 10th	& Rlze. & Rlze.	& Rlze. \\
	& Allocation	& -worst case	& Case	&  Case Obj & Obj	& Min.	& percentile &	percentile	& Median	& Max.	& Mean\\
\midrule[0.5pt]
Pure RO	& [3,3,3] & -	&	[3,3,0]	&-360 & -360	&-1000	&-520	&-360&	-360 &	-360	&-372.32\\
BIO-50\%	&[2,2,2]	& [0,0,1]-[3,3,0] &	[3,3,0]	&-560& -240	& -1040 &	-560	& -400	& -240	& -240	&-291.04\\ \bottomrule[1.0pt]
\end{tabular}}
\caption{Example of the recommend allocation and profit statistics of pure RO and BIO in the walk-in only setting. Allocation (alternatively, demands) are specified for nodes in the following sequence [M,W,E]. Expansion of the  acronyms used in the table include Reco. for Recommended, Est. for Estimated and Rlze. for Realized. } 
\label{tab:example1}
\end{table}
\begin{figure}[h]
  \begin{center}
     \includegraphics[scale=0.4]{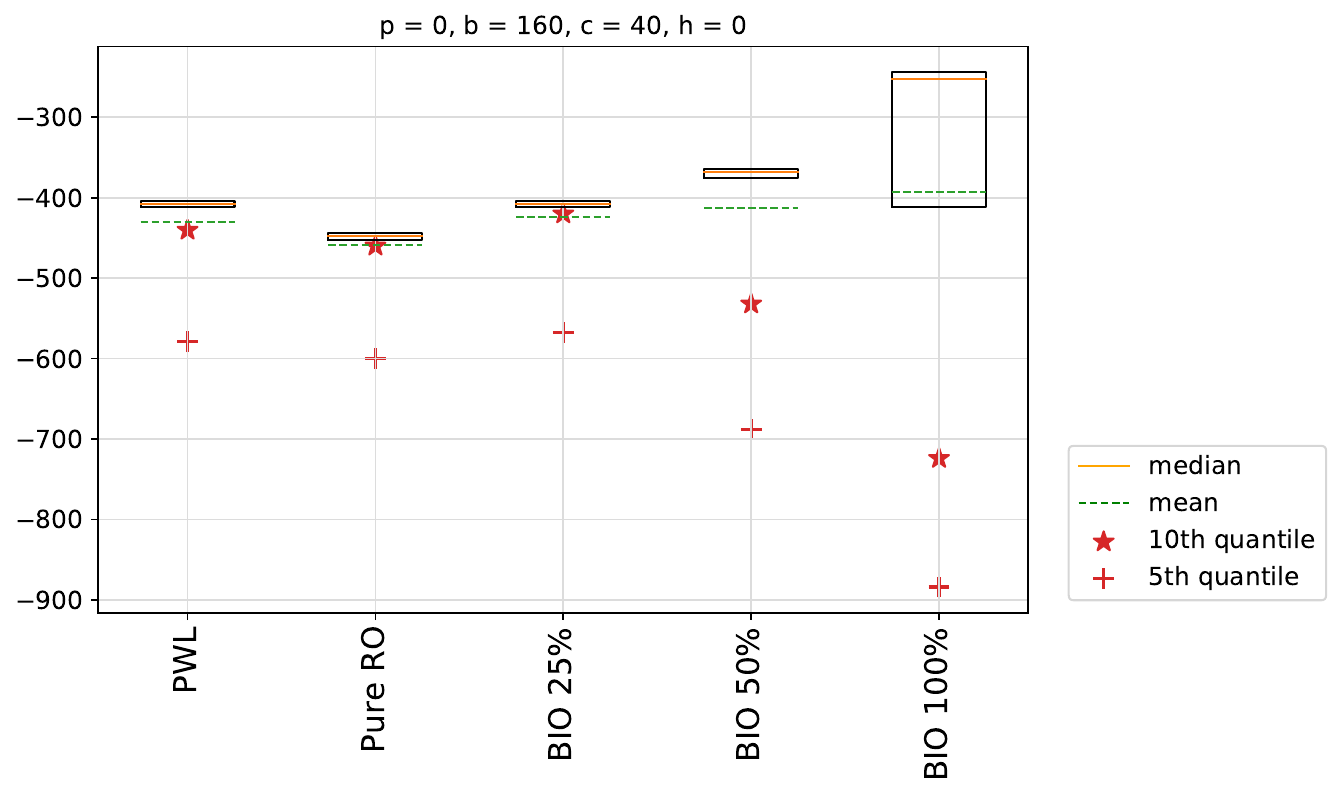}
    \includegraphics[trim = 0cm 0cm 5cm 0cm, clip,
     scale = 0.4]{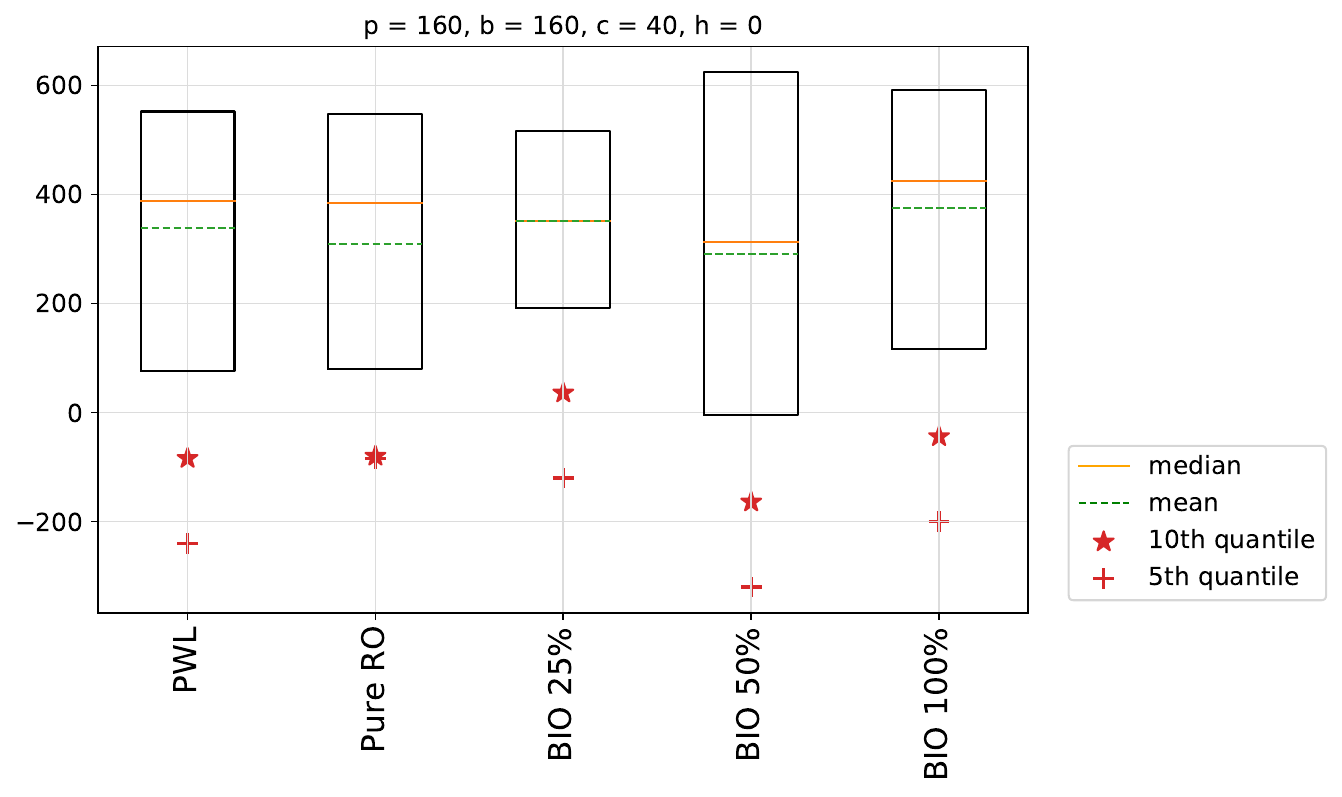}
 \caption{The realized profitability box plots across different methods in case 2 of example 1.}
 \label{fig:boxplot_example2}
\end{center}
\end{figure}
\begin{table}[h]
\centering
\scalebox{0.67}{
\begin{tabular}{@{}lcccccccccccc@{}}
  \toprule[1.0pt]
Setting	& &	\multicolumn{5}{c}{ p = 0, b = 160, c = 40, h = 0}										& &	\multicolumn{4}{c}{p = 160, b = 160, c = 40, h = 0}									\\ \cmidrule[0.5pt]{3-7} \cmidrule[0.5pt]{9-13} 
Method	& &	PWL	&	
Pure RO	&	BIO 25\%	&	BIO 50\%	&	BIO 100\%	& &	PWL	&	
Pure RO	&	BIO 25\%	&	BIO 50\%	&	BIO 100\%	\\ \cmidrule[0.5pt]{3-7} \cmidrule[0.5pt]{9-13}
Reco. Allocation	& &	[2,4,4]	&	
[5,3,3]	&	[4,3,3]	&	[5,2,2]	&	[3,1,2]	& &	[2,4,4]	&	
[2,2,2]	&	[2,3,2]	&	[4,3,1]	 & [3,3,3]\\
\multirow{2}{*}{Est. Best-Worse Case} & &	\multirow{2}{*}{-}	&	
\multirow{2}{*}{-}	&	[0,.5,.5]-	&	[0,1,0]-	&	[0,1,0]- 	& &	\multirow{2}{*}{-}	&	
\multirow{2}{*}{-}	&	[2,3,1]- 	&	[3,3,0]- 	& [3,3,0]-\\
	& &		
 &		&	[3,3,0][3,2]	&	[3,3,0][3,2]	&	[1,1,1][3,2]	& &		&		
 &	 [0,1,0][0,0]	&	 [0,0,1][0,0]	& [1,1,1][0,0]\\
Est. Worst Case	& &	[3,3,0][3,2]	&	
[3,3,0][3,2]	&	[3,3,0][3,2]	&	[0,3,3][3,2]	&	[0,3,3][3,2]	& &	[3,0,0][0,0]	
&	[3,0,3][3,2]	&	[0,1,0][0,0]	&	[0,0,3][0,0] & [0,1,0][0,0]	\\
Est. Worst Case Obj	& &	-586	
&	-463	&	-579	&	-700	&	-1052	& &	-240	&	
-88	&	-120	&	-480	& -200\\
Rlze. Minimum	& &	-1551	&	
-1420	&	-1540	&	-1652	&	-2008	& &	-400	&	
-1052	&	-776	&	-640	& -360\\
Rlze. 5th percentile	& &	-579	&	
-600	&	-568	&	-688	&	-884	& &	-240	&	
-84	&	-120	&	-320	& -200\\
Rlze. 10th percentile	& &	-547	&	
-460	&	-420	&	-532	&	-724	& &	-84	&	
-80	&	36	&	-164	& -44 \\
Rlze. Mean	& &	-430	&	
-459	&	-423	&	-413	&	-393	& &	338	&	
309	&	351	&	290	 & 374\\
Rlze. Median	& &	-408	&	
-448	&	-408	&	-368	&	-252	& &	388	&	
385	&	352	&	312	& 424\\  \bottomrule[1.0pt]
\end{tabular}}
\caption{The recommend allocation and profit statistics across different methods in case 2 of example 1. Allocation/demands are specified for nodes/and zones in the sequence [M,W,E]/[M,W,E][W, E] respectively.} 
\label{tab:example2}
\end{table}

In Table~\ref{tab:example2}, we provide results for a more general scenario with e-commerce demands, case 2 in Fig.~\ref{fig:example}, when (1) $p = 0$, $b=160$ and (2) $p = b = 160$, where $c= 40$ and  $h = 0$. Besides the pure RO, BIO-$\lambda$ for a few different values of $\lambda$, we also include a baseline method, PWL which is a deterministic piece-wise linear network model described in Section~\ref{baselines}. 
For ease of visualization, in Fig.~\ref{fig:boxplot_example2} we plot the realized profitability from the Monte Carlo simulation. We observe that a BIO-25\% improves the mean profitability and the robustness by 7.8\% and 5.3\% (at 5th quantile) respectively over pure RO when $p=0$ and trades off profitability against robustness (improves former by 13.5\% and drops latter by 42.8\% but changes direction and improves by 45\% at the 10th quantile) when $p=160$. In both these examples, we observe that the PWL baseline is competitive and outperforms pure RO across many metrics (5th, 10th quantiles and average case metrics) while BIO 25\% outperforms PWL and RO across all metrics. We also observed this behavior in real client data and is one of the findings that motivated the development of a BIO model. 
These results suggest that the benefit of optimism can increase with higher $p$ when the other parameters remain the same. The non-convex behavior in the mean profitability between BIO-50\% and BIO-100\% when p=$160$ is attributed to the discreteness constraint on allocation. 
\end{example}

\section{Column and Cut Generation (CCG) Method to solve BIO-$\lambda$}\label{BendersSec}
We reformulate the inner min-max problem of BIO-$\lambda$ as a minimization problem by taking the dual of the innermost maximization problem: 
\begin{align}
\min_{{\bf {D}^-} \in U
, \alpha, \beta \geq
  0, \gamma}
\ \nonumber & \sum_{t \in \ca{T}, l \in \Lambda} (1-\lambda)(\alpha_{tl}-b^b_{tl})
                  {D}_{tl}^{b-} + \sum_{t \in \ca{T}, z \in Z}
                  (\beta_{tz} - b^o_t) {D}_{tz}^{o-} + \sum_{l \in
                  \Lambda} \gamma_{0l}\bar{I}_{l}^0 \\
 \label{alphasub} & + \sum_{t \in \ca{T},l \in
                  \Lambda} \gamma_{tl} \left(\bar{I}_{l}^{t+1}
    \mathbbm{1}_{t<L_l} + x_{t-L_{l},l} \mathbbm{1}_{t   \geq  L_{l}} 
- s^{b+}_{tl}\right)\\
  \textrm{subject to. \ } \ & 
  \alpha_{tl} + \gamma_{tl} \geq p^b_{tl} + b^b_{tl}  && \hspace{-3cm}\forall t \in \ca{T}, l \in
                                               \Lambda\\
 \label{betasub}  &  \beta_{tz} + \gamma_{tl} \geq p^o_t + b^o_{t}- c_{lz}  &&\hspace{-3cm} \forall t \in \ca{T}, l \in
                                            \Lambda, z \in Z\\
 \label{gammasub}                 & \gamma_{tl}-\gamma_{t+1,l} \geq -h_l && \hspace{-3cm}\forall t \in \ca{T}, l
                                                          \in
                                                          \Lambda\\
 \label{gammasubzero}  & \gamma_{T,l} = 0 && \hspace{-3cm} \forall l \in \Lambda               \end{align}

Observe that both ${\bf x}$ and the optimistic walk-in sales variable $s^{b+}_{tl}$ are passed on to this problem, which we henceforth refer to as the sub-problem (SP). The variables $\alpha, \beta$ and $\gamma$ are the duals to the constraints in the primal problem (the innermost fulfillment problem). 
The inner problem in the above model is nonlinear and NP-Hard due to the bi-linear terms $\alpha_{tl}{D}_{tl}^{b+} $, $\beta_{tz} {D}_{tz}^o$. \textcolor{\highlightcolor}{We aim to leverage the column and cut generation (CCG) algorithmic framework to solve two-stage RO problems proposed by~\citet{zeng2013solving}. The CCG algorithm guarantees optimal convergence if the subproblem is solved exactly. To achieve the latter, we}
propose an exact mixed-integer linear reformulation that relies on the proposition below and 
the fact that there exists an optimal (worst-case) sub-problem demand ${\bf D}^-$ which is at an extreme point of the uncertainty set $U$ (given the dual variables $\alpha, \beta, \gamma$, the problem is an LP with ${\bf D}^-$ as the variables). 
\begin{proposition}~\label{Integral_ExtremePoints}
 The extreme points of the uncertainty set $U$ described in Eq.(\ref{eqn:uncertainty_set}) are integral with integral bounds. 
\end{proposition}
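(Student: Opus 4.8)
The plan is to exploit the fact that the uncertainty set $U$ is a Cartesian product over the channels $m \in \{o,b\}$ and periods $t \in \ca{T}$, since none of the constraints couple different channel--period pairs. It therefore suffices to prove integrality of the extreme points for a single such block and then invoke the standard fact that the extreme points of a product polytope are exactly the products of extreme points of the factors. Fix one block: the variables are the demands $D_1,\dots,D_n$ at the $n$ locations (nodes or zones), subject to the box constraints $\ell_i \le D_i \le u_i$ and a single budget pair $L \le \sum_i D_i \le U$, where all bounds $\ell_i, u_i, L, U$ are integers by hypothesis.

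First I would characterize an arbitrary extreme point $D^{\ast}$ of this block by a system of $n$ linearly independent tight (active) constraints. These come in two flavors: box constraints, which pin a coordinate to an integral bound ($D_i = \ell_i$ or $D_i = u_i$), and budget constraints ($\sum_i D_i = L$ or $\sum_i D_i = U$). A useful preliminary observation is that the two budget constraints cannot both be tight unless $L = U$ (in which case they coincide), so at an extreme point at most one budget row is active.

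Next I split into two cases. If no budget constraint is tight, then all $n$ tight constraints are box constraints on distinct coordinates (distinctness being forced by linear independence), so every $D^{\ast}_i$ equals an integral bound and $D^{\ast}$ is integral. If exactly one budget constraint is tight, say $\sum_i D_i = B$ with $B \in \{L,U\}$ an integer, then the remaining $n-1$ tight constraints are box constraints on a set $S$ of $n-1$ distinct coordinates, leaving a single free coordinate $j \notin S$. The coordinates in $S$ are pinned to integral bounds, and the free coordinate satisfies $D^{\ast}_j = B - \sum_{i \in S} D^{\ast}_i$, an integer as a difference of integers. In both cases $D^{\ast}$ is integral, and assembling the blocks gives the claim.

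The only point requiring genuine care is the linear-independence bookkeeping in the budget-active case: one must verify that the all-ones budget row together with the $n-1$ unit box rows forms a nonsingular system, so that exactly one coordinate is left free. This is immediate because the $n-1$ unit rows span the coordinate subspace missing position $j$, while the all-ones vector has a nonzero entry in position $j$, ruling out any nontrivial vanishing combination. Equivalently, I would note that the casework can be bypassed entirely: the full coefficient matrix consists of signed unit rows and signed all-ones rows, which is an interval matrix and hence totally unimodular, so integrality of the extreme points follows at once from integrality of the right-hand side. I expect this totally-unimodular observation to be the cleanest one-line route, with the explicit extreme-point argument serving as the transparent self-contained version.
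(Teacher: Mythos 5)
Your proof is correct, but it takes a genuinely different route from the paper's. The paper fixes one channel--period block (using the same product decomposition you use), takes an \emph{arbitrary} linear objective $\sum_l c_l D_l$ over the block, and runs an exchange argument: whenever two coordinates are fractional, it shifts demand between them (leaving the budget row untouched, moving in the objective-improving direction) until one hits an integral box bound, and when a single fractional coordinate remains it rounds it toward the objective-improving integer, with feasibility guaranteed because the integral budget bounds leave room on both sides. This shows every linear objective admits an integral optimum, and hence (since each vertex is the unique optimizer of some cost vector) all extreme points are integral. You instead argue directly on the vertex itself via $n$ linearly independent active constraints, with the case split on whether the budget row is active and the observation that at most one budget row can be tight, and you also offer the totally-unimodular shortcut: the constraint matrix of signed unit rows and signed all-ones rows is an interval matrix, so integrality follows from Hoffman--Kruskal with integral right-hand sides. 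Your active-set argument is the more direct statement about extreme points (the paper's argument is slightly informal on the step from ``integral optimum exists for every $c$'' to ``every vertex is integral''), and your TU observation is the cleanest route, with the added benefit that it extends immediately to richer laminar or nested families of budget constraints---relevant since the paper explicitly contemplates adding further hierarchical budget rows. What the paper's exchange argument buys in return is a constructive rounding procedure that mirrors how a fractional adversarial demand could be repaired inside the CCG sub-problem, keeping the proof elementary and self-contained without invoking polyhedral machinery.
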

The proposition states that 
so long as the bounds in Eq.(\ref{eqn:uncertainty_set}) (which are quantiles) are integral, the extreme points are integral.
Real-world demand realizations are discrete and therefore, the estimated quantiles of demands, and their aggregate values, are integral, making it easier to fulfill the conditions of the proposition. {For general uncertainty sets $U$ whose structure maybe different from Eq.(\ref{eqn:uncertainty_set}), the results of the proposition may not be true. In that case, one may choose to work with (1) a discrete uncertainty set, say $U_{\mathbb{Z}}$, or (2) the convex set generated by the integral points in $U$. Next, we propose an exact mixed-integer linear reformulation by 
\textcolor{\highlightcolor}{ \textcolor{\highlightcolor}{leveraging the discreteness of the optimal adversarial demand. Specifically, we enumerate all integral values within the uncertainty set $U$ which happens to also include the optimal adversarial demand, due to Proposition~\ref{Integral_ExtremePoints}. }}
\begin{proposition}~\label{RLT_subproblem}
The sub-problem can be re-formulated exactly as follows. Here, let $\overline{D}_{tlk}^b$ for $k \in K^b_{tl}$ and $\overline{D}_{tzk}^o$ for $k \in K^o_{tz}$ be the discrete values of the uncertainty that are feasible to the first set of box constraints in Eq. \eqref{eqn:uncertainty_set} in set $U$ and $M = \max_{t,l}\{
p^b_{tl} + b^b_{tl} ,  p^o_t +b^o_t - c_{lz}\} + |T+1|\max_l h_l$.
\begin{align}
 Z^{SP({\bf x}, {\bf s}^{b+})} = \min_{\substack{\alpha, \alpha^*, \beta, \beta^* \geq
  0,\\
  \gamma, w \in \{0,1\}}}
\quad \nonumber & \sum_{\substack{t \in \ca{T}, l \in \Lambda,\\ i \in I^b_{tl}}} 
                  (1- \lambda_t)\overline{D}_{tlk}^b (\alpha^*_{tlk} - b^b_{tl} w^b_{tlk})
                + \sum_{\substack{t \in \ca{T}, z \in Z,\\ i \in I^o_{tz}}}
                  \overline{D}_{tzk}^o  (\beta^*_{tzk} - b^o_{t} w^o_{tzk})
                              \\
  & \hspace{-1cm} + \sum_{l \in
                  \Lambda} \gamma_{0l}\bar{I}^{0}_{l} + \sum_{t \in \ca{T}, l \in
                  \Lambda} \gamma_{tl} \left(\bar{I}_{l}^{t+1}
    \mathbbm{1}_{t<L_l}  + x_{t-L_{l},l} \mathbbm{1}_{t   \geq  L_{l}} 
- s^{b+}_{tl}\right)\\
\textrm{subject to.} \ \quad & \textrm{Eqs.} (\ref{alphasub}),(\ref{betasub}), (\ref{gammasub}),(\ref{gammasubzero})\\
\label{Malphastar}&  \alpha^*_{tlk} \leq M  w^b_{tlk} ; \quad \beta^*_{tlk} \leq M
                                                                    w^o_{tzk} && \hspace{-4cm} \forall t \in \ca{T},l \in
                                            \Lambda, z \in Z,i\\
 \label{alphastar} &  \sum_{k \in K^b_{tl}}  \alpha^*_{tlk}  = \alpha_{tl} ; \quad  \sum_{k \in K^o_{tz}} \beta^*_{tzk}
   = \beta_{tz}  && \hspace{-4cm} \forall t \in \ca{T},l \in
                                            \Lambda, z \in Z\\
\label{wsuperposition} &  \sum_{k \in K^b_{tl}} w^b_{tlk} =1; \quad    \sum_{k \in K^o_{tz}}
  w^o_{tzk} = 1  && \hspace{-4cm} \forall t \in \ca{T},l \in
                                            \Lambda, z \in Z\\
 \label{wonlinebudget}&\bar{\bar{D}}_{t}^{oL} \leq \sum_{ z\in Z, k \in K^o_{tz}}\overline{D}_{tzk}^o
  w^o_{tzk} \leq \bar{\bar{D}}_{t}^{oU} && \hspace{-4cm} \forall t \in \ca{T}\\
\label{wbrickbudget}  &  \bar{\bar{D}}_{t}^{bL} \leq \sum_{ l\in \Lambda, k \in K^b_{tl}}\overline{D}_{tlk}^b
  w^b_{tlk} \leq \bar{\bar{D}}_{t}^{bU} && \hspace{-4cm} \forall t \in \ca{T}
\end{align}
\end{proposition}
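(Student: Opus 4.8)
The plan is to convert the bilinear minimization (SP) given by \eqref{alphasub}--\eqref{gammasubzero} into the stated MILP in three moves: (i) discretize the inner demand $\mathbf{D}^-$ using the extreme-point structure of $U$, (ii) linearize the resulting products of a continuous dual with a binary selector via a big-$M$ construction, and (iii) verify that the stated $M$ is a valid bound so that the reformulation is exact.

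First I would hold the duals $(\alpha,\beta,\gamma)$ fixed and observe that $\mathbf{D}^-$ appears only in the SP objective (constraints \eqref{alphasub}--\eqref{gammasubzero} do not involve $\mathbf{D}^-$), so with the duals fixed the SP reduces to a linear program in $\mathbf{D}^-$ over the polytope $U$. Hence some optimal $\mathbf{D}^-$ is attained at an extreme point of $U$, and by Proposition~\ref{Integral_ExtremePoints} that point is integral; substituting it back leaves the full objective unchanged, so there is a global SP optimum with $\mathbf{D}^-$ integral. This justifies, without loss of optimality, restricting each $D^{b-}_{tl}$ (resp. $D^{o-}_{tz}$) to the finite set of integral values $\{\overline{D}^b_{tlk}\}_{k\in K^b_{tl}}$ (resp. $\{\overline{D}^o_{tzk}\}_{k\in K^o_{tz}}$) feasible to the box part of \eqref{eqn:uncertainty_set}. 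I would encode this selection with binary variables $w^b_{tlk},w^o_{tzk}$ and the selection constraint \eqref{wsuperposition}, so that $D^{b-}_{tl}=\sum_{k}\overline{D}^b_{tlk}w^b_{tlk}$ picks out exactly one value; the aggregate (budget) rows of $U$ then become the linear constraints \eqref{wonlinebudget}--\eqref{wbrickbudget} on the selected values.

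Next I would linearize the objective. Substituting the discretized demand turns the offending term $\alpha_{tl}D^{b-}_{tl}$ into $\sum_k \overline{D}^b_{tlk}(\alpha_{tl}w^b_{tlk})$, a sum of products of the continuous $\alpha_{tl}$ with the binary $w^b_{tlk}$. Introducing $\alpha^*_{tlk}:=\alpha_{tl}w^b_{tlk}$ (and likewise $\beta^*_{tzk}:=\beta_{tz}w^o_{tzk}$), the standard product-of-binary-and-continuous linearization is exactly \eqref{Malphastar} together with \eqref{alphastar}: when $w^b_{tlk}=0$, \eqref{Malphastar} forces $\alpha^*_{tlk}=0$, while \eqref{wsuperposition} and \eqref{alphastar} force $\alpha^*_{tlk}=\alpha_{tl}$ on the single selected index, reproducing $\alpha_{tl}D^{b-}_{tl}=\sum_k\overline{D}^b_{tlk}\alpha^*_{tlk}$; the remaining $b^b_{tl}D^{b-}_{tl}$ contribution is already linear and appears as the $-b^b_{tl}w^b_{tlk}$ piece. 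Collecting terms yields the objective and constraints \eqref{Malphastar}--\eqref{wbrickbudget}, with \eqref{alphasub}--\eqref{gammasubzero} carried over unchanged.

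The step I expect to be the main obstacle is establishing that the fixed constant $M$ is a genuine upper bound on the dual variables at some optimum, which is what makes \eqref{Malphastar} tight rather than spuriously restrictive. I would obtain this from the dual structure: the terminal condition \eqref{gammasubzero} and the recursion \eqref{gammasub} give $\gamma_{tl}\ge \gamma_{t+1,l}-h_l\ge\cdots\ge -(T-t)\max_l h_l$, a lower bound on $\gamma$. Because the SP minimizes an objective in which $\alpha_{tl}$ and $\beta_{tz}$ enter with nonnegative (demand) coefficients, an optimal solution sets them at the lower envelopes $\alpha_{tl}=\max\{0,\,p^b_{tl}+b^b_{tl}-\gamma_{tl}\}$ and $\beta_{tz}=\max\{0,\,\max_l(p^o_t+b^o_t-c_{lz}-\gamma_{tl})\}$ permitted by \eqref{alphasub}--\eqref{betasub}. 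Combining the lower bound on $\gamma$ with these expressions yields $\alpha_{tl},\beta_{tz}\le \max_{t,l}\{p^b_{tl}+b^b_{tl},\,p^o_t+b^o_t-c_{lz}\}+|T+1|\max_l h_l=M$, validating the big-$M$. Finally I would argue exactness in both directions: the integral extreme-point SP optimum maps to a feasible MILP point of equal cost, and conversely every MILP feasible point induces an integral $\mathbf{D}^-\in U$ (box via \eqref{wsuperposition}, budget via \eqref{wonlinebudget}--\eqref{wbrickbudget}) together with duals satisfying \eqref{alphasub}--\eqref{gammasubzero} whose linearized objective equals the SP objective, so the two optimal values coincide.
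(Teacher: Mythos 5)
Your proposal is correct and follows essentially the same route as the paper's proof: restrict the adversarial demand to the integral extreme points guaranteed by Proposition~\ref{Integral_ExtremePoints}, encode the selected values with the binaries $w$ subject to the selection constraint \eqref{wsuperposition} and the budget rows \eqref{wonlinebudget}--\eqref{wbrickbudget}, and linearize the bilinear products $\alpha_{tl}w^b_{tlk}$ and $\beta_{tz}w^o_{tzk}$ via the RLT/big-$M$ constraints \eqref{Malphastar}--\eqref{alphastar}. Your explicit validation of $M$ (lower-bounding $\gamma$ through \eqref{gammasub}--\eqref{gammasubzero} and observing that a minimizer sits on the lower envelopes of $\alpha$ and $\beta$) is precisely the reasoning the paper leaves implicit in its closing remark on the tightened values $M_{tl}$ and $M_{tz}$.
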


Note that $M$ can be tightened by having it tailored by time period and node or zone in an actual implementation (see appendix). 
Our method capitalizes on the discrete nature of the optimal uncertainties and employs the Relaxation and Linearization Technique (RLT, \citealt{sherali1998reformulation}) to \textcolor{\highlightcolor}{exactly} re-formulate the sub-problem as an MIP. This contrasts with  traditional methods such as those in \citet{zeng2013solving,bertsimas2018scalable}, which linearize the KKT conditions under continuous uncertainty assumptions using generic big-M parameters to derive MIP reformulations, or the approach in \citet{simchi2019constraint}, which leverages integral coefficients for network flow problems to construct MIP reformulations.

\textcolor{\highlightcolor}{Propositions~\ref{Integral_ExtremePoints} and~\ref{RLT_subproblem} together establish that
the sub-problem for the BIO formulations can be solved exactly with MIP solvers despite being NP hard. The CCG algorithm converges to the optimal solution in finite iterations as long as the sub-problem is solved exactly~\citep{zeng2013solving}.} To solve two-stage RO problem, we integrate the MIP sub-problem within the CCG algorithm.  
Here, the master problem is formulated using only a finite set of feasible ${\bf D}^{-}$ uncertainty values, but with a structure identical to the original problem. The solution method can be viewed as the master generating a sequence of beneficial allocations, and the sub-problem adding corresponding adversarial demand `cuts' to master and the iterations stop when the two problems converge in their achieved objectives.
 In the interest of brevity, we include the master problem formulation and the corresponding CCG algorithm's adaptation to the BIO-$\lambda$ problem in Appendix~\ref{CCGmaster}.  

\section{Computational Experiments}~\label{experiments}
In this section, we use real-life data to compare the practical performance of different inventory optimization policies developed in this paper relative to baseline methods, and analyze their impact on total profitability and other metrics. 

We obtained data from a leading omnichannel retailer operating over 150 locations and multiple DCs {(warehouses)} in the continental United States. {The dataset includes historical sales records from transaction-log and e-commerce orderline data, meta-data such as product and location attributes, and rate cards used for calculating the fulfillment costs. }
All locations are SFS equipped, i.e., their store inventory can be used to fulfill online orders. We selected four jackets subcategories in the apparel category during the second sales peak (the first being the holiday season) of the 
calendar year. We analyze 23 SKUs corresponding to the top 5\% best-sellers that account for more than 45\% of the total sales volume during the 3 week testing period.
Fig.~\ref{fig:salestrends} depicts the chain-level sales and the corresponding e-commerce share for these select SKUs over a 3 month peak period. 
Over 60\% of these sales 
occur during the testing period. The spike in walk-in and e-commerce sales (and the e-commerce share) during this peak  underscores the need for accurate inventory positioning to meet omnichannel demands as it can significantly impact the retailer margins for the entire season.

We assume a weekly replenishment cycle and a lead time of one-week. 
\begin{figure}[t]
  \begin{center}
     \includegraphics[trim = 1.7cm 1cm 0cm 0cm, clip,scale=0.5]{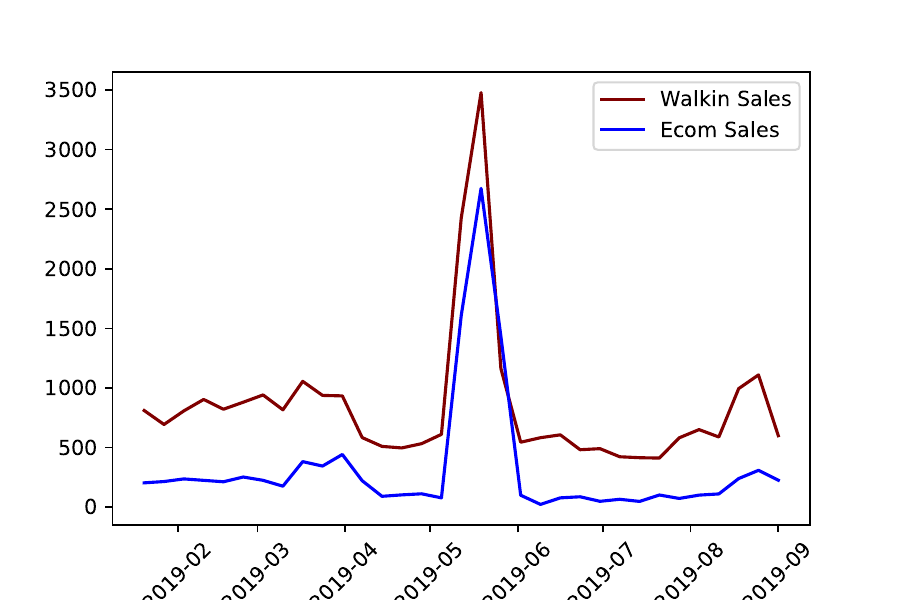}
    \includegraphics[trim = 0cm 1cm 0cm 0cm, clip,
     scale = 0.5]{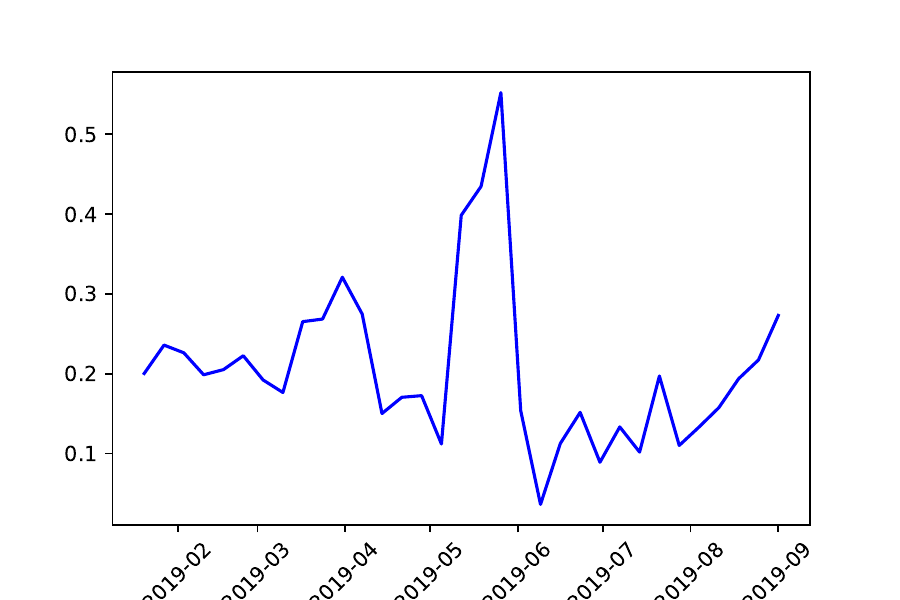}
 \caption{Actual sales trends by channel (left) and the actual e-commerce sales fraction of the total sales (right) for select SKUs around sales peak. 
 }
 \label{fig:salestrends}
\end{center}
\end{figure}

\subsection{Inputs for inventory management}

We cluster the individual zip3 sales across the continental US into 22 zones based on similar node-to-zip3 lowest unit fulfillment costs in order to deal with the low sales rate at the zip3 level. The labor cost by origin node (lower in DCs compared to metropolitan area) are additionally incorporated to obtain the total fulfillment cost. 

We employ a time-series multi-task uncensored demand forecasting model at the SKU-location-channel-week level 
with hierarchical location-based reconciliation to generate a daily refresh of the mean weekly demand forecast for the next few weeks.  Specifically two neural network-based models, 
one for SKU-store-Demand-week forecasts, and the other for SKU-zone-OnlineDemand-week predictions
were developed using {18.7 and 3 million observations respectively
corresponding to daily observations between 2016 to 2019 for SKUs in the chosen sub-categories across locations and zones.} 
Multi-tasking allows the model to learn across multiple time-series based on common attributes (common sub-patterns in hierarchies and corresponding historical sales, prices, and calendar and seasonal attributes) under sparse data conditions and captures correlations automatically. 
Sales data is uncensored within this procedure to account for lost sales due to stock-outs and produce an unconstrained demand forecast. Reconciliation ensures consistency between chain-level and location-specific forecasts, an aspect that is important in our problem context. The forecasting model minimizes a Poisson loss function (along with the regularization and incorporation hierarchical aggregate series) 
keeping in mind the low rate of sales and the discreteness of demand, among other factors. This loss function also enables us to aggregate the local mean demands into a chain-level mean demand. 

To generate the required local and chain level quantiles of the uncertainty set, we use the 5th and the 95th quantile of the Poisson distribution. For Monte Carlo sample paths, we use a random number generator with an underlying Poisson distribution with the estimated mean. To be distribution-free, a quantile loss function at multiple levels of the hierarchy could be alternatively used instead of the Poisson loss function. {We adopt the former approach to be consistent in our modeling and our sample generation procedure as Monte Carlo samples are required for the out-of-sample computational comparisons we perform in this paper.} 

\subsection{Baseline Inventory Optimization Methods}~\sectlabel{baselines}
\textcolor{\highlightcolor}{Three} fast heuristic methods described below are used to generate the baseline KPIs for comparison. The methods rely on location-specific demand quantiles, \textcolor{\highlightcolor}{evaluated using an assumed distribution--Poisson for perfect information and Uniform for imperfect information.} The critical-quantile we use is the location-channel specific ratio of the margin 
(price less the item cost, and less the average shipping cost for the ecommerce channel)
to the price of the item. 

\begin{henumerate}
\item {\em Basestock-based distribution heuristic:} We use the basestock heuristic to order supply at each store location for walk-in demand and next at the chain level for online demand. In the latter case, we consider the warehouse inventory as well as excess inventory in stores (i.e., that above critical-quantile store demand) to derive the current inventory position.  The chain level e-commerce supply is distributed amongst warehouses in-proportion to their individual basestock order quantities (where in, demand is obtained by mapping zones to their nearest warehouses).

\item {\em Piecewise-linear (PWL) network-based heuristic: } We extend the deterministic version of the pure robust model to allow for two classes of demand at each location: the mean demand, and the excess beyond the mean and up to the critical quantile (if higher). The price and penalties for the second class are associated with a discount factor that is calibrated to prioritize the satisfaction of all mean demands over any excess.

\item \textcolor{\highlightcolor}{{\em Basestock-based network distribution heuristic:}  This heuristic resembles the basestock heuristic described above but differs in key aspects. Instead of pooling online orders at the chain level, it pools demand network-wide, ordering only the incremental quantity beyond walk-in orders for online demand. The additional supply is then split, with a fraction $\theta$ allocated to stores and the rest to warehouses. Within each group, the distribution across nodes is proportional to their respective base-stock order quantities, where DC demand is obtained by mapping zones to their nearest warehouses. The incremental order quantity for online demand is capped so it does not exceed what would result from pooling at the chain level}\footnote{\textcolor{\highlightcolor}{This restriction is necessary because, for low-sales-rate items, pooling at the network level across thousands of stores could generate excessive incremental demand—mainly for walk-ins that were not driven by store-level orders—resulting in unsold inventory and subpar performance.}}. 
\end{henumerate} 
\textcolor{\highlightcolor}{For the basestock heuristics, we use a capped variation that limits the order quantity to the cycle safe stock level, as it known to outperform the vanilla variant in lost sales settings~\citep{xin2021understanding}. This observation holds across all our experiments, so we adopt it as the default.} 
The heuristic methods do not rigorously model demand uncertainty but instead incorporate its key elements in a practical manner, serving as competitive benchmarks for the real-data sets analyzed (as we will see). \textcolor{\highlightcolor}{Unlike the RO and BIO models, these heuristics do not guarantee feasible solutions in the presence of side-constraints, often required in practical applications, such discussed in Section \ref{model}.}

\subsection{Experimental design and setup}
We perform two different computational experiments under various initial conditions and parameter settings to assess the performance of the inventory management policies and their suitability for real-world applications. The first experiment involves a batch Monte-Carlo simulation, where the optimizer is executed only once to evaluate its performance in isolation across various scenarios. 
The second experiment is a business value assessment over an extended time horizon using 
a transaction-level Monte Carlo simulation. Here, the inventory optimizers were executed on a weekly basis, and demands were fulfilled order-by-order on-the-fly. This simulation mimics a real-life order fulfillment system.

A two-week planning horizon is used for all the inventory optimizers in both experiments. As we solve several large instances of the two-stage inventory optimization model which involve NP-hard sub-problems, we employ a heuristic version of Algorithm~\ref{alg:pureRO_Benders}, by employing an {\em alternating heuristic} (AH) to warm start and polish the sub-problem solution along with practical safeguards 
to limit the maximum run time (300 seconds) and the number of Benders iterations (20). 
The AH iteratively switches between solving for the duals with fixed demands and then for demand by fixing the dual values at their new values until successive dual solutions converge.

\subsubsection{Batch Monte-Carlo simulations with optimizer executed just once:} 
The optimizers are executed for a peak sales week for different hyper-parameter settings and starting inventory positions. We evaluate the resultant allocations of each optimizer by solving the innermost maximization problem described in (\ref{staticRobustFormulation})  and generate a profitability distribution of the achieved objective 
using the Monte-Carlo samples.

In the first experiment, we analyze the sensitivity of the profitability gain to the parameter settings by evaluating the gain at three different parameter settings where (1) $p_t=0, b_t=p_{0t}$, (2) $p_t=p_{0t}, b_t=p_{0t}$ and (3) $p_t=p_{0t}, b_t=0$, where $p_{0t}$ is the price of the item in week $t$. For these SKUs, the margin is on average 50\% at full price. The SKU prices drop by 20\% between the lead time and cycle time week. We assume that $h = 0$. 
For each of these parameter settings, we consider three different starting inventory positions to recreate different network conditions: (1) zero - all locations have zero inventory; (2) mean - all locations have inventory equal to mean of the lead time demand including DCs that have the preferred zones mapped to them; 
and (3) excess - DCs have excess inventory by SKU, and stores have inventory that is sufficient at the category level but is imbalanced across SKUs and locations (inventory equal to total mean walk-in demand is randomly allocated across SKUs and stores). 
\textcolor{\highlightcolor}{We implement the aforementioned baseline methods—Basestock, PWL, and BS Net$_{\theta}$—under perfect distributional information (i.e., Poisson) and apply the  network heuristic under imperfect distributional information (i.e., Uniform), denoted as BSU Net$_{\theta}$. We evaluate $\theta= 0$ and $\theta=1$ to analyze the impact of network pooling and the incremental value of maintaining this inventory at stores\footnote{\textcolor{\highlightcolor}{Since $\theta=1$ consistently gave near-optimal results when tuned, we present only this case.}}. Additionally, we evaluate distribution-free}
BIO models for different $\lambda$ values (denoted by BIO 5\%, BIO 10\%, BIO 25\%, BIO 50\% and BIO 75\%), and BIO-0, i.e., the pure robust method (Pure RO). 

For the BIO model analysis, we split the (1000) Monte Carlo samples into a cross-validation (80\%) and a hold-out test data set (20\%). Doing so enables us to identify the most profitable $BIO-\lambda$ on average at the SKU level via a one-dimensional grid search (across  $\lambda$ in \{5,10,25,50,75\}) using the cross-validation data set and evaluate performance of this optimized $BIO-\lambda$ on the hold-out data. We refer to this ML based BIO model as `BIO-best'. For appropriate comparison, results of all models are reported on the hold-out data only. 

 Fig.~\ref{fig:results_zero_new} 
 provides the total ordering decision 
 per node type and resulting profitability distribution for various settings and methods aggregated over all SKUs. Table~\ref{tab:results_standalone} complements this by reporting the percentage improvement in mean 
 profitability gains compared to the basestock method. \textcolor{\highlightcolor}{The 5th quantile percentage gains closely follow the insights from mean performance gains, also as illustrated in Fig.~\ref{fig:results_zero_new}, so we do not report them separately.} 
 

\begin{figure}[t]
  \begin{center}
   \hspace{1cm} $p=p_{0t}, b= 0$ \hspace{3cm} $p=p_{0t},b=p_{0t}$ \hspace{3cm} $p = 0, b=p_{0t}$\\ 
    \hspace{-4.8cm}  \rotatebox{90}{\hspace{0.9cm}Zero Init. Inv.}
    \includegraphics[trim = 4cm 0cm 3cm 0cm, clip,
     scale = 0.27]{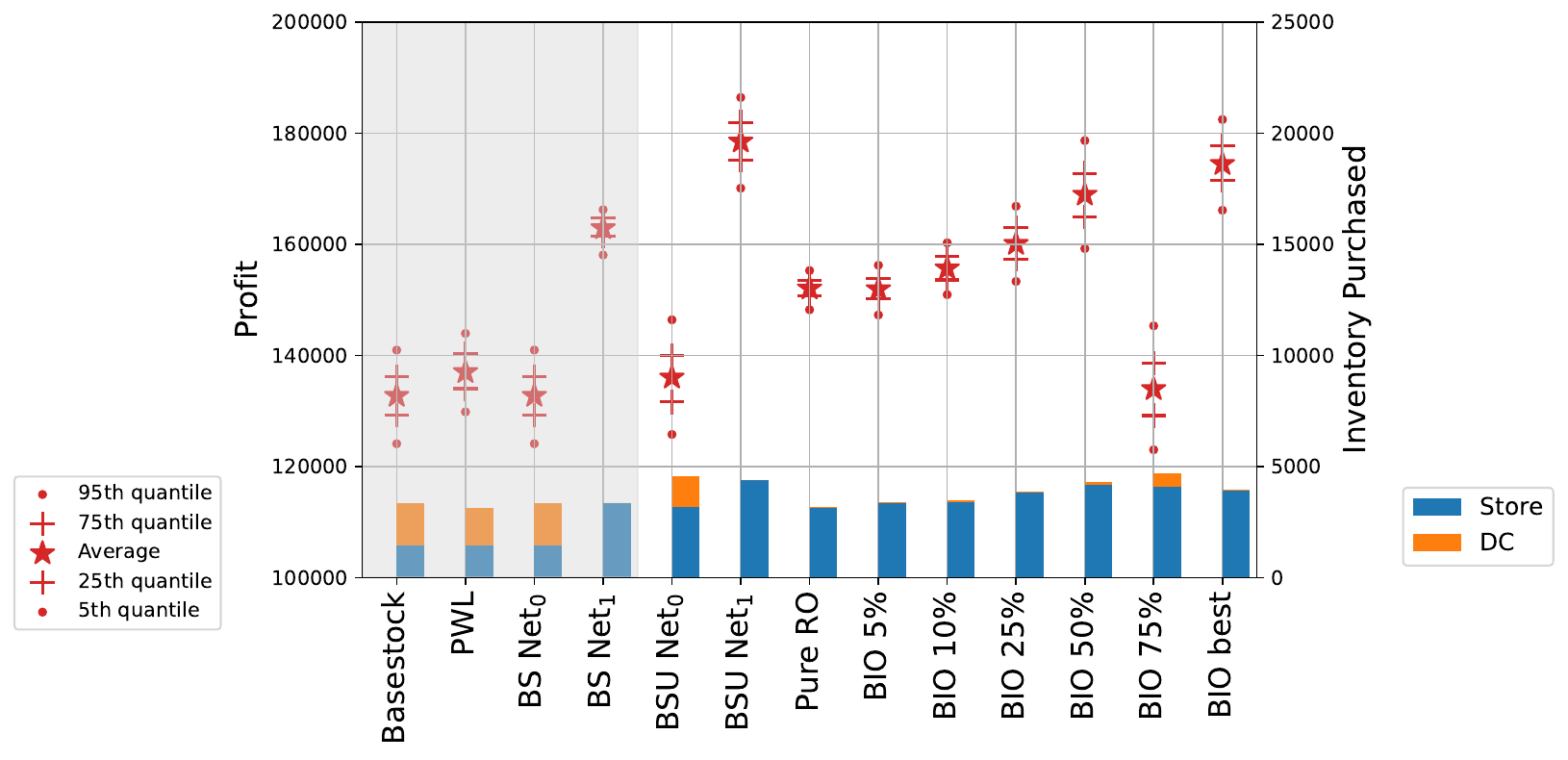}  \hspace{-0.25cm} 
    \includegraphics[trim = 4cm 0cm 3cm 0cm, clip,
     scale = 0.27]{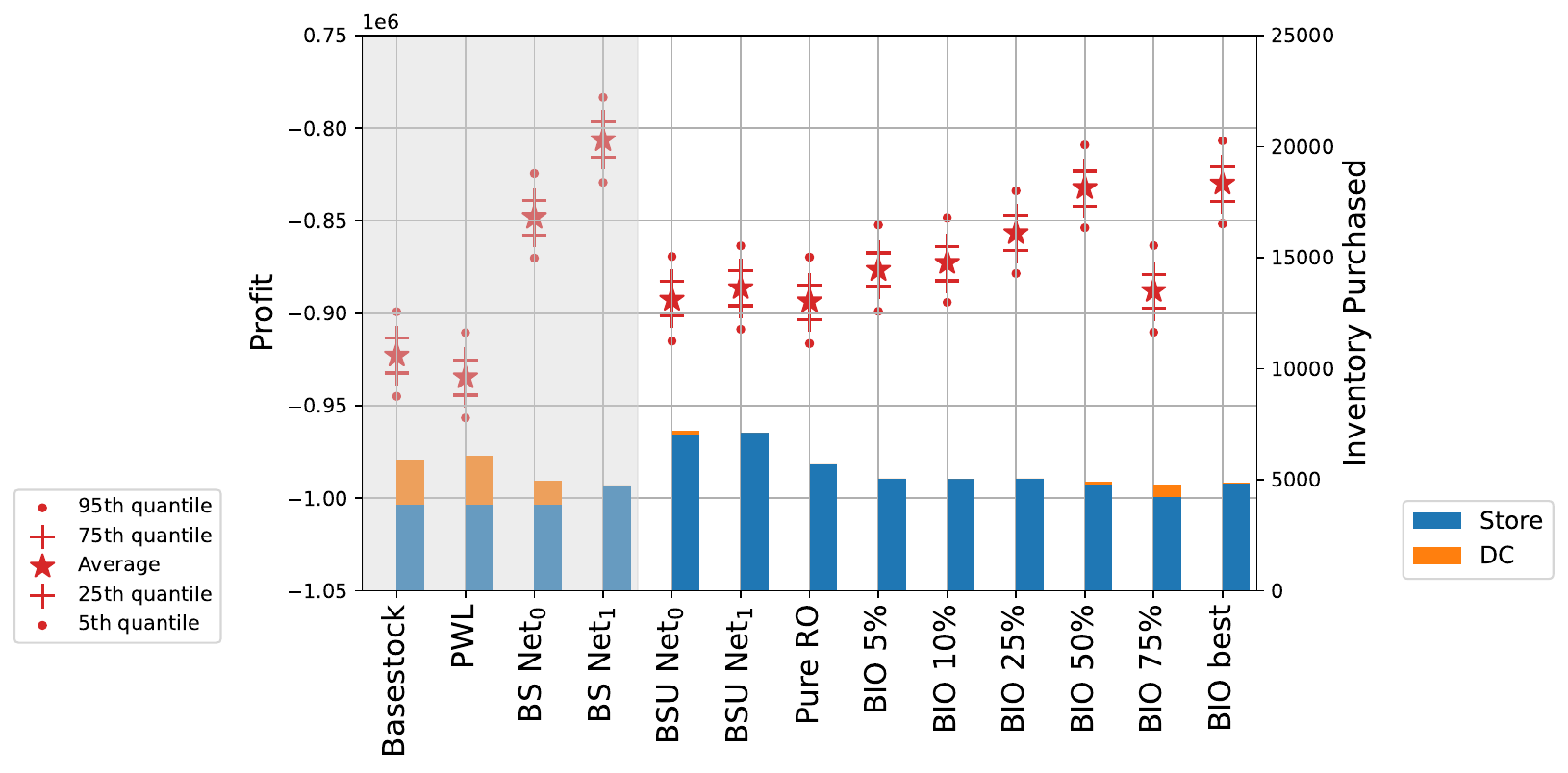}  \hspace{-0.25cm} 
      \includegraphics[trim = 4cm 0cm 3cm 0cm, clip,scale=0.27]{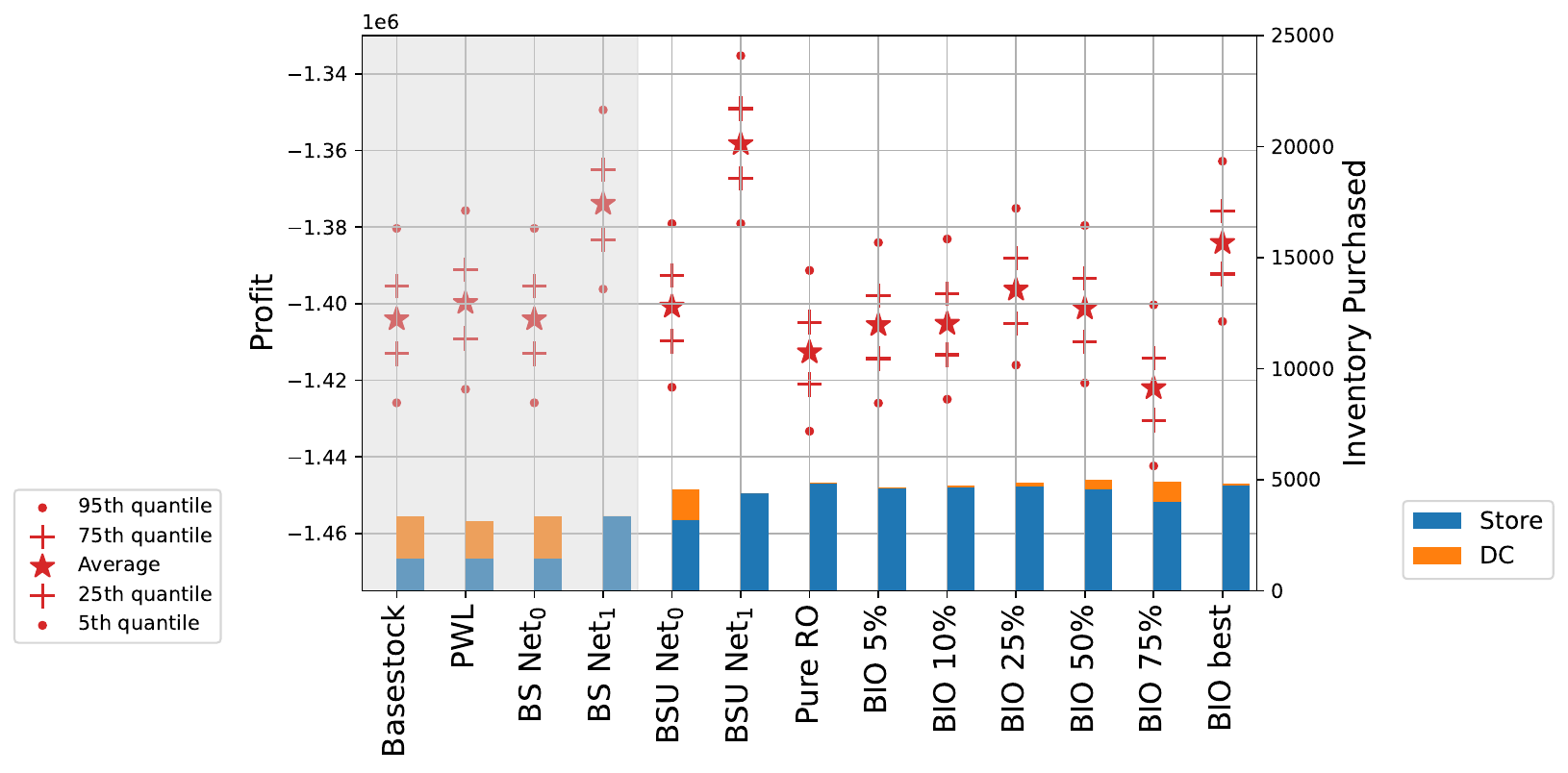}  
            \hspace{-11.9cm}
     \includegraphics[trim = 0.1cm 2.2cm 23cm 8cm, clip,
    scale = 0.27]{Bzero11_new2.pdf}  \hspace{4cm}
        \includegraphics[trim = 24.5cm 2.5cm 0.1cm 8cm, clip,
    scale = 0.27]{Bzero11_new2.pdf}\\ \vspace{-0.6cm}
 \hspace{1cm} (a) \hspace{4.8cm} (b) \hspace{4.8cm} (c)\\ \vspace{0.2cm}
       \hspace{-4.8cm} \rotatebox{90}{\hspace{0.9cm}Mean Init. Inv.}  
       \includegraphics[trim = 4cm 0cm 3cm 0cm, clip,
     scale = 0.27]{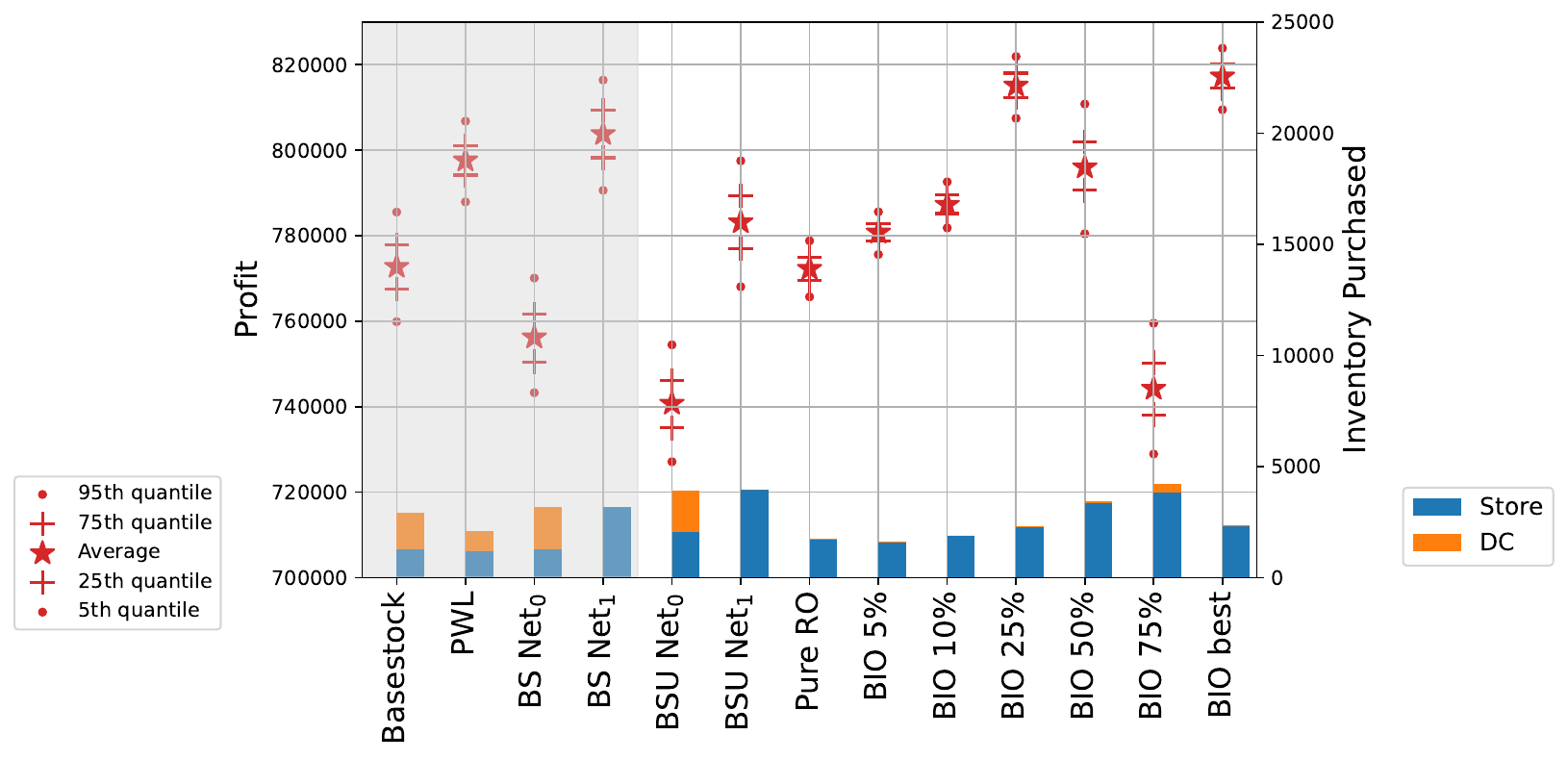} \hspace{-0.25cm} 
    \includegraphics[trim = 4cm 0cm 3cm 0cm, clip,
     scale = 0.27]{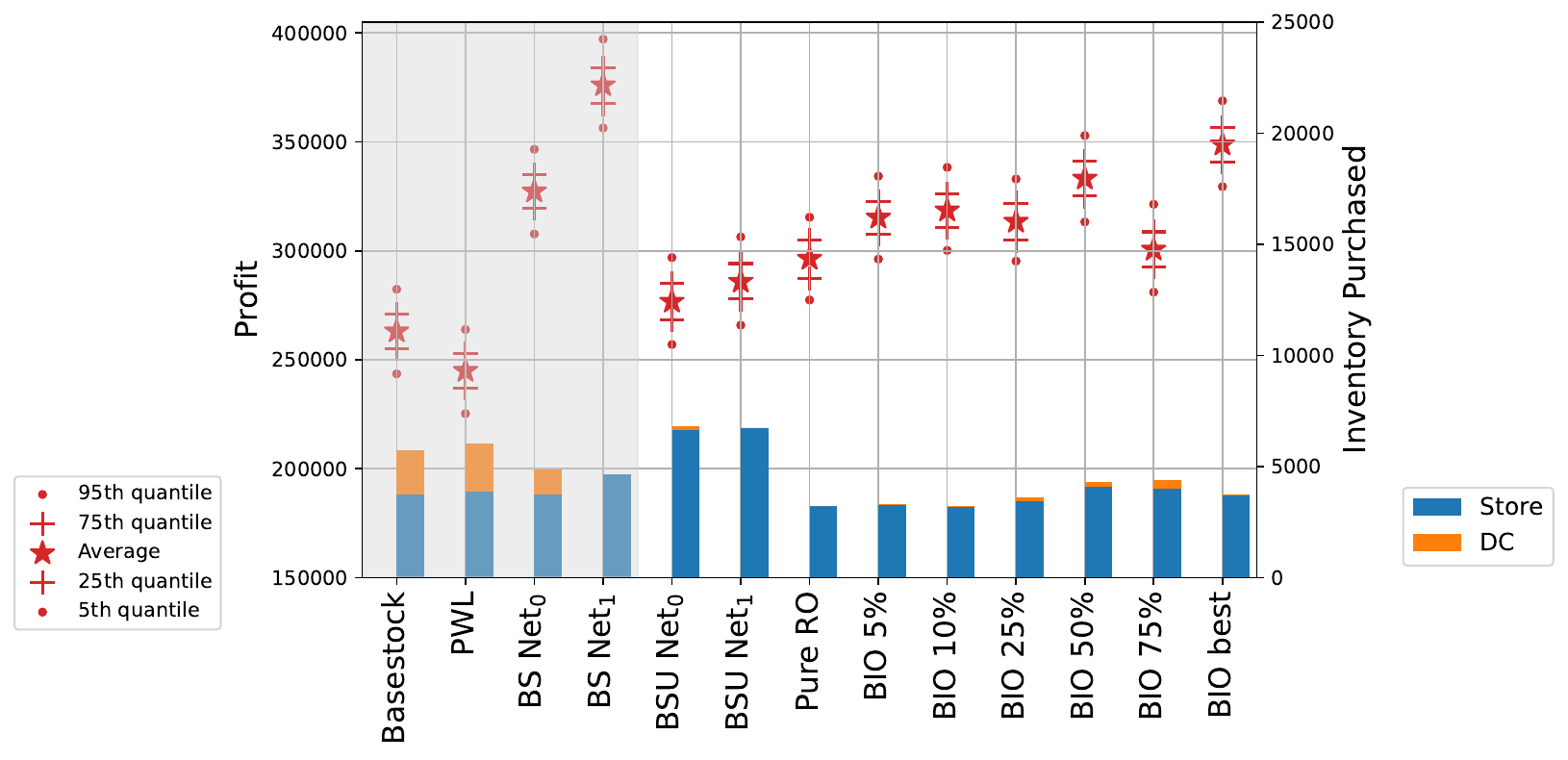} \hspace{-0.25cm} 
 \includegraphics[trim = 4cm 0cm 3cm 0cm, clip,scale=0.27]{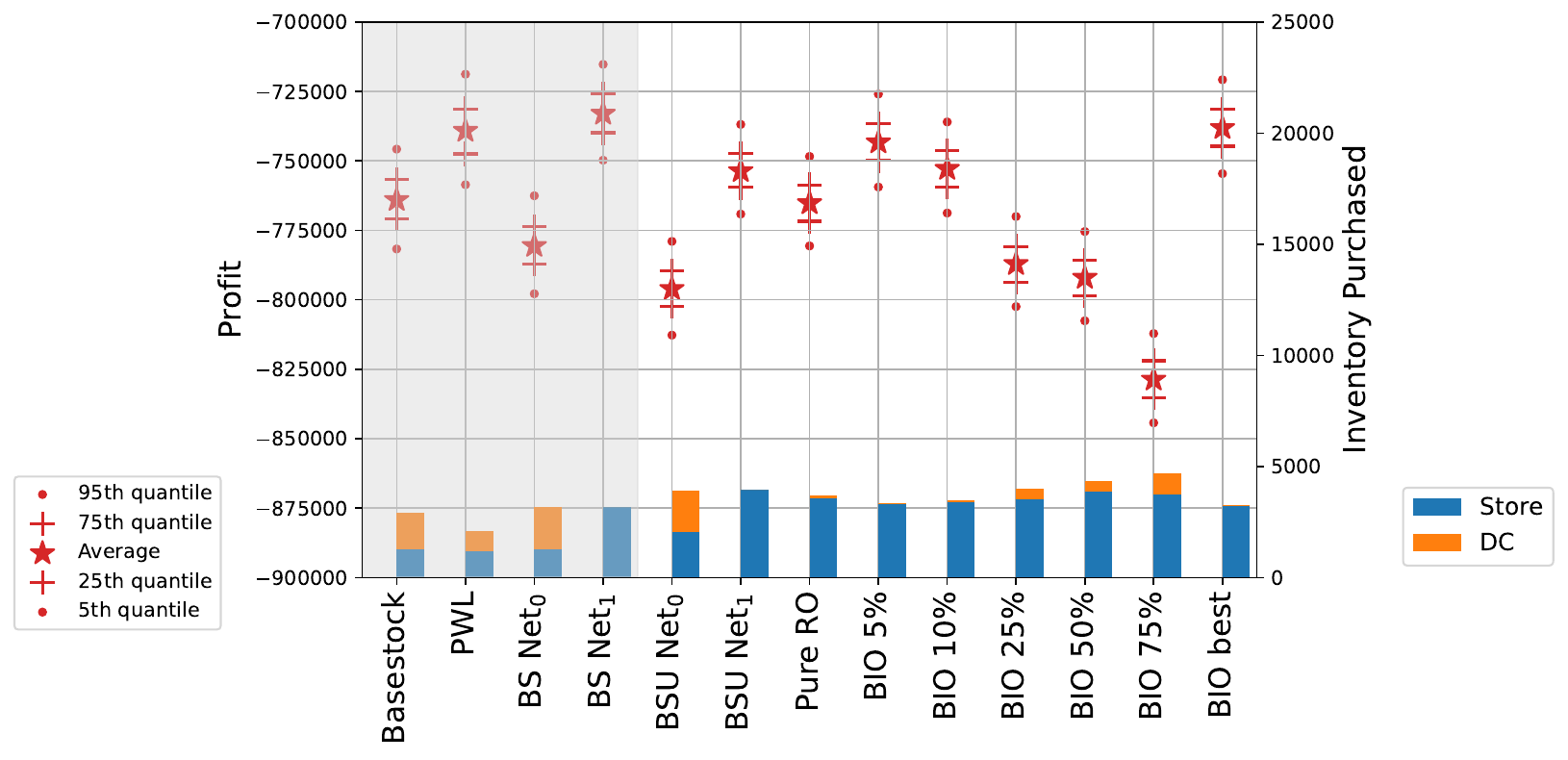}
            \hspace{-11.9cm}  
     \includegraphics[trim = 0.1cm 2.2cm 23cm 8cm, clip,
    scale = 0.27]{Bzero11_new2.pdf}  \hspace{4cm}
        \includegraphics[trim = 24.5cm 2.5cm 0.1cm 8cm, clip,
    scale = 0.27]{Bzero11_new2.pdf}  \\
    \vspace{-0.6cm}
 \hspace{1cm} (d) \hspace{4.8cm} (e) \hspace{4.8cm} (f)\\
  \hspace{-4.8cm} \vspace{0.2cm} \rotatebox{90}{\hspace{0.7cm} Excess Init Inv.}  \includegraphics[trim = 4cm 0cm 3cm 0cm, clip,
     scale = 0.27]{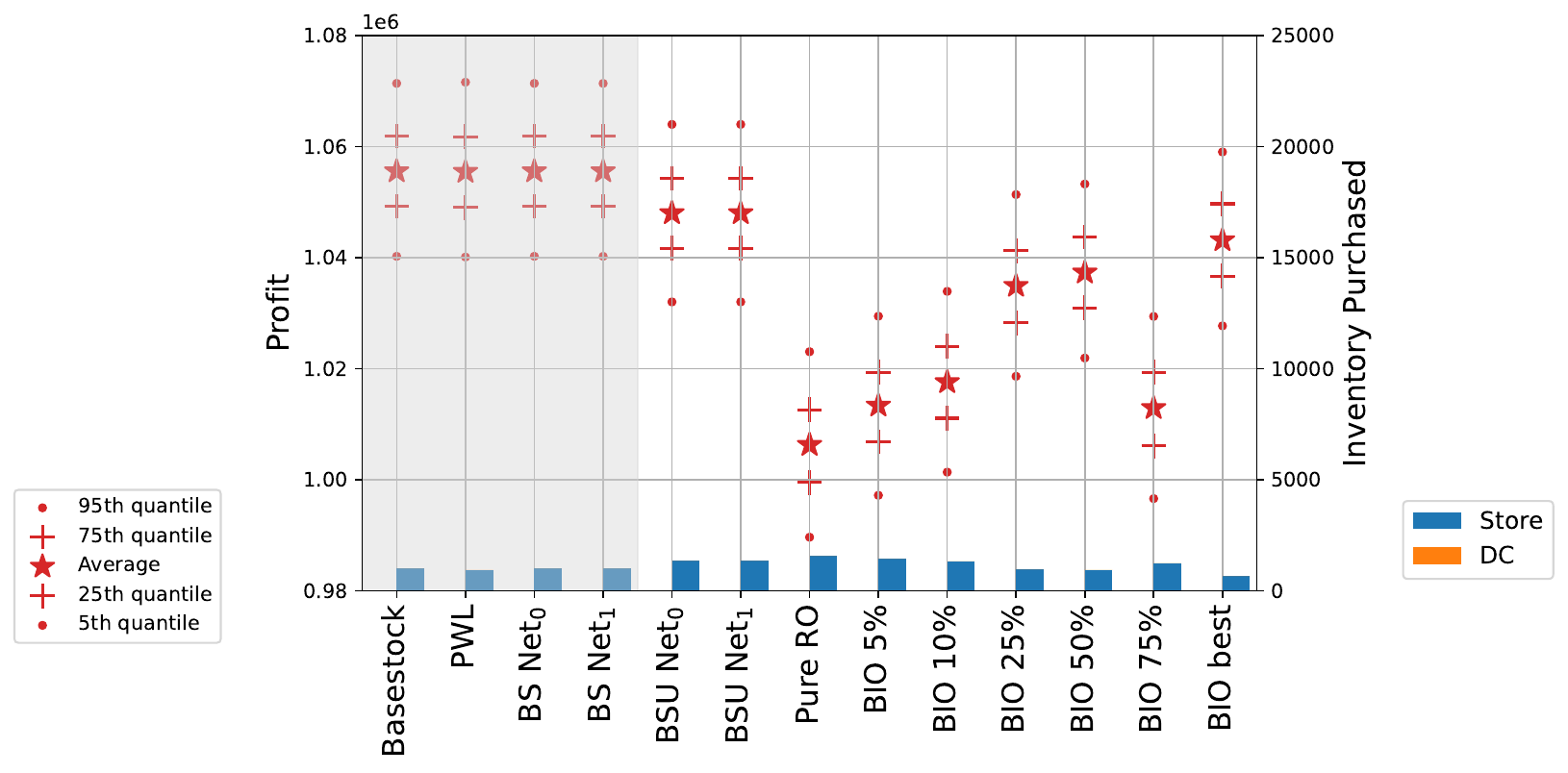} \hspace{-0.25cm} 
    \includegraphics[trim = 4cm 0cm 3cm 0cm, clip,
     scale = 0.27]{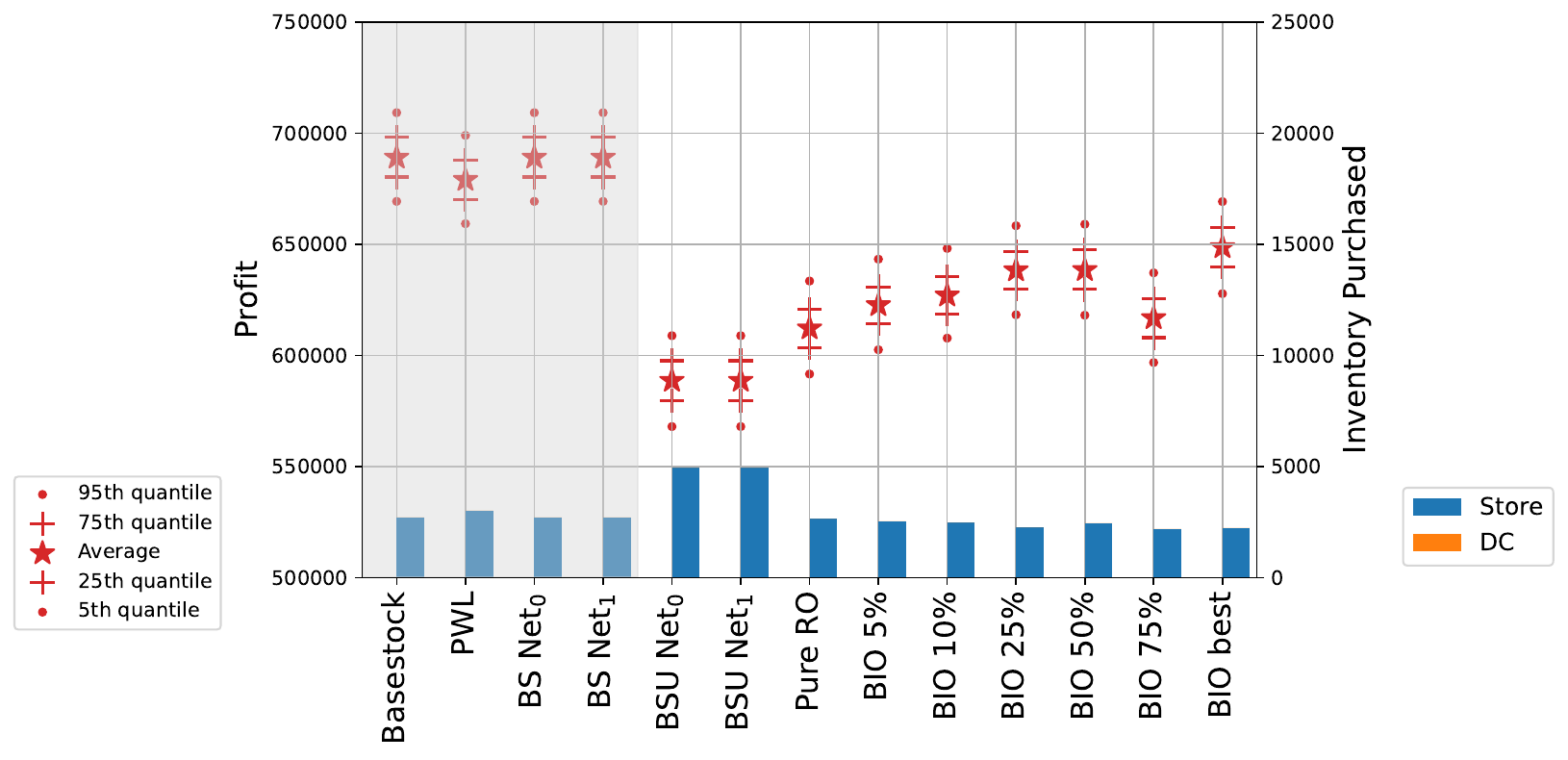} \hspace{-0.25cm} 
 \includegraphics[trim = 4cm 0cm 3cm 0cm, clip,scale=0.27]{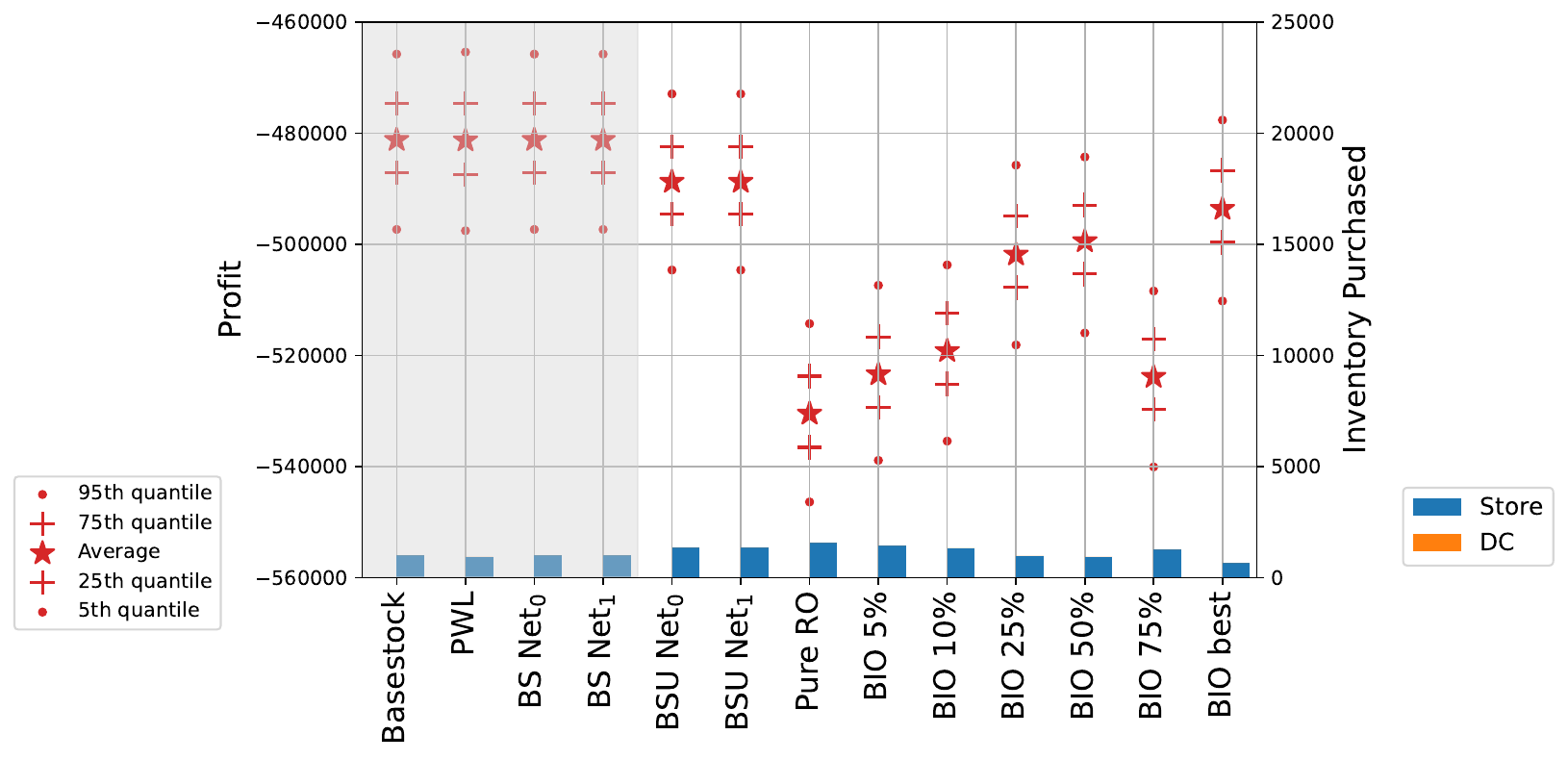} \hspace{-11.9cm}  
     \includegraphics[trim = 0.1cm 2.2cm 23cm 8cm, clip,
    scale = 0.27]{Bzero11_new2.pdf}  \hspace{4cm}
        \includegraphics[trim = 24.5cm 2.5cm 0.1cm 8cm, clip,
    scale = 0.27]{Bzero11_new2.pdf}  \\
    \vspace{-0.6cm}
  \hspace{1cm} (g) \hspace{4.8cm} (h) \hspace{4.8cm} (i)
 \caption{Total inventory allocation across stores and DCs 
 and resultant total profitability distribution in the Monte Carlo simulation. Initial Inventory is zero in (a-c), mean in (d-f), and excess in DC in (g-i). Price and back order penalty multipliers ($p,b$) to the weekly item price are: $p=1, b=0$ in (a,d,g), $p=1, b=1$ in (b,e,h) and $p=0, b=1$ in (c,f,i). Grey-shaded settings: perfect distributional info; others: imperfect or distribution-free. 
 }
 \label{fig:results_zero_new}
\end{center}
\end{figure}
\begin{table}[t]
    \centering
    \scalebox{0.6}{
    \begin{tabular}{@{}ccccccccccccccccc@{}}
     \toprule[1.0pt]
    \multirow{2}{*}{\parbox{2cm}{\centering  Initial Inv. Setting}}	&	\multirow{2}{*}{\parbox{2cm}{\centering Price Factor}}	&	\multirow{2}{*}{\parbox{2cm}{\centering Lost Sales Factor}}	&&	\multicolumn{3}{c}{Perfect Distr. Info.}	&  & \multicolumn{2}{c}{Imperfect Distr. Info.} & \multicolumn{6}{c}{Distribution-free}	
    \\ \cmidrule[0.5pt]{5-7} \cmidrule[0.5pt]{9-10} \cmidrule[0.5pt]{12-17} 
    &&&\phantom{a}
					 &	PWL	&	BS Net$_0$ & BS Net$_1$ && BSU Net$_0$ & BSU Net$_1$ && Pure RO	&	BIO 5\%	&	BIO 10\%	&	BIO 25\%	& BIO 50\% 	&	BIO best	
                     \\ \cmidrule{1-3} \cmidrule[0.5pt]{5-7} \cmidrule[0.5pt]{9-10} \cmidrule[0.5pt]{12-17} 
Zero	&	1	&	0	&&	3	&	0	&	23	&&	2	&	34	&&	15	&	14	&	17	&	21	&	27	&	31	\\
Zero	&	1	&	1	&&	-1	&	8	&	13	&&	3	&	4	&&	3	&	5	&	5	&	7	&	10	&	10	\\
Zero	&	0	&	1	&&	0	&	0	&	2	&&	0	&	3	&&	-1	&	0	&	0	&	1	&	0	&	1	\\
Mean	&	1	&	0	&&	3	&	-2	&	4	&&	-4	&	1	&&	0	&	1	&	2	&	5	&	3	&	6	\\
Mean	&	1	&	1	&&	-7	&	24	&	43	&&	5	&	9	&&	13	&	20	&	21	&	19	&	27	&	33	\\
Mean	&	0	&	1	&&	3	&	-2	&	4	&&	-4	&	1	&&	0	&	3	&	1	&	-3	&	-4	&	3	\\
Excess	&	1	&	0	&&	0	&	0	&	0	&&	-1	&	-1	&&	-5	&	-4	&	-4	&	-2	&	-2	&	-1	\\
Excess	&	1	&	1	&&	-1	&	0	&	0	&&	-15	&	-15	&&	-11	&	-10	&	-9	&	-7	&	-7	&	-6	\\
Excess	&	0	&	1	&&	0	&	0	&	0	&&	-2	&	-2	&&	-10	&	-9	&	-8	&	-4	&	-4	&	-3	\\
\bottomrule[1.0pt]
    \end{tabular}}
    \caption{Percentage improvement of the mean total profitability over the perfect info. basestock heuristic.}
    \label{tab:results_standalone}
\end{table}
\color{\highlightcolor}

Referring to Table~\ref{tab:results_standalone}, we observe that under perfect information, the capped basestock network heuristic significantly outperforms the vanilla basestock approach when $\theta = 1$ in both the mean and zero-inventory settings, highlighting the single-channel method's inability to effectively leverage cross-channel inventory. However, for $\theta = 0$, this advantage is only observed when $p=b=1$. This occurs because, in low sales-rate settings (even during a peak week), lower demand quantiles (when $p=0$ or $b=0$) lead to negligible order quantities, often zero. A slight increase in inventory at stores improves results, which explains the superior performance at higher quantiles (when $p=b=1$ or when $\theta =1$). This same reasoning explains why, under imperfect information (Uniform distribution), the capped heuristic performs better in the zero-inventory case. Across all settings, we note that when incremental online inventory pooled at the network level is pushed to stores ($\theta = 1$), both channels benefit, enhancing overall performance. 
All gains disappear in the excess inventory setting, as the problem simplifies to a single-channel in-store allocation problem that is separable by location. For each individual problem, the basestock method is known to be optimal for maximizing mean profitability.

While the perfect-information basestock network heuristic outperforms the imperfect-information and distribution-free settings, its advantage diminishes in realistic scenarios where demand distribution information is inherently imperfect. Therefore, for a practical comparison, we evaluate the imperfect distribution setting against BIO, a distribution-free approach. 
Across the zero, mean, and excess inventory settings, BIO-best outperforms BSU Net$_{\theta=1}$ (network basestock counterpart) heuristic by 0.4\%, 10.1\%, and 2.4\%, respectively, on average across price settings, resulting in an overall average gain of approximately 4.3\%. Notably, we observe significant gains in the $p=b=1$ setting as well as across all mean inventory settings (average +9\%, in 5 of 9 settings), while performance remains relatively low negative otherwise (average -1.6\%, in the rest). 

We also observe that the pure RO method under-performs relative to the baselines in many parameter regimes. However, this performance gap is closed with a simple tuning of $\lambda$ via the BIO strategy (also as illustrated in Example~\ref{example}). 
This BIO tuning is essential for data-driven distribution-free approaches to effectively compete against simple baselines (serving as an alternative to a full-scale SAA approach, which would require a large number of demand scenarios—potentially correlated across all locations and channels—leading to both computational challenges as well as difficulties in scenario generation).
\color{black}
Specifically, we observe the BIO models with $\lambda >0$ enhance the performance of the pure RO models on average by 5-6\% (with BIO 10\%, 25\%) which increases to 12\% by tuning $\lambda$ for each SKU using BIO-best. \textcolor{\highlightcolor}{From Fig.~\ref{fig:results_zero_new}, we observe that a gradual increase in $\lambda$ leads to an increase in mean profitability without compromising practical robustness.} BIO-best does achieve the best overall performance across the mean and the lower quantiles amongst all the BIO models. {The out-of-sample performance worsens when we employ overly optimistic BIO settings (e.g., $\lambda>$ 50\%), indicating that a pragmatic range of smaller $\lambda$ values are preferable.}

From Fig.~\ref{fig:results_zero_new}, we observe that the ordering policy of the baseline models is invariant to $p+b$ but this is not the case for the robust models. The worst case demand magnitude tends to be large for high $b$ and relatively low for high $p$. We observe the same trend in the quantity of inventory purchased especially by pure RO models. 
{A key finding to highlight is that across all the omnichannel settings, the best BIO and RO models organically pool inventory by effectively fulfilling walk-in and online demand 
using inventory at the store} (indicated by a thin sliver or lack of an orange bar in Fig.~\ref{fig:results_zero_new} subplots a-f, which denotes the quantity allocated to the DCs). 
 
 The average run time across pure RO (unimodal strategy using $\lambda = 0$) and various BIO instances ($\lambda > 0$) are similar as they employ the same run time and iteration limits. We observe an average ($\pm$ standard deviation) run time of 53$\pm$28 seconds with 15$\pm$3 Benders iterations. 

\subsubsection{Business Value Assessment:}~\label{BVA}
We use the different inventory models to derive a replenishment policy that is executed weekly in tandem with a fine-grained customer transaction level  Monte-Carlo simulator that generates sample paths to evaluate various KPIs. This experiment replicates a real-world setting wherein individual walk-in and online orders are individually processed in the order in which they arrive. 
Daily customer-level orders are generated via an appropriate spread-down of week-location-level demands. Walk-in orders are met first, and e-commerce orders are processed next based on the cheapest fulfillment cost from a tiered set of origin nodes. The tiered node sets are created based on the inventory available to meet the mean daily predicted walk-in demand prior to the next replenishment. 

We execute the what-if planner every week over a rolling horizon (the 3-week peak period) for 30 Monte Carlo sample paths. While the first standalone experiment required one optimization instance per SKU (23 instances in total), we now solve 30 optimization instances per SKU per week over the chosen time horizon, yielding  a total of 23*30*2=1380 instances; Two consecutive optimization runs are required in a 3 week period because of the one week lead time and noting that the recommended decision in the third week is redundant. We compare the performance of \textcolor{\highlightcolor}{different optimizers: Basestock, PWL, BS Net$_{\theta = 0}$, BS Net$_{\theta^*}$, BS Net$_{\theta = 1}$, in the perfect distributional information setting, BSU Net$_{\theta = 0}$, BSU Net$_{\theta^*}$, BSU Net$_{\theta = 1}$, in the imperfect distributional information case, 
along with pureRO, BIO 10\%, BIO 25\%, BIO 55\% and BIO 50\% in the distribution-free case. Here $\theta^*$ is obtained via a grid search from 0.6 to 1 in increments of 0.1, and we note that this turn out to be 0.9 and 1 for the perfect and imperfect scenarios respectively.} We start with zero inventory in the first week, which automatically translates to an imbalanced mean inventory setting for the following week. The retailer weighed lost sales and achieved sales equally and we employ a price and back order penalty of 1, along with a holding cost of 0 to reflect their preference. The results of the experiments are provided in Fig.~\ref{fig:whatifplanner_results} and Table~\ref{tab:planner_results} respectively. 

\color{\highlightcolor}
We observe from Table~\ref{tab:planner_results} that in the perfect information setting, the network heuristic outperforms the single-channel heuristic at any value of $\theta$ by a significant margin. Specifically, BS Net$_{\theta^*}$ achieves 46\% higher profitability than the vanilla basestock policy, highlighting the value of cross-channel inventory management. However, under imperfect information, the profitability of the same network heuristic BSU Net$_{\theta^*}$ decreases by 15\% compared to its counterpart BS Net$_{\theta^*}$. For a practical comparison, where demand distribution information is inherently unknown, we evaluate the imperfect information network heuristic, BSU Net$_{\theta^*}$, against the BIO models, which follow a distribution-free approach. We observe that profitability increases by up to 10\% and 11\% for BIO 25\% and BIO 35\%, respectively. This gain stems from improved inventory turnover, which increases by approximately 6 points, while the service level declines by 9 percentage points—a trend also observed with the best-case perfect information heuristic, where the service level drops by 6 percentage points, for a similar increase in inventory turn-over.
\begin{figure}[t]
  \begin{center}
    \includegraphics[scale = 0.43]{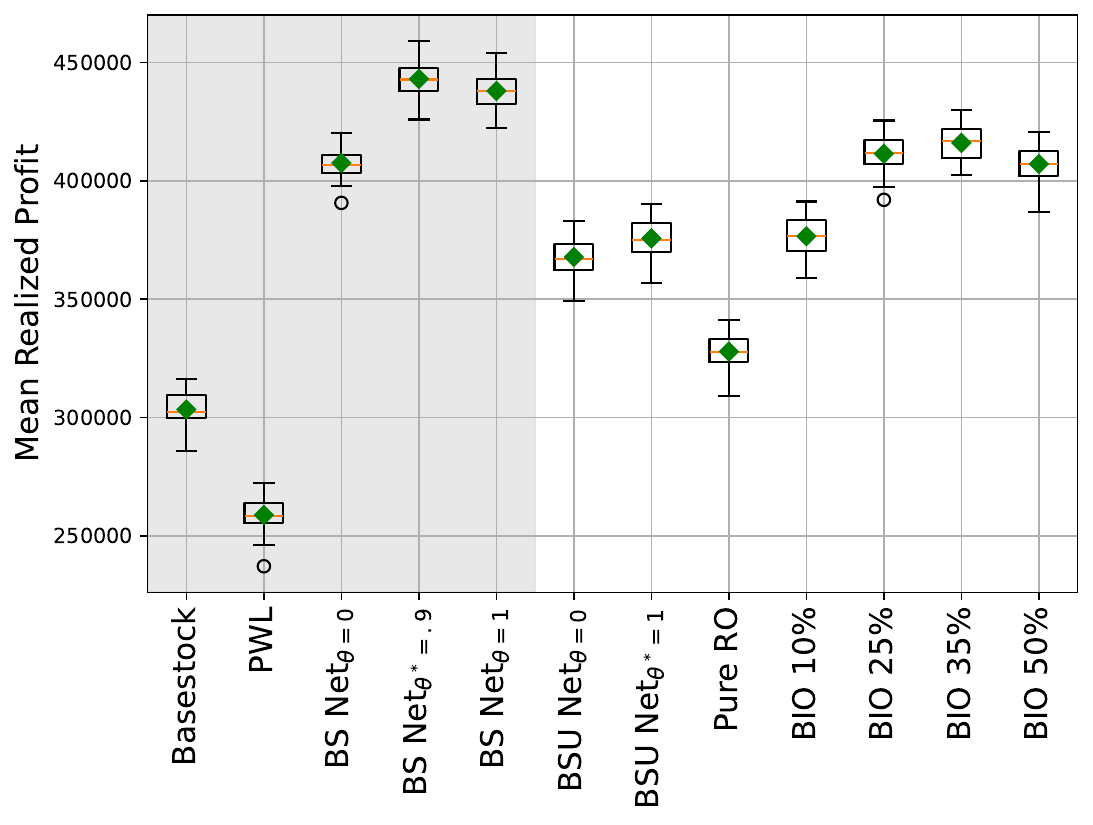}
    \includegraphics[scale = 0.43]{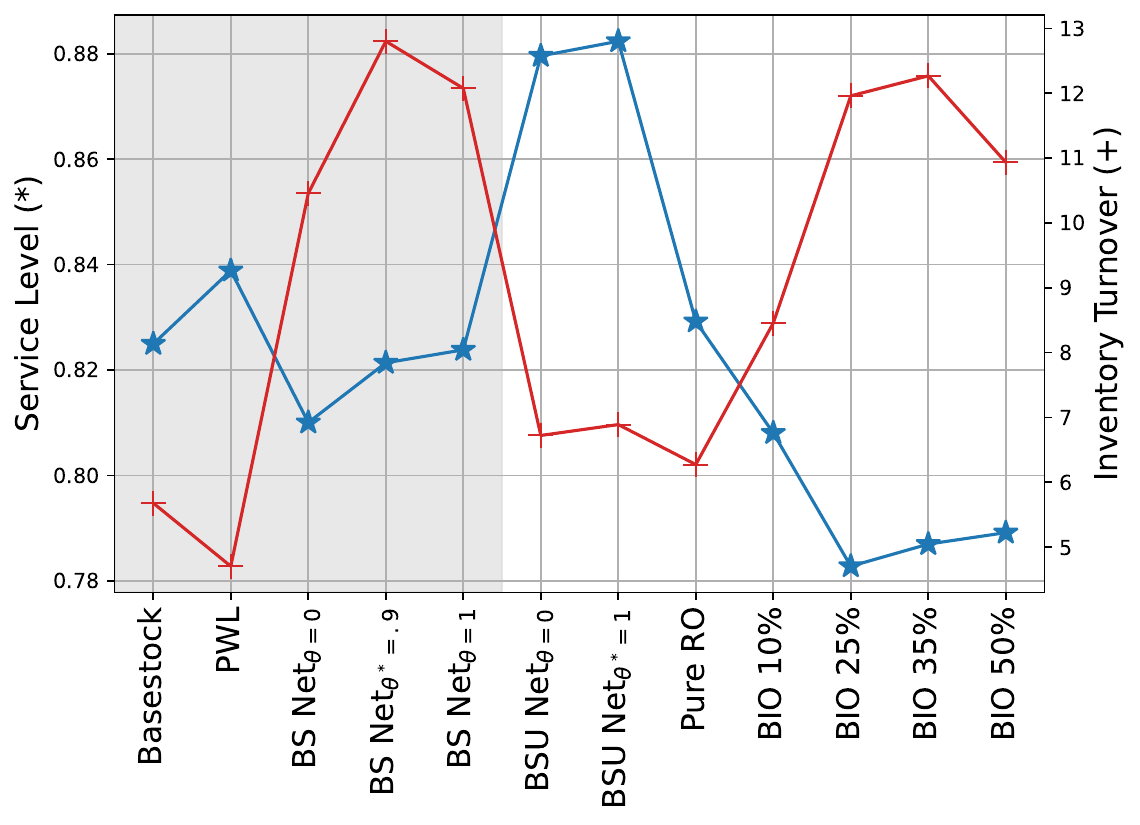}
 \caption{KPIs of different replenishment policies using the  transaction-level what-if simulator. Grey-shaded settings assume perfect distributional information; the rest reflect imperfect/distribution-free settings. }
 \label{fig:whatifplanner_results}
\end{center}
\end{figure}
\begin{table}[htb]
    \centering
    \scalebox{0.56}{
    \begin{tabular}{@{}ccccccccccccccc@{}}
    \toprule[1.0pt]
Distr. & Optimizer	&	Replenish	&	DC Reple- 	&	Walk-in &	Total 	&	Ship-from	&	Satisfied &	Shipping	& Purchase 	&	Excess Inv. Va-		&	E-Com 	&	Total 	&	Inv.	&	 Realized \\
Info. & &	Qty	&	nish Qty	&	Sales Qty	& Sales Qty	& Store Qty	&	Revenue  	& Cost 	& Cost &	lued at Cost 	&	Ser. Lvl.	& Ser. Lvl.	&	Tr.Ov. 	&  Profits \\
\midrule[0.4pt]
\multirow{5}{*}{Perfect}	&	Basestock	&	 12,449 	&	 2,937 	&	 6,390 	&	 9,206 	&	 59 	&	 1,312 	&	 13 	&	 996 	&	 255 	&	0.97	&	 0.82 	&	 5.68 	&	303	$\pm$	7	\\
	&	PWL	&	 13,341 	&	 2,955 	&	 6,545 	&	 9,361 	&	 147 	&	 1,338 	&	 13 	&	 1,066 	&	 313 	&	0.97	&	 0.84 	&	 4.71 	&	259	$\pm$	8	\\
	&	BS Net$_{\theta =0}$	&	 10,770 	&	 1,258 	&	 6,224 	&	 9,039 	&	 1,573 	&	 1,285 	&	 14 	&	 863 	&	 135 	&	0.97	&	 0.81 	&	 10.46 	&	408	$\pm$	7	\\
	&	BS Net$_{\theta^* =.9}$	&	 10,600 	&	 122 	&	 6,370 	&	 9,166 	&	 2,674 	&	 1,307 	&	 15 	&	 849 	&	 111 	&	0.96	&	 0.82 	&	 12.80 	&	443	$\pm$	8	\\
	&	BS Net$_{\theta =1}$	&	 10,718 	&	 -   	&	 6,401 	&	 9,193 	&	 2,793 	&	 1,312 	&	 15 	&	 859 	&	 119 	&	0.96	&	 0.82 	&	 12.08 	&	 438 	$\pm$	8	\\
\midrule[0.1pt]
\multirow{2}{*}{Imperfect}	&	BSU Net$_{\theta =0}$	&	 12,737 	&	 246 	&	 7,000 	&	 9,815 	&	 2,570 	&	 1,396 	&	 15 	&	 1,014 	&	 226 	&	0.97	&	 0.88 	&	 6.72 	&	368	$\pm$	8	\\
	&	BSU Net$_{\theta^* =1}$	&	 12,706 	&	 -   	&	 7,032 	&	 9,846 	&	 2,815 	&	 1,402 	&	 15 	&	 1,011 	&	 221 	&	0.97	&	 0.88 	&	 6.89 	&	 376 	$\pm$	8	\\
\midrule[0.1pt]
\multirow{5}{*}{Distr. free}	&	Pure RO	&	 12,205 	&	 43 	&	 6,446 	&	 9,253 	&	 2,764 	&	 1,321 	&	 15 	&	 978 	&	 232 	&	0.97	&	 0.83 	&	 6.27 	&	328	$\pm$	7	\\
	&	BIO 10\%	&	 11,152 	&	 41 	&	 6,211 	&	 9,017 	&	 2,764 	&	 1,286 	&	 15 	&	 894 	&	 167 	&	0.97	&	 0.81 	&	 8.46 	&	377	$\pm$	8	\\
	&	BIO 25\%	&	 10,198 	&	 53 	&	 5,927 	&	 8,735 	&	 2,756 	&	 1,244 	&	 15 	&	 818 	&	 113 	&	0.97	&	 0.78 	&	 11.96 	&	411	$\pm$	8	\\
	&	BIO 35\%	&	 10,216 	&	 91 	&	 5,976 	&	 8,782 	&	 2,715 	&	 1,252 	&	 15 	&	 821 	&	 113 	&	0.97	&	 0.79 	&	 12.27 	&	416	$\pm$	8	\\
	&	BIO 50\%	&	 10,419 	&	 219 	&	 5,997 	&	 8,806 	&	 2,591 	&	 1,254 	&	 15 	&	 832 	&	 124 	&	0.97	&	 0.79 	&	 10.93 	&	407	$\pm$	8	\\
    \bottomrule[1.0pt]
    \end{tabular}}
    \caption{Average observed metrics and KPIs across Monte Carlo  scenarios. The units of all revenue, cost and profits are in (K)\$. Service level and inventory turnover are referred to as ser. lvl. and inv. tr. ov. respectively. }
    \label{tab:planner_results}
\end{table}
These results underscore the benefits of network modeling in omnichannel settings, as well as the effectiveness of optimistic-robust methods, which are essential when distributional information is limited or imperfect. In such cases, pure robust models alone fail to achieve competitive performance--BIO 35\% yields 27\% higher profitability compared to pure RO.
\color{black}

We note that the achieved gap between the average and worst case (5th quantile) realized profit is nearly identical ($\sim$3\%, not shown in table) for both pure RO and BIO 25\% \textcolor{\highlightcolor}{and 35\%.} {Thus the overall results indicates that a calibrated bimodal inventory strategy can considerably boost average gain without compromising on the (practical) worst case performance. This performance gain can be attributed to BIO's ability to value different inventory positions using a `bifocal' optimistic and pessimistic lens that either improves or preserves the average and (practical) worst case KPIs, while being data-driven and distribution-free.} 

We recommend the use of the BIO model when it is compatible with the application (e.g., in an omnichannel case) and exercising caution when increasing the value of $\lambda$ beyond 50\% given the potential impact on profitability metrics.

\begin{APPENDICES}

\section{Extension to incorporate the movement of pre-existing inventory across nodes}~\label{DCtoStores}
We show how we extend the main model when in addition to supplier purchase, we consider the movement of pre-existing inventory from different nodes in the network
to other nodes (say warehouse to store allocation of fashion items). Here we refer to $S$ as the supplier node and define (1) $C_{l'l}$ as the per-unit transportation or purchase when shipping from node or supplier $l' \in \Lambda \cup S$ to node $l$ (and use it instead of $C_l$); (2) $L_{ll'}$ as the lead time between nodes $l$ and $l'$ where $L_{l}$, is the maximum lead time for node $l$, in particular, $\max_{l' \in \Lambda \cup S} L_{l'l}$, and (3) $x_{tl}^{l'}$ as the inventory allocated from node $l'\in \Lambda \cup S$ in period $t$ for node $l$.

The model changes as follows. In the objective~\ref{objective}, we replace term $C_{l} x^t_{l}$ by $\sum_{l' \in \Lambda \cup S}  C_{l'l} x^{l'}_{tl}$. Next, we replace constraints (\ref{invBalance}) and (\ref{stateupdate}) as follows:
\begin{align}
&  s^b_{tl}  + \sum_{z\in Z}
   y_{tlz} + I_{t+1,l}^0 = I_{tl}^0  + I_{tl}^1  \mathbbm{1}_{L_{l} >= 0} + \sum_{l' \in \Lambda \cup S} x^{l'}_{tl}  \mathbbm{1}_{L_{l'l}=0} - \sum_{l' \in \Lambda } x^{l}_{tl'} && \forall \ l \in \Lambda,\\
& I^j_{t+1,l} = I^{j+1}_{tl}  \mathbbm{1}_{j < L_l}+ \sum_{l' \in \Lambda \cup S} x^{l'}_{tl} \mathbbm{1}_{L_{l'l}=j} &&  \forall \ 1\leq j \leq L_{l}, \ l \in \Lambda
\end{align}
Also, in the static robust formulation, constraint (\ref{invBalance_new}) can be rewritten as:
\begin{align}
&  s^b_{tl}  + \sum_{z\in Z}
   y_{tlz} + I_{t+1,l} = I_{tl}  + \bar{I}_{l}^{t+1}  \mathbbm{1}_{t<L_l}  + \sum_{l' \in \Lambda \cup S} x_{t-L_{l'l},l}^{l'}\mathbbm{1}_{t \geq L_{l'l}} - \sum_{l' \in \Lambda } x^l_{t l'} && \forall \ l \in \Lambda.
\end{align}

\section{Optimistic-robust modeling to exploit an information edge}~\label{extensions} 
There are alternative ways of developing an allied-adversary. 
For example, we can have $\lambda$ being time-dependent, channel dependent or even location-dependent. For the uncertainty set we adopt, time or channel dependent $\lambda$ does not significantly affect our results. On the other hand, a location-specific $\lambda$ enables us to model use cases where we can allow pure optimism in some locations, say at most $Q$ in number, and only permit pure adversarial demands in others. The specific optimistic locations (or location clusters) can be configured by the user based on an information `edge', i.e., extraneous demand signals that indicate an increased chance of a sales spike. 
\textcolor{\highlightcolor}{Note that by tuning a $\lambda$ parameter by location using a validation dataset, we can obtain an SAA-like approximation within our BIO setting. On the other hand,} the challenge with introducing location dependence is that the resultant demand may not longer belong to the uncertainty set we use in our formulation. In an iterative algorithm like CCG, 
the previously elicited adversarial demands together with the current optimistic demand in the master, are not guaranteed to lie within the set $U$ (even though they were feasible to some of the past realizations of the optimistic demand). \textcolor{\highlightcolor}{An elastic uncertainty set can be introduced to mitigate this limitation, and the resultant BIO approximation can be practically useful in its ability to exploit an information edge.}  

\section{Closed form solutions for a single location walk-in setting}\label{closedFormSingle}
\begin{proposition}
{\em For a single location problem with time-horizon $T=1$ and zero lead time, if the uncertainty set $U = [D_{\min}, D_{\max}]$ and initial inventory $I^0 = 0$, the optimal pure robust and best case solutions and their respective objective functions are obtained as follows assuming $p+b\geq c>0$:}
\begin{align}
x^*_{BIO-1} &= \left\{\begin{array}{ll} D_{\max} & \textrm{ if } p\geq c\\ 0 & \textrm{otherwise.} \end{array}\right.,   && Z^*_{BIO-1} = \left\{\begin{array}{ll} (p-c) D_{\max} & \textrm{ if } p\geq c\\ (p-c)D_{\min} & \textrm{otherwise.} \end{array}\right. \\
x^*_{BIO-0} &= \frac{(p+h)D_{\min} +b
            D_{\max}}{(p+b+h)} ,   && Z^*_{BIO-0} = \frac{(p+b-c)(p+h)D_{\min} -(h+c) b
            D_{\max}}{(p+b+h)} 
\end{align}
\end{proposition}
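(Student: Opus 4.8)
The plan is to collapse the three-stage problem to a scalar optimization by solving the stages from the inside out. With a single location, one period, zero lead time and $I^0=0$, the inventory balance forces $s+I_1=x$, so the innermost fulfillment problem is $\max_{0\le s\le \min(x,D)}\big[ps-b(D-s)-h(x-s)\big]$. Since the coefficient of $s$ equals $p+b+h\ge0$, the optimum is $s=\min(x,D)$ with leftover $I_1=(x-D)^+$. Subtracting the purchase cost $cx$, I would record the pointwise value
\[
g(x,D)=\left\{\begin{array}{ll}(p+h)D-(h+c)x,& D\le x,\\(p+b-c)x-bD,& D\ge x.\end{array}\right.
\]
The key structural observation is that, for fixed $x$, the map $D\mapsto g(x,D)$ is concave and piecewise linear: it rises with slope $p+h\ge0$ up to the kink at $D=x$, where it attains $(p-c)x$, and then falls with slope $-b\le0$. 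Both closed forms follow by optimizing this tent-shaped function over the box $[D_{\min},D_{\max}]$ and then over $x\ge0$.

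For BIO-0 (pure robust, $\lambda=0$) I would take the inner minimum over $D$ first. Concavity in $D$ places the worst case at an endpoint, so on the range $x\in[D_{\min},D_{\max}]$ the inner value is $\min\{(p+h)D_{\min}-(h+c)x,\ (p+b-c)x-bD_{\max}\}$, the lower envelope of a decreasing and an increasing line in $x$. Maximizing this concave envelope forces the two lines to cross; solving $(p+h)D_{\min}-(h+c)x=(p+b-c)x-bD_{\max}$ gives $x^*_{BIO-0}=\frac{(p+h)D_{\min}+bD_{\max}}{p+b+h}$, and back-substitution into either line yields the stated $Z^*_{BIO-0}$. I would note that this $x^*$ is a convex combination of $D_{\min}$ and $D_{\max}$ (weights $\frac{p+h}{p+b+h}$ and $\frac{b}{p+b+h}$), hence automatically in $[D_{\min},D_{\max}]$, so no boundary case is active and checking $x$ outside the interval does not improve the value.

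For BIO-1 (best case, $\lambda=1$) the two max players act in concert, so I would maximize $g$ jointly over $(x,D)$. The tent inequality $g(x,D)\le(p-c)x$, with equality exactly when $D=x$, reduces the problem to maximizing $(p-c)x$ subject to the alignment $x=D\in[D_{\min},D_{\max}]$, supplemented by the off-alignment regimes $x<D_{\min}$ and $x>D_{\max}$. When $p\ge c$ the objective increases in $x$, so the optimum aligns at the top of the box, giving $x^*_{BIO-1}=D_{\max}$ and $Z^*_{BIO-1}=(p-c)D_{\max}$.

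The step I expect to be the main obstacle is pinning down the optimizer in the $p<c$ branch, where $(p-c)x$ decreases and the naive instinct is to drive $x$ to $0$. Here I would directly compare the two genuinely available options: holding no inventory (the ally then picks $D=D_{\min}$, giving value $-bD_{\min}$) versus aligning the purchase with the smallest demand, $x=D=D_{\min}$ (giving value $(p-c)D_{\min}$). The assumption $p+b\ge c$ is precisely the condition $p-c\ge-b$ that makes alignment weakly preferable, so the optimal best-case value is $Z^*_{BIO-1}=(p-c)D_{\min}$. This comparison, rather than any computation, is the crux, and I would also verify the $x<D_{\min}$ and $x>D_{\max}$ pieces of $g$ to confirm that no interior allocation beats the aligned endpoint.
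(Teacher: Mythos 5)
Your proof is correct and follows essentially the same skeleton as the paper's: collapse the fulfillment stage to the tent-shaped piecewise-linear value $g(x,D)=(p-c)D-(p+b-c)(D-x)^+-(h+c)(x-D)^+$, place the robust worst case at an endpoint of $[D_{\min},D_{\max}]$ and maximize the resulting lower envelope at the crossing of the two lines for BIO-0, and exploit alignment $D=x$ for BIO-1. Where you genuinely add something is the $p<c$ branch of BIO-1, and here your care pays off: the paper's own proof reduces the best case to $\max_x\,(p-c)x$ subject to $D_{\min}\le x\le D_{\max}$ but then concludes ``$x^*=0$ if $p<c$'' --- an allocation that is not even feasible for its own reduced problem, and at which the ally's value is $-bD_{\min}$, matching the stated $Z^*_{BIO-1}=(p-c)D_{\min}$ only in the degenerate case $p+b=c$. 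Your explicit comparison of $x=0$ (value $-bD_{\min}$) against alignment at $x=D=D_{\min}$ (value $(p-c)D_{\min}$), using precisely the hypothesis $p+b\ge c$, identifies the maximizer consistent with the stated optimal objective as $x^*=D_{\min}$, with $x^*=0$ tying only when $p+b=c$; this quietly repairs a slip in the paper's statement and proof. You also check the off-box regimes $x<D_{\min}$ and $x>D_{\max}$ (increasing with slope $p+b-c\ge 0$ and decreasing with slope $-(h+c)<0$, respectively), which the paper's reduction passes over silently, and you note that $x^*_{BIO-0}$ is a convex combination of $D_{\min}$ and $D_{\max}$ so no boundary case activates in the robust branch --- both small but worthwhile tightenings of the published argument.
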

With proposition~\ref{superpositionTheorem}, we can obtain  $x^*_{BIO-\lambda}$ and $Z^*_{BIO-\lambda}$ in closed form. Also, we know that the closed form newsvendor solution in this case is $x^*_{SAA} = VaR_D\left(\frac{p+b-c}{p+b+h}\right)   $ \\
{\em Proof: }
Let us understand the best case and worst demand in a simple
 setting of a single location and single time period. 
Now let us consider the pure robust setting. 
\begin{align*}
& \max_{x} \min_{D_{\min}<=D <= D_{\max}} (p-c) D - (p+b-c) (D-x)^+  - (h+c)(x-D)^+\\
 \implies & \max_{x} \left\{\begin{array}{ll}(p+h)D_{\min}-(h+c)x \ \ &\textrm{if }   x >= \frac{(p+h)D_{\min} + b
            D_{\max}}{(p+b+h)}\\ (p+b-c) x - bD_{\max}
                                                     &\textrm{otherwise}\end{array}\right.\\
 \implies & x^* =  \frac{(p+h)D_{\min} +b
            D_{\max}}{(p+b+h)} \textrm{ if } p+b >= c\textrm{ and  Opt Obj =}  \frac{(p+b-c)(p+h)D_{\min} -(h+c) b
            D_{\max}}{(p+b+h)} 
\end{align*}
Now let us study the best case problem:
\begin{align*}
& \max_{x} \max_{D_{\min}<=D <= D_{\max}} (p-c) D - (p+b-c) (D-x)^+  - (h+c)(x-D)^+\\
 \implies & \max_{x} \ \ (p-c)x \quad \textrm{where} \ \ D_{\min} \leq x \leq D_{\max}\\
 \implies & x^* = D_{\max} \textrm{ if } p\geq c,  \textrm{ else} \ \ 0.
\end{align*} \hfill \halmos
\color{\highlightcolor}
\section{Proof of Theorem~\ref{superpositionTheorem}}
\begin{theorem*}
 {\em Given $\lambda$, there exists an $x^*_{BIO-\lambda} $ such that $x^*_{BIO-\lambda} = \lambda x^*_{BIO-1} + (1-\lambda)x^*_{BIO-0}$ and the corresponding optimal objective value $Z^*_{BIO-\lambda}$ satisfies $Z^*_{BIO-\lambda} = \lambda Z^*_{BIO-1} + (1-\lambda)Z^*_{BIO-0}$. In other words, for any given $\lambda$, there exists an optimal allocation for $BIO-\lambda$ that is a linear superposition of the respective optimal quantities for the best case scenario (when $\lambda = 1$, i.e., $BIO-1$) and the robust scenario (when $\lambda = 0$, i.e., $BIO-0$). Moreover, the optimal objective value follows  the same superposition structure. }
\end{theorem*}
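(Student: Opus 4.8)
Write $\Phi(\mathbf{x},\mathbf{D})$ for the value of the innermost fulfilment maximization, i.e. $\Phi(\mathbf{x},\mathbf{D})=\max_{\{\mathbf{s},\mathbf{y},\mathbf{I}\}\in F}\sum_{t}O(\cdot)$ at net demand $\mathbf{D}$ and allocation $\mathbf{x}$. Then $Z^*_{BIO-0}=\max_{\mathbf{x}}\min_{\mathbf{D}^-\in U}\Phi(\mathbf{x},\mathbf{D}^-)$, $Z^*_{BIO-1}=\max_{\mathbf{x},\mathbf{D}^+\in U}\Phi(\mathbf{x},\mathbf{D}^+)$, and $Z^*_{BIO-\lambda}=\max_{\mathbf{x},\mathbf{D}^+\in U}\min_{\mathbf{D}^-\in U}\Phi(\mathbf{x},\lambda\mathbf{D}^++(1-\lambda)\mathbf{D}^-)$. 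The key observation is that, once the initial inventory vanishes, every fulfilment constraint (\ref{walkinSales})--(\ref{invBalance_new}) becomes a \emph{homogeneous} linear relation in the joint vector $(\mathbf{s},\mathbf{y},\mathbf{I},\mathbf{x},\mathbf{D})$ and $O$ is linear. Hence $\Phi$ is jointly concave and positively homogeneous of degree one in $(\mathbf{x},\mathbf{D})$, therefore superadditive, and the fulfilment region is a cone that adds across summands, $F(\mathbf{x}_1,\mathbf{D}_1,0)+F(\mathbf{x}_2,\mathbf{D}_2,0)=F(\mathbf{x}_1+\mathbf{x}_2,\mathbf{D}_1+\mathbf{D}_2,0)$, the nontrivial ``$\supseteq$'' being a flow-splitting of a feasible point that I would justify from the network/state-update structure of (\ref{invBalance_new}). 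Since $U$ is convex, the combined net demand stays in $U$.

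\textbf{Lower bound (constructive).} I would attain the chord explicitly. Take $\mathbf{x}=\lambda\mathbf{x}^*_{BIO-1}+(1-\lambda)\mathbf{x}^*_{BIO-0}$ and fix the ally demand $\mathbf{D}^+=\mathbf{D}^{+*}_{BIO-1}$. For an arbitrary adversary $\mathbf{D}^-$, form the $\lambda$-weighted sum of the optimal best-case fulfilment at $(\mathbf{x}^*_{BIO-1},\mathbf{D}^{+*}_{BIO-1})$ and the $(1-\lambda)$-weighted optimal robust fulfilment at $(\mathbf{x}^*_{BIO-0},\mathbf{D}^-)$. Additivity of $F$ and linearity of $O$ make this feasible for net demand $\lambda\mathbf{D}^{+*}_{BIO-1}+(1-\lambda)\mathbf{D}^-$ with value $\lambda Z^*_{BIO-1}+(1-\lambda)\Phi(\mathbf{x}^*_{BIO-0},\mathbf{D}^-)$. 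Minimizing over $\mathbf{D}^-$ and using $\min_{\mathbf{D}^-}\Phi(\mathbf{x}^*_{BIO-0},\mathbf{D}^-)=Z^*_{BIO-0}$ gives $Z^*_{BIO-\lambda}\ge\lambda Z^*_{BIO-1}+(1-\lambda)Z^*_{BIO-0}$, and shows the superposed allocation is the candidate optimizer.

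\textbf{Upper bound (the hard direction).} I would start from an optimal BIO-$\lambda$ pair $(\mathbf{x}^*,\mathbf{D}^{+*})$ with adversary response $\mathbf{D}^{-*}$ and optimal fulfilment $\mathbf{u}^*$, so $Z^*_{BIO-\lambda}=\Phi(\mathbf{x}^*,\lambda\mathbf{D}^{+*}+(1-\lambda)\mathbf{D}^{-*})$. Using the additive decomposition of $F$, split $\mathbf{u}^*=\lambda\mathbf{u}_1+(1-\lambda)\mathbf{u}_0$ and $\mathbf{x}^*=\lambda\mathbf{x}_1+(1-\lambda)\mathbf{x}_0$ with $\mathbf{u}_1$ feasible for $(\mathbf{x}_1,\mathbf{D}^{+*})$ and $\mathbf{u}_0$ for $(\mathbf{x}_0,\mathbf{D}^{-*})$; linearity of $O$ yields $Z^*_{BIO-\lambda}=\lambda\,O(\mathbf{u}_1,\mathbf{D}^{+*},\mathbf{x}_1)+(1-\lambda)\,O(\mathbf{u}_0,\mathbf{D}^{-*},\mathbf{x}_0)$. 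The first term is at most $\max_{\mathbf{D}^+}\Phi(\mathbf{x}_1,\mathbf{D}^+)\le Z^*_{BIO-1}$. The delicate term is the second: I must show $\Phi(\mathbf{x}_0,\mathbf{D}^{-*})=\min_{\mathbf{D}^-\in U}\Phi(\mathbf{x}_0,\mathbf{D}^-)\le Z^*_{BIO-0}$, i.e. that the adversary's response to the \emph{aggregate} problem is simultaneously worst-case for the extracted robust component $\mathbf{x}_0$. I would obtain this from the fulfilment dual (the $(\alpha,\beta,\gamma)$ system (\ref{alphasub})--(\ref{gammasubzero}), whose feasible region does not depend on $(\mathbf{x},\mathbf{D})$): the optimal dual $\mathbf{d}^*$ at the aggregate point stays dual-feasible and, by complementary slackness transported through the primal split, is optimal for each component; its demand-supergradient then certifies $\mathbf{D}^{-*}\in\arg\min_{\mathbf{D}^-\in U}\Phi(\mathbf{x}_0,\mathbf{D}^-)$. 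Combining the bounds gives $Z^*_{BIO-\lambda}\le\lambda Z^*_{BIO-1}+(1-\lambda)Z^*_{BIO-0}$, which matches the lower bound and certifies the superposed allocation as optimal.

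\textbf{Where the difficulty sits.} The entire content is in the upper bound, and concretely in the claim that the single adversarial demand $\mathbf{D}^{-*}$ selected against the pooled allocation is worst-case for the $(1-\lambda)$-scaled sub-allocation $\mathbf{x}_0$; this is exactly the point at which decision-dependence of the adversary could, a priori, destroy a clean interpolation, and it is the zero-initial-inventory hypothesis that collapses $F$ to a cone and lets one dual certificate $\mathbf{d}^*$ serve all three problems at once. I expect the two steps needing the most care to be (i) the ``$\supseteq$'' primal-splitting of a feasible fulfilment under positive lead times, since a naive split need not respect the pipeline state-update (\ref{stateupdate}), and (ii) verifying that complementary slackness indeed transfers to each summand so that $\mathbf{d}^*$ is component-wise optimal; I would also confirm the homogeneity of (\ref{invBalance_new}) holds for the full initial state, not merely the on-hand term, or restrict to that case.
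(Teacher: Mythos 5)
Your skeleton --- a bidirectional mapping between feasible solutions of $P^{BIO-\lambda}$ and the pair $(P^{BIO-1},P^{BIO-0})$, followed by a sandwich --- is the same as the paper's, and your lower bound is the paper's ``combine'' direction carried out correctly: the paper scales and adds feasible tuples, while you additionally let the combined fulfilment respond to an \emph{arbitrary} adversary $\mathbf{D}^-$ before minimizing, which is the form actually needed to bound the max--min value. Your deferred ``$\supseteq$'' flow-splitting is what the paper does constructively: it keeps the demands, splits the sales via per-location $\min$/ratio formulas, splits the $y$-flows greedily over an enumeration of locations, sets $\{I\}_{1}=0$, backs out $\{x\}_{1}$ from the inventory-balance equations, and takes $\{x\}_{0}$ as the scaled residual. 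Your concern about the pipeline under positive lead times is legitimate and is resolved by precisely this $\{I\}_{1}=0$ device (and the hypothesis should indeed be read as a vanishing initial pipeline, as you suspected).

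The genuine gap is in your upper bound, at the very step you flag as delicate. Complementary slackness does transfer to the summands (a tight aggregate constraint forces each nonnegative component slack to vanish), but that only shows $\mathbf{u}_0$ is an optimal \emph{fulfilment} for $(\mathbf{x}_0,\mathbf{D}^{-*})$, i.e.\ its value equals $\Phi(\mathbf{x}_0,\mathbf{D}^{-*})$. It cannot show $\mathbf{D}^{-*}\in\arg\min_{\mathbf{D}^-\in U}\Phi(\mathbf{x}_0,\mathbf{D}^-)$ by a supergradient: $\Phi(\mathbf{x}_0,\cdot)$ is \emph{concave} in the demand (an LP value as a function of right-hand sides), so a dual-induced linear overestimator touching at $\mathbf{D}^{-*}$ certifies a maximizer along that hyperplane, never a minimizer; the adversary's problem is a concave minimization over $U$, which is nonconvex --- this is exactly why the paper must reformulate the subproblem as a MIP via Propositions~\ref{Integral_ExtremePoints} and~\ref{RLT_subproblem} rather than invoke LP duality. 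Moreover, the split $(\mathbf{x}_0,\mathbf{u}_0)$ itself depends on $\mathbf{D}^{-*}$, so even a valid certificate at the aggregate point would not transfer; what is needed is that some $\mathbf{D}^-$ be worst-case for the very component it induces, a fixed-point-type statement your argument does not supply. For fairness: the paper's own write-up elides the same point --- it splits the BIO-$\lambda$ optimum into feasible tuples of $P^{BIO-1}$ and $P^{BIO-0}$ and asserts each ``can be improved to its respective optimal solution''; for the pure maximization $P^{BIO-1}$ that is valid, but for the max--min $P^{BIO-0}$ a feasible tuple evaluated at a non-worst-case $\mathbf{D}^-$ can have value exceeding $Z^*_{BIO-0}$, so the improvement claim presupposes exactly the property you tried to prove. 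Your diagnosis of where the difficulty sits is thus sharper than the paper's prose, but your dual-supergradient resolution is invalid, and with it your upper bound --- hence the theorem --- remains unproven in the proposal.
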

{\em Proof:} The above result holds when $\lambda$ equals 0 or 1 immediately. So, we will now consider a case when $0< \lambda <1 $. Let us refer to the BIO-$\lambda$ problem as $P^{BIO-\lambda}$. We first show bi-directional feasibility, that is: (1) any feasible solution of the problems $P^{BIO-0}$ and $P^{BIO-1}$, can be combined to arrive at a feasible solution for $P^{BIO-\lambda}$ and (2) any feasible solution to $P^{BIO-\lambda}$ can be disaggregated to arrive at feasible solutions to  $P^{BIO-0}$ and $P^{BIO-1}$.

We use the following notation for feasible solutions for $P^{BIO-\lambda}$,  $P^{BIO-0}$ and $P^{BIO-1}$ respectively: $\{x,D^+_{\lambda}, D^-_{\lambda},s, y, I\}$, $\{x^+,D^+, s^+, y^+, I^+\}$ and $\{x^-, D^-,s^-, y^-, I^-\}$. If $\{x,D^+_{\lambda}, D^-_{\lambda},s, y, I\}$ is a feasible solution to $P^{BIO-\lambda}$, we know that the following equations are satisfied $\forall \ t\in \ca{T}$:
\begin{align}
\label{walkinSalesL} & s^b_{tl} \leq \lambda {D}_{tl}^{b+} + (1-\lambda) {D}_{tl}^{b-}
                                 &&  \forall \ l \in \Lambda,\\
\label{ecomSalesL} &\sum_{l \in\Lambda} 
  y_{tlz} \leq \lambda  {D}_{tz}^{o+} + (1-\lambda)  {D}_{tz}^{o-}  && \forall \    z \in Z,\\
  \eqlabel{invBalance_newL}&  s^b_{tl}  + \sum_{z\in Z}
  y_{tlz} + I_{t+1,l} = I_{tl}  + \bar{I}_{l}^{t+1}  \mathbbm{1}_{t<L_l}  + x_{t-L_{l},l}\mathbbm{1}_{t \geq L_{l}}  && \forall \ l \in \Lambda,\\
 &  I_{0l} = \bar{I}_{l}^0 && \forall \ l \in \Lambda.
\end{align}
If $\{x^+,D^+, s^+, y^+, I^+\}$ and $\{x^-, D^-,s^-, y^-, I^-\}$ are feasible solutions to $P^{BIO-0}$ and $P^{BIO-1}$ respectively, we know that the following equations are satisfied $\forall \ t\in \ca{T}$:
\begin{align}
\label{walkinSales1} & s^{b+}_{tl} \leq {D}_{tl}^{b+}, \quad  s^{b-}_{tl} \leq {D}_{tl}^{b-} &&  \forall \ l \in \Lambda,\\
\label{ecomSales1} &\sum_{l \in\Lambda} 
  y_{tlz}^+ \leq{D}_{tz}^{o+}, \quad \sum_{l \in\Lambda} 
  y_{tlz}^- \leq {D}_{tz}^{o-}   && \forall \    z \in Z,\\
  \eqlabel{invBalance_new1}&  s^{b+}_{tl}  + \sum_{z\in Z}
  y_{tlz}^+ + I_{t+1,l}^+ = I_{tl}^+  + \bar{I}_{l}^{t+1}  \mathbbm{1}_{t<L_l}  + x_{t-L_{l},l}^+\mathbbm{1}_{t \geq L_{l}}  && \forall \ l \in \Lambda,\\
  \eqlabel{invBalance_new2}&  s^{b-}_{tl}  + \sum_{z\in Z}
  y_{tlz}^- + I_{t+1,l}^- = I_{tl}^-  + \bar{I}_{l}^{t+1}  \mathbbm{1}_{t<L_l}  + x_{t-L_{l},l}^-\mathbbm{1}_{t \geq L_{l}}  && \forall \ l \in \Lambda\\
 &  I_{0l}^+ = \bar{I}_{l}^0, \quad I_{0l}^- = \bar{I}_{l}^0 && \forall \ l \in \Lambda
\end{align}

It is easy to see that every feasible solution of the problems $P^{BIO-0}$ and $P^{BIO-1}$ can be combined  with scaled inventory and fulfillment variables to derive a feasible solution of  $P^{BIO-\lambda}$ as follows: 
$\{x,s,y, I\}_{\lambda}$ = $\lambda \{x^+,s^+, y^+, I^+\} + (1-\lambda)\{x^-,s^-, y^-,I^-\}$, $\{D^+\}_{\lambda} = \{D^+\}$ and $\{D^-\}_{\lambda} = \{D^-\}$ where all operations are element-wise.
And because the fulfillment and inventory variables are scaled, along with the demand, the resultant objective is a superposition of the two objectives\footnote{\textcolor{\highlightcolor}{Note that if we used a channel specific $\lambda$ (as we did in in equations~\ref{bioFormulation}-\ref{bio_initialInv}), then we use the same channel specific $\lambda$ for the fulfillment variables here. The proof is indifferent to this choice and works in the general case too.}}, i.e., 
$Z_{BIO-\lambda} = \lambda Z_{BIO-1} + (1-\lambda)Z_{BIO-0}$, where the objectives correspond to the feasible solutions being considered.


We will now show that any feasible solution to the problem $P^{BIO-\lambda}$ can be used to generate a feasible solution to the problems $P^{BIO-0}$ and $P^{BIO-1}$.  Note that the constraints of $P^{BIO-\lambda}$ can be viewed as linear superpositions of the respective
constraints of the two problems $P^{BIO-0}$ and $P^{BIO-1}$, barring the respective upper and lower bounds of the individual variables. So, there may be many feasible solutions to the disaggregated set of constraints and here we show how we generate one of these.
We let the demand remain the same across the problems, i.e., $\{D^+\}_{P^{BIO-1}} = \{D^+\}_{\lambda}$ and $\{D^-\}_{P^{BIO-0}} = \{D^-\}_{\lambda}$. We first generate sales and fulfillment variables to satisfy the constraints (\ref{walkinSales1}-\ref{ecomSales1}) that ensures that the sales are less than the channel-specific demand in every period and location. 
The sales less than demand constraints are separable by period and so the following discussion regarding fulfillment variables $(s,y)$ is per period and we drop the subscript $t$ for convenience. We begin with the walk-in channel where we can further decompose the constraints by location. 
we set the sales values as follows: $\{s^+\} = \min \left\{ \{D\}^{+}, \{s\}/ \lambda \right\}$ and $\{s^-\} =  (\{s\}  - \lambda \{s^+\})/(1-\lambda)$. Note that because of the way we set the optimistic sales, the pessimistic portion of the sales also satisfies the sales less than its respective demand constraint (\ref{walkinSales1}). For the $y$ variables, we first order the location in an enumerated sequence, say $l = 1,2,...|\Lambda|$ and then $y_{lz}^+ = \min\{ \max \{0,D^+_{z} -\sum_{l'<l} y_{l'z}^+ \} , y_{lz,\lambda}/\lambda\}$ and $y_{lz}^- =  (y_{lz,\lambda} - \lambda y_{lz}^+ )/(1-\lambda)$. Similarly to the walk-in case, these fulfillment values also satisfy the counterpart constraint (\ref{ecomSales1}). 

What remains to be identified are $\{x^+,I^+\}$ and $\{x^-,I^-\}$ for each location $l \in \Lambda$. 
For simplified notation, we let $S^+_{tl} = s^{b+}_{tl} + \sum_{z\in Z} y_{tlz}^+$ and $S^-_{tl} = s^{b-}_{tl} + \sum_{z\in Z} y_{tlz}^-$. 
We skip subscript $l$ as the same logic follows for each $l\in \Lambda$.
We first identify the optimistic variables $\{x^+,I^+\}$ with $S^+_{t}$ as inputs by solving the following LP. Here we choose an arbitrary finite $c>h>0$ (motivating purchasing as little inventory as needed with minimal holding to meet sales $S^+_{t}$):
\begin{align}
& \min_{I^+, {x}^+ \geq 0}  \ \ \sum_{t=0}^T \left[h I_{t+1}^+ + c \hat{x}^+_t\right] \\
&  S^{+}_{t}  + I_{t+1}^+ = I_{t}^+ + \bar{I}^{t+1}  \mathbbm{1}_{t<L}  + x_{t-L}^+\mathbbm{1}_{t \geq L_{l}} 
 \quad \forall \  t \in \{0,...,T-1\}\\
& \lambda  {x}^+_t  \leq {x}_t \quad \forall \  t \in \{0,...,T-1\}\\
& I_{0}^+  = \bar{I}^0
\end{align}
We know this is feasible as $P^{BIO-\lambda}$ was feasible and because $\lambda S^+_t \leq S_t$ per our earlier selection.
We then set $\{I^-\} = (\{I\}_{\lambda} - \lambda \{I^+\})/(1-\lambda)$ and $\{\hat{x}^-\} = (\{\hat{x}\}_{\lambda} - \lambda \{\hat{x}^+\})/(1-\lambda)$. Observe that by design $\{\hat{x}^+,I^+\}$ and $\{\hat{x}^-,I^-\}$ are non-negative and $\{\hat{x}^-, S^-, I^-\}$ satisfy the inventory balance constraints of $P^{BIO-0}$. This can be gathered by eliminating the terms related to $\{\hat{x}^+, S^+, I^+\}$  in $P^{BIO-\lambda}$. Moreover, because the $\lambda-$superposition of each feasible variable is maintained, the $BIO-0, BIO-1$ objectives also satisfy the $\lambda-$superposition, i.e.,  $Z_{BIO-\lambda} = \lambda Z_{BIO-1} + (1-\lambda)Z_{BIO-0}$.



Now having shown bi-directional feasibility, we begin with the optimal solution to problem  
$P^{BIO-\lambda}$. We then generate feasible solutions to problems $P^{BIO-0}$ and $P^{BIO-1}$. Each of these can be improved to its respective optimal solution which is again feasible to $P^{BIO-\lambda}$ which by definition cannot have a better objective than the optimal solution we began with. This means each step here resulted in an optimal transition and therefore there exists an optimal allocation $x$ to $P^{BIO-\lambda}$, i.e., $x^*_{BIO-\lambda} $ such that $x^*_{BIO-\lambda} = \lambda x^*_{BIO-1} + (1-\lambda)x^*_{BIO-0}$ and it corresponds to an optimal objective $Z^*_{BIO-\lambda}$ that satisfies $Z^*_{BIO-\lambda} = \lambda Z^*_{BIO-1} + (1-\lambda)Z^*_{BIO-0}$. Hence, the theorem. \hfill \halmos

\color{black}

\section{Proof of Proposition~\ref{Integral_ExtremePoints}}
\begin{proposition}
 The extreme points of the uncertainty set $U$ described in Eq.(\ref{eqn:uncertainty_set}) with integral bounds are integral. 
\end{proposition}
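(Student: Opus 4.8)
The plan is to exploit the block structure of $U$ and then count the constraints active at a vertex. First I would observe that the constraints in \eqref{eqn:uncertainty_set} couple demand variables only within a common channel $m \in \{o,b\}$ and period $t \in \ca{T}$: there is no coupling across different channel--period pairs. Hence $U$ is a Cartesian product of polytopes, one per pair $(m,t)$, and a point lies at an extreme point of $U$ if and only if each of its $(m,t)$-restrictions is an extreme point of the corresponding block. It therefore suffices to prove integrality of the extreme points of a single block, which is a polytope $P = \{ D \in \mathbb{R}^n : \ell_i \le D_i \le u_i \ \forall i, \ L \le \mathbf{e}^T D \le \Upsilon \}$, where $n$ is the number of nodes (or zones) for that channel and all of $\ell_i, u_i, L, \Upsilon$ are integers by assumption.

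Next I would characterize the vertices of $P$ through their active constraints. A feasible $D$ is an extreme point precisely when the constraints active at $D$ contain $n$ linearly independent rows. The available rows are the $2n$ box inequalities $D_i \le u_i$ and $-D_i \le -\ell_i$, together with the two budget inequalities $\mathbf{e}^T D \le \Upsilon$ and $-\mathbf{e}^T D \le -L$. The crucial observation is that the two budget inequalities correspond to the parallel hyperplanes $\mathbf{e}^T D = \Upsilon$ and $\mathbf{e}^T D = L$; when $L < \Upsilon$ at most one of them can be active at any feasible $D$, and when $L = \Upsilon$ they coincide into a single hyperplane. Thus the active set at a vertex contributes at most one independent budget row.

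From here the conclusion follows by a short case split. If no budget row is active, then $n$ independent box rows are active; since independent box rows must involve distinct coordinates, every $D_i$ is pinned to an integral bound $\ell_i$ or $u_i$, so $D$ is integral. If exactly one budget hyperplane, say $\mathbf{e}^T D = \Upsilon$, is active, then reaching rank $n$ requires $n-1$ further independent box rows, which fix $n-1$ distinct coordinates---say all but coordinate $j$---to integral bounds; the remaining coordinate is then determined by the budget equation as $D_j = \Upsilon - \sum_{i \ne j} D_i$, a difference of integers, hence integral. In either case $D$ is integral, which is the claim.

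The main obstacle---indeed the only delicate point---is the linear-independence bookkeeping behind the second case: I must confirm that $n-1$ active box rows together with one budget row genuinely have rank $n$. This holds because the span of any $n-1$ distinct unit vectors $e_i$ ($i \ne j$) is the coordinate hyperplane $\{x : x_j = 0\}$, which does not contain $\mathbf{e}$, so the budget row is independent of them; and two box rows on the same coordinate are $e_i$ and $-e_i$, which are dependent, forcing independent box rows onto distinct coordinates (and if $\ell_i = u_i$ they still fix $D_i$ integrally). An attractive alternative that bypasses this casework entirely is to note that the block constraint matrix has the consecutive-ones property---each box row is an interval of length one and the budget row is the full interval $[1,n]$---so it is totally unimodular, and with integral right-hand sides its vertices are automatically integral. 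I would present the direct active-constraint argument as the main proof and note the totally-unimodular observation as a one-line alternative.
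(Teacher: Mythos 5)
Your proof is correct, but it takes a genuinely different route from the paper's. The paper argues indirectly via LP optimality: within a single channel--period block it fixes an arbitrary linear objective $\sum_{l} c_l D_l$ over the set in Eq.~(\ref{eqn:uncertainty_set}) and shows by a pairwise exchange argument that an integral optimal solution always exists --- whenever two coordinates are fractional, demand is shifted from the lower-cost coordinate to the higher-cost one with the total held fixed until a box bound is hit, reducing the number of fractional coordinates one at a time, and a single remaining fractional coordinate is rounded toward whichever integer the cost favors (feasibility of that rounding being guaranteed by the integral budget bounds). Integrality of every vertex then follows from the standard fact that each vertex is the unique optimizer of some linear objective, a step the paper leaves implicit. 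You instead characterize the vertices directly by rank-counting active constraints: the two budget rows are parallel, hence contribute at most one independent row, so at least $n-1$ coordinates are pinned to integral box bounds and the last is determined as a difference of integers; your linear-independence bookkeeping for this case is sound. Both proofs rest on the same channel--period product decomposition of $U$. As for what each buys: the paper's exchange argument is constructive --- it shows how to repair a fractional adversarial demand without degrading the objective, which matches how the subproblem is actually used --- whereas your rank-counting argument is more self-contained (it never needs the optimality-to-vertex correspondence), and your totally-unimodular remark via the interval-matrix structure is the most general of all, extending immediately to laminar or hierarchical budget families of the kind the paper alludes to in its discussion of hierarchical forecasts.
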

\begin{proof} {\em Proof:} Because the set $U$ in Eq.(\ref{eqn:uncertainty_set}) decomposes by channel and time period, without loss of generality we only present the result for the  walk-in channel and for a single time period by dropping the time subscript. To prove integrality, we consider the following LP where $c_l \ \forall l \in \Lambda$ can be chosen to be any vector of constants:
\begin{align}
    \min_{D_l} \  \  & \sum_{l\in \Lambda} c_l D_l\\
    & \bar{D}^{L}_{l}\leq {D}_{l} \leq \bar{D}^{U}_{l} \qquad \forall \ l \in \Lambda \\
    &\bar{\bar{D}}^{L}\leq \sum_{l \in \Lambda} {D}_{l} \leq \bar{\bar{D}}^{U} 
\end{align}
Consider any corresponding optimal solution of this LP. We want to show that there exists an extreme point solution to the LP that is integral and optimal assuming $\bar{D}^{L}_{l}, \bar{D}^{U}_{l} \ \forall l \in \Lambda$ and $ \bar{\bar{D}}^{L}, \bar{\bar{D}}^{U}$ are integral. 

Case 1: If all but one location say $l'$ have integral $D_l$ then $D_{l'}$ can also be chosen to be integral by decreasing or increasing to the nearest integer depending on whether $c_{l'} \leq 0 $  or vice versa, and without loss of generality, this move will only improve the optimal objective. 

Case 2: Next, suppose there exists at least one pair of locations that have non-integral solutions. Here, we can decrease the demand for the one with lower $c_l$ and increase the demand in the other location with higher $c_l$ (if the costs are equal, then pick any one location arbitrarily) until one of demand variables hits a local lower or upper bound without impacting the budget constraint. At this point we have decreased the number of locations with non-integral demands by 1 and improved the objective, if at all. If we repeat this procedure for every pair of non-integral demand locations, we will reduce the problem to the case of non-integrality at no more than one location, which can be resolved per Case-1 discussed earlier. 
\hfill \halmos
\end{proof}

\section{Proof of Proposition~\ref{RLT_subproblem}}
\begin{proof} {\em Proof:} Constraints (\ref{wsuperposition}-\ref{wbrickbudget}) together with the binary nature of $w$ describe the discrete elements of the uncertainty set $U$. We know this is a just a sub-set of $U$ but contains the optimal adversarial demand solution to the problem due to Proposition~\ref{Integral_ExtremePoints}.
Note that we are implicitly enforcing the following superposition constraints in the budget restrictions (\ref{wonlinebudget}-\ref{wbrickbudget}) : 
$ {D}_{tl}^b = \sum_{k \in K^b_{tl}} \overline{D}_{tlk}^b
  w^b_{tlk}; \quad
    {D}_{tz}^o = \sum_{k \in K^o_{tz}}\overline{D}_{tzk}^o
  w^o_{tzk}$. We linearize the bilinear terms in the objective by setting $\alpha^*_{tlk}  = \alpha_{tl} w^b_{tlk} $ and
$\beta^*_{tlk}  = \beta_{tl} w^o_{tlk} $ and employ the relaxation and linearization technique (RLT) to exactly linearize these terms utilizing the binary nature of variables $w$ and the definition of $M$ to get constraints (\ref{Malphastar}-\ref{alphastar}). \hfill \halmos

Note that $M$ can be tightened by having it tailored by time period and node or zone in an actual implementation, more specifically with $M_{tl} = p^b_{tl} + b^b_{tl} + (T-t)h_l $, $M_{tz} = p^o_{t} + b^o_{t} + \max_{l \in \Lambda} \{(T-t)h_l-c_{lz}\} $.
\end{proof}
\section{Adaptation of the CCG method to solve BIO-$\lambda$ problem}~\label{CCGmaster}
We incorporate the exact MIP sub-problem given in Proposition~\ref{RLT_subproblem} within the column and cut generation (CCG) framework proposed by~\citet{zeng2013solving} to solve two-stage linear optimization problems. {An important aspect of our sub-problem is that its feasibility is guaranteed regardless of the values of its inputs $({\bf x}, {\bf s}^{b+})$, i.e., the solution space is non-empty, because zero fulfillment is always a feasible solution. This means we satisfy the {\em relatively complete recourse} assumption.}  This holds even in the presence of additional business rules such as fulfillment capacity constraints or fulfillment time-window goals, as a zero fulfillment is readily available as a feasible solution. The CCG framework by~\citet{zeng2013solving} only address problems that satisfies this property, which suffices for our setting. For generalizations, refer to \citet{bertsimas2018scalable}'s extension of the CCG framework.

In the CCG method, a master problem is formulated using a finite set of feasible ${\bf D}^{-}$ uncertainty values. Let $\{{\bf D}^1,\cdots, {\bf D}^j\}$ a set of feasible demand vectors for ${\bf D}^- \in U
$. We formulate the master problem as follows which we denote by $MP_j = MP(\{{\bf D}^1,\cdots, {\bf D}^j\})$:
{\allowdisplaybreaks[1]
\begin{align}
& Z^{MP_j} = 
 \max_{\substack{\eta, {\bf D}^+ \in U
 \\(x,s^+, s^-,y, I)\geq 0}}  \qquad \eta  + \sum_{t \in \ca{T}} \sum_{l \in \Lambda} 
   \left[(p^b_{tl} +b^b_{tl} ) s^{b+}_{tl} - b^b_{tl} \lambda D^{b+}_{tl} - 
    C_{l} x_{tl}
                           \right]   \\
\nonumber   & \textrm{subject to}\\
\nonumber & \ \eta \leq  \sum_{t \in \ca{T},l \in \Lambda} \left[
   (p^b_{tl} +b^b_{tl}) s^{bi-}_{tl}             + \sum_{z \in Z}
                  (p_t^o +b^o_t -c_{lz}  )y_{tlz}^i  - b^b_{tl} (1-\lambda) D^{bi-}_{tl}  - h_l I_{t+1,l}^i\right] 
                  - \sum_{t \in \ca{T},z \in Z}  b_t^o D_{tz}^{oi}
                 \\&   && \hspace{-4cm} \forall i < j\\
&  \ s^{b+}_{tl} \leq  \lambda {D}_{tl}^{b+}   && \hspace{-4cm} \forall \ l \in \Lambda, t \in \ca{T}\\
&  \ s^{bi-}_{tl} \leq   (1- \lambda) {D}_{tl}^{bi-}   && \hspace{-4cm} \forall \ l \in \Lambda, t \in \ca{T}, i <j\\
&  \ \sum_{l \in
  \Lambda} y^i_{tlz} \leq {D}_{tz}^{oi}  && \hspace{-4cm} \forall \    z \in
                                                   Z, t \in \ca{T},i<j\\
&  \ s^{b+}_{tl}+ s^{bi-}_{tl}  + \sum_{z\in Z}
  y_{tlz}^i + I^i_{t+1,l} = I^i_{tl}  + \bar{I}_{l}^{t+1}
  \mathbbm{1}_{t<L_l} + x_{t-L_{l},l} \mathbbm{1}_{t \geq  L_{l}} 
 && \hspace{-4cm} \forall \ l \in \Lambda,
                                               t \in \ca{T} , i<j\\
 &  \ I_{0l}^i = \bar{I}_{0l}^0&&  \hspace{-4cm} \forall \ l \in
                                                       \Lambda, i<j 
  \end{align}
}     
The CCG algorithm given below 
solves the master and the sub-problem alternatively until the desired tolerance-based stopping criterion is satisfied. This solution technique can be viewed as the master program generating a sequence of beneficial allocations, the sub-problem adding corresponding adversarial demand `cuts' to master and the iterations stop when the master problem converges in their achieved objective. 
\begin{algorithm}
\caption{Column and Cut Generation (CCG) Approach to inventory allocation}\label{alg:pureRO_Benders}
{\bf Input}: Stopping criteria tolerance $\epsilon$ and a small number $\delta$ (say $10^{-5}$)\\
{\bf Initialize}: $j = 0$, $LB^0 = -\infty$, $UB^0 = +\infty$, demand sample set $U^0 = \{{\bf D}^0\}$, ${\bf x}^* = {\bf 0} $
  \begin{henumerate}
\item j := j+1
\item Solve master problem $MP(U^j)$ to get $({\bf x}^j, {\bf s}^{b+,j}, \eta^j)$ and update $UB^j = Z^{MP_j}$.  
\item Solve sub-problem $SP({\bf x}^j, {\bf s}^{b+,j})$ to get ${\bf D}^{j}$ and  update $LB^j= \max \{LB^{j-1}, Z^{SP({\bf x}^j, {\bf s}^{b+,j})} + Z^{MP_j} - \eta^j\}$ and $U^j = U^{j-1} \cup \{{\bf D}^{j}\}$. Also, if $LB^j= Z^{SP({\bf x}^j, {\bf s}^{b+,j})} + Z^{MP_j} - \eta^j$, update ${\bf x}^* = {\bf x}^j$ 
\item If $\frac{UB^j-LB^j}{|LB^j| + \delta} \leq \epsilon$, return ${\bf x}^*$.
Otherwise go to step 1.
\end{henumerate}
\end{algorithm}
\begin{proposition}
\citep{zeng2013solving} Algorithm~\ref{alg:pureRO_Benders} converges to an optimal solution of SBIO-$\lambda$ problem in finite iterations. 
\end{proposition}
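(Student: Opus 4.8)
The plan is to verify that our CCG implementation satisfies the three hypotheses under which the convergence theorem of \citet{zeng2013solving} applies --- finiteness of the pool of candidate worst-case demands, exactness of the sub-problem, and relatively complete recourse --- and then to run the standard monotone-bounds termination argument on the sequences $\{UB^{j}\}$ and $\{LB^{j}\}$ produced by Algorithm~\ref{alg:pureRO_Benders}.

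First I would argue that the sub-problem only ever returns demands drawn from a finite set. For fixed dual multipliers $(\alpha,\beta,\gamma)$ the inner problem in \eq{alphasub} is linear in ${\bf D}^{-}$ over the polyhedron $U$, so an optimal ${\bf D}^{-}$ may always be taken at an extreme point of $U$; by Proposition~\ref{Integral_ExtremePoints} every such extreme point is integral, and since $U$ is bounded with integral bounds it contains only finitely many integral points, so the extreme-point set is finite. By Proposition~\ref{RLT_subproblem} the MIP reformulation of the sub-problem is exact, hence each ${\bf D}^{j}$ generated in Step~3 lies in this finite pool. Feasibility of the sub-problem for every input $({\bf x}^{j},{\bf s}^{b+,j})$ --- relatively complete recourse --- holds because zero fulfillment is always available, which guarantees the bounds produced at each iteration are finite.

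Next I would establish the bounding properties. Because $MP_{j}$ enforces the recourse cut only over the finite sample $U^{j}\subseteq U$, it is a relaxation of SBIO-$\lambda$, so $UB^{j}=Z^{MP_j}\ge Z^{*}_{SBIO-\lambda}$; since $U^{j}\supseteq U^{j-1}$ the sequence $\{UB^{j}\}$ is non-increasing. The sub-problem evaluated at $({\bf x}^{j},{\bf s}^{b+,j})$ returns the exact inner min-max value for that first-stage decision, and combining it with the first-stage objective terms in Step~3 yields an achievable objective for SBIO-$\lambda$, so $LB^{j}\le Z^{*}_{SBIO-\lambda}$; by the $\max$ in Step~3 the sequence $\{LB^{j}\}$ is non-decreasing. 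Thus $LB^{j}\le Z^{*}_{SBIO-\lambda}\le UB^{j}$ throughout.

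The core of the termination argument, which I expect to be the delicate step, is to show that until the gap closes the sub-problem must return a \emph{new} extreme point. Suppose at some iteration the returned ${\bf D}^{j}$ already lies in $U^{j-1}$. Because the sub-problem value is the minimum of the inner recourse objective over all of $U$ while $\eta^{j}$ is that same objective minimized only over $U^{j-1}$, the presence of ${\bf D}^{j}$ in $U^{j-1}$ forces $\eta^{j}=Z^{SP}({\bf x}^{j},{\bf s}^{b+,j})$; substituting into Step~3 gives $LB^{j}\ge Z^{SP}+Z^{MP_j}-\eta^{j}=Z^{MP_j}=UB^{j}$, and with $LB^{j}\le UB^{j}$ the stopping test is met. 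Hence, as long as the algorithm has not terminated, every iteration appends a previously unseen extreme-point demand to $U^{j}$; since the pool is finite this can occur only finitely often, so the algorithm halts after finitely many iterations with $UB=LB=Z^{*}_{SBIO-\lambda}$ and recorded ${\bf x}^{*}$ optimal. The main obstacle is making this implication airtight --- in particular verifying that the master's per-scenario recourse objective for a fixed first-stage decision coincides with the sub-problem's inner evaluation at the same demand, which hinges on both problems sharing an identical fulfillment structure and on relatively complete recourse keeping the bounds finite.
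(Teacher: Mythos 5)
Your proposal is correct and follows essentially the same route as the paper, which likewise establishes relatively complete recourse (zero fulfillment is always feasible), sub-problem exactness via Propositions~\ref{Integral_ExtremePoints} and~\ref{RLT_subproblem}, and finiteness of the discrete demand pool, and then defers the monotone-bounds/finite-pool termination argument to \citet{zeng2013solving} rather than spelling it out as you do. The only cosmetic imprecision is that the MIP sub-problem may return any integral point of $U$ rather than an extreme point, but since $U$ is bounded the set of integral points is still finite, so your counting argument goes through unchanged.
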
 

Note that the CCG method and its finite convergence to optimality is independent of the type of continuous or integral set chosen for the allocations $x$ and $D^+$. It assumes that an oracle is used to solve the sub-problem to optimality (or alternatively each time a new extreme point or discrete demand is elicited and incorporated).

Note also that we return the allocation ${\bf x}^*$, defined as the allocation that yields the best lower-bound computed this far, instead of ${\bf x}^j$, the current master problem solution.  
The intuition is that the sub-problem objective captures the true value of an allocation and that might not increase through the iterations as the algorithm explores various first stage solutions. We include a pictorial representation of this intuition in Fig.~\ref{fig:benders_intuition}. With this modification and other practical safeguards one may impose such as tolerance or time limits, we end up finding the best solution with the available resources, and noting that more iterations/time leads to improvement in solution quality. 
\begin{figure}[t]
  \begin{center}
     \includegraphics[scale=0.3]{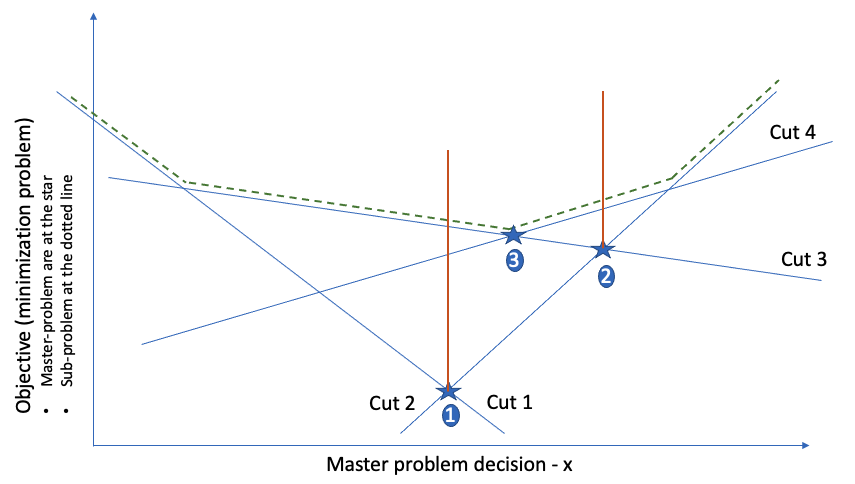}
 \caption{The intuition of how a cut-generation algorithm would proceed for a minimization problem. Observe how the master problem objective serves as a lower bound and the sub-problem objective as an upper bound. Also, observe that at the end of iteration 2, the best solution $x$ is that of iteration 1 as the sub-problem objective is better.}
 \label{fig:benders_intuition}
\end{center}
\end{figure}

\end{APPENDICES}

 \bibliographystyle{informs2014}
\bibliography{References}

\begin{thebibliography}{46}
\providecommand{\natexlab}[1]{#1}
\providecommand{\url}[1]{\texttt{#1}}
\providecommand{\urlprefix}{URL }

\bibitem[{Acimovic \protect\BIBand{} Graves(2015)}]{acimovic2015making}
Acimovic J, Graves SC (2015) Making better fulfillment decisions on the fly in
  an online retail environment. \emph{Manufacturing \& Service Operations
  Management} 17(1):34--51.

\bibitem[{Acimovic \protect\BIBand{} Graves(2017)}]{acimovic2017mitigating}
Acimovic J, Graves SC (2017) Mitigating spillover in online retailing via
  replenishment. \emph{Manufacturing \& Service Operations Management}
  19(3):419--436.

\bibitem[{Ali et~al.(2017)Ali, Michael, Ajay, Arun, Yingjie, Xuan, Christopher,
  Brian, Raymond, \protect\BIBand{} Yada}]{ali2017optimization}
Ali K, Michael D, Ajay D, Arun H, Yingjie L, Xuan L, Christopher M, Brian QL,
  Raymond T, Yada Z (2017) An optimization framework for a multi-objective
  omni-channel order fulfilment problem. \emph{IIE Annual Conference.
  Proceedings}, 2051--2056 (Institute of Industrial and Systems Engineers
  (IISE)).

\bibitem[{Bandi \protect\BIBand{} Bertsimas(2012)}]{bandi2012tractable}
Bandi C, Bertsimas D (2012) Tractable stochastic analysis in high dimensions
  via robust optimization. \emph{Mathematical programming} 134(1):23--70.

\bibitem[{Bandi et~al.(2019)Bandi, Han, \protect\BIBand{}
  Nohadani}]{bandi2019sustainable}
Bandi C, Han E, Nohadani O (2019) Sustainable inventory with robust
  periodic-affine policies and application to medical supply chains.
  \emph{Management Science} 65(10):4636--4655.

\bibitem[{Ben-Tal et~al.(2004)Ben-Tal, Goryashko, Guslitzer, \protect\BIBand{}
  Nemirovski}]{ben2004adjustable}
Ben-Tal A, Goryashko A, Guslitzer E, Nemirovski A (2004) Adjustable robust
  solutions of uncertain linear programs. \emph{Mathematical programming}
  99(2):351--376.

\bibitem[{Bertsimas et~al.(2011)Bertsimas, Brown, \protect\BIBand{}
  Caramanis}]{bertsimas2011theory}
Bertsimas D, Brown DB, Caramanis C (2011) Theory and applications of robust
  optimization. \emph{SIAM review} 53(3):464--501.

\bibitem[{Bertsimas et~al.(2018)Bertsimas, Gupta, \protect\BIBand{}
  Kallus}]{bertsimas2018robust}
Bertsimas D, Gupta V, Kallus N (2018) Robust sample average approximation.
  \emph{Mathematical Programming} 171:217--282.

\bibitem[{Bertsimas \protect\BIBand{} Shtern(2018)}]{bertsimas2018scalable}
Bertsimas D, Shtern S (2018) A scalable algorithm for two-stage adaptive linear
  optimization. \emph{arXiv preprint arXiv:1807.02812} .

\bibitem[{Bertsimas \protect\BIBand{} Sim(2004)}]{bertsimas2004price}
Bertsimas D, Sim M (2004) The price of robustness. \emph{Operations research}
  52(1):35--53.

\bibitem[{Bourlier(2020)}]{Bourlier2020}
Bourlier A (2020) \emph{Retail {D}ive}
  \urlprefix\url{https://www.retaildive.com/news/how-covid-19-prompted-new-thinking-for-omnichannel/592010/}.

\bibitem[{DeValve et~al.(2023)DeValve, Wei, Wu, \protect\BIBand{}
  Yuan}]{devalve2023understanding}
DeValve L, Wei Y, Wu D, Yuan R (2023) Understanding the value of fulfillment
  flexibility in an online retailing environment. \emph{Manufacturing \&
  service operations management} 25(2):391--408.

\bibitem[{Gallino \protect\BIBand{} Moreno(2014)}]{gallino2014integration}
Gallino S, Moreno A (2014) Integration of online and offline channels in
  retail: The impact of sharing reliable inventory availability information.
  \emph{Management Science} 60(6):1434--1451.

\bibitem[{Gallino et~al.(2017)Gallino, Moreno, \protect\BIBand{}
  Stamatopoulos}]{gallino2017channel}
Gallino S, Moreno A, Stamatopoulos I (2017) Channel integration, sales
  dispersion, and inventory management. \emph{Management Science}
  63(9):2813--2831.

\bibitem[{Gao \protect\BIBand{} Su(2017{\natexlab{a}})}]{gao2017omnichannel}
Gao F, Su X (2017{\natexlab{a}}) Omnichannel retail operations with
  buy-online-and-pick-up-in-store. \emph{Management Science} 63(8):2478--2492.

\bibitem[{Gao \protect\BIBand{} Su(2017{\natexlab{b}})}]{gao2017online}
Gao F, Su X (2017{\natexlab{b}}) Online and offline information for omnichannel
  retailing. \emph{Manufacturing \& Service Operations Management}
  19(1):84--98.

\bibitem[{Gorissen et~al.(2015)Gorissen, Yan{\i}ko{\u{g}}lu, \protect\BIBand{}
  den Hertog}]{gorissen2015practical}
Gorissen BL, Yan{\i}ko{\u{g}}lu {\.I}, den Hertog D (2015) A practical guide to
  robust optimization. \emph{Omega} 53:124--137.

\bibitem[{Gotoh et~al.(2023)Gotoh, Kim, \protect\BIBand{} Lim}]{gotoh2023data}
Gotoh Jy, Kim MJ, Lim AE (2023) A data-driven approach to beating {SAA}
  out-of-sample. \emph{Operations Research} .

\bibitem[{Govindarajan et~al.(2021{\natexlab{a}})Govindarajan, Sinha,
  \protect\BIBand{} Uichanco}]{govindarajan2021distribution}
Govindarajan A, Sinha A, Uichanco J (2021{\natexlab{a}}) Distribution-free
  inventory risk pooling in a multilocation newsvendor. \emph{Management
  Science} 67(4):2272--2291.

\bibitem[{Govindarajan et~al.(2021{\natexlab{b}})Govindarajan, Sinha,
  \protect\BIBand{} Uichanco}]{JolineNaval}
Govindarajan A, Sinha A, Uichanco J (2021{\natexlab{b}}) Joint inventory and
  fulfillment decisions for omnichannel retail networks. \emph{Naval Research
  Logistics (NRL)} 68(6):779--794.

\bibitem[{Harsha et~al.(2019{\natexlab{a}})Harsha, Subramanian,
  \protect\BIBand{} Ettl}]{harsha2019practical}
Harsha P, Subramanian S, Ettl M (2019{\natexlab{a}}) A practical price
  optimization approach for omnichannel retailing. \emph{INFORMS Journal on
  Optimization} 1(3):241--264.

\bibitem[{Harsha et~al.(2019{\natexlab{b}})Harsha, Subramanian,
  \protect\BIBand{} Uichanco}]{OCPX}
Harsha P, Subramanian S, Uichanco J (2019{\natexlab{b}}) Dynamic pricing of
  omnichannel inventories. \emph{Manufacturing \& Service Operations
  Management} 21(1):47--65.

\bibitem[{Iancu \protect\BIBand{} Trichakis(2014)}]{iancu2014pareto}
Iancu DA, Trichakis N (2014) Pareto efficiency in robust optimization.
  \emph{Management Science} 60(1):130--147.

\bibitem[{Jackson et~al.(2019)Jackson, Muckstadt, \protect\BIBand{}
  Li}]{jackson2019multiperiod}
Jackson PL, Muckstadt JA, Li Y (2019) Multiperiod stock allocation via robust
  optimization. \emph{Management Science} 65(2):794--818.

\bibitem[{Jasin \protect\BIBand{} Sinha(2015)}]{jasin2015lp}
Jasin S, Sinha A (2015) An lp-based correlated rounding scheme for multi-item
  ecommerce order fulfillment. \emph{Operations Research} 63(6):1336--1351.

\bibitem[{Jati et~al.(2023)Jati, Ekambaram, Pal, Quanz, Gifford, Harsha,
  Siegel, Mukherjee, \protect\BIBand{} Narayanaswami}]{jati2023hier}
Jati A, Ekambaram V, Pal S, Quanz B, Gifford WM, Harsha P, Siegel S, Mukherjee
  S, Narayanaswami C (2023) Hierarchical proxy modeling for improved {HPO} in
  time series forecasting. \emph{Proceedings of the 29th ACM SIGKDD Conference
  on Knowledge Discovery and Data Mining}, 891–900, KDD '23.

\bibitem[{Kelly(1956)}]{kelly1956new}
Kelly JL (1956) A new interpretation of information rate. \emph{{T}he {B}ell
  {S}ystem {T}echnical {J}ournal} 35(4):917--926.

\bibitem[{Kleywegt et~al.(2002)Kleywegt, Shapiro, \protect\BIBand{} Homem-de
  Mello}]{kleywegt2002sample}
Kleywegt AJ, Shapiro A, Homem-de Mello T (2002) The sample average
  approximation method for stochastic discrete optimization. \emph{SIAM Journal
  on optimization} 12(2):479--502.

\bibitem[{Lei et~al.(2018)Lei, Jasin, \protect\BIBand{} Sinha}]{lei2018joint}
Lei Y, Jasin S, Sinha A (2018) Joint dynamic pricing and order fulfillment for
  e-commerce retailers. \emph{Manufacturing \& Service Operations Management}
  20(2):269--284.

\bibitem[{Lim et~al.(2021)Lim, Jiu, \protect\BIBand{} Ang}]{lim2021integrating}
Lim YF, Jiu S, Ang M (2021) Integrating anticipative replenishment allocation
  with reactive fulfillment for online retailing using robust optimization.
  \emph{Manufacturing \& Service Operations Management} 23(6):1616--1633.

\bibitem[{Lim \protect\BIBand{} Wang(2017)}]{lim2017inventory}
Lim YF, Wang C (2017) Inventory management based on target-oriented robust
  optimization. \emph{Management Science} 63(12):4409--4427.

\bibitem[{Makridakis et~al.(2022)Makridakis, Spiliotis, \protect\BIBand{}
  Assimakopoulos}]{makridakis2022m5}
Makridakis S, Spiliotis E, Assimakopoulos V (2022) M5 accuracy competition:
  Results, findings, and conclusions. \emph{International Journal of
  Forecasting} 38(4):1346--1364, ISSN 0169-2070,
  \urlprefix\url{http://dx.doi.org/https://doi.org/10.1016/j.ijforecast.2021.11.013},
  special Issue: M5 competition.

\bibitem[{Nageswaran et~al.(2020)Nageswaran, Cho, \protect\BIBand{}
  Scheller-Wolf}]{nageswaran2020consumer}
Nageswaran L, Cho SH, Scheller-Wolf A (2020) Consumer return policies in
  omnichannel operations. \emph{Management Science} 66(12):5558--5575.

\bibitem[{NeilsonIQ(2022)}]{neilson}
NeilsonIQ (2022)
  \urlprefix\url{https://nielseniq.com/global/en/insights/success-story/2022/pivoting-to-long-term-growth-with-omnichannel-strategy/}.

\bibitem[{Oreshkin et~al.(2020)Oreshkin, Carpov, Chapados, \protect\BIBand{}
  Bengio}]{oreshkin2020nbeats}
Oreshkin BN, Carpov D, Chapados N, Bengio Y (2020) N-beats: Neural basis
  expansion analysis for interpretable time series forecasting.
  \emph{International Conference on Learning Representations}.

\bibitem[{Paterson et~al.(2011)Paterson, Kiesm{\"u}ller, Teunter,
  \protect\BIBand{} Glazebrook}]{paterson2011inventory}
Paterson C, Kiesm{\"u}ller G, Teunter R, Glazebrook K (2011) Inventory models
  with lateral transshipments: A review. \emph{European Journal of Operational
  Research} 210(2):125--136.

\bibitem[{Rahimian \protect\BIBand{}
  Mehrotra(2019)}]{rahimian2019distributionally}
Rahimian H, Mehrotra S (2019) Distributionally robust optimization: A review.
  \emph{arXiv preprint arXiv:1908.05659} .

\bibitem[{Rotando \protect\BIBand{} Thorp(1992)}]{rotando1992kelly}
Rotando LM, Thorp EO (1992) The {K}elly criterion and the stock market.
  \emph{The American Mathematical Monthly} 99(10):922--931.

\bibitem[{Salinas et~al.(2019)Salinas, Flunkert, \protect\BIBand{}
  Gasthaus}]{salinas2019deepar}
Salinas D, Flunkert V, Gasthaus J (2019) Deep{AR}: Probabilistic forecasting
  with autoregressive recurrent networks. \emph{International Journal of
  Forecasting} 8(2):136--153.

\bibitem[{Saénz et~al.(2022)Saénz, Revilla, \protect\BIBand{}
  Borrella}]{HBR2022}
Saénz MJ, Revilla E, Borrella I (2022) Digital transformation is changing
  supply chain relationships. \emph{Harvard Business Review} .

\bibitem[{Sherali \protect\BIBand{} Adams(1998)}]{sherali1998reformulation}
Sherali HD, Adams WP (1998) Reformulation-linearization techniques for discrete
  optimization problems. \emph{Handbook of Combinatorial Optimization:
  Volume1--3} 479--532.

\bibitem[{Simchi-Levi et~al.(2019)Simchi-Levi, Wang, \protect\BIBand{}
  Wei}]{simchi2019constraint}
Simchi-Levi D, Wang H, Wei Y (2019) Constraint generation for two-stage robust
  network flow problems. \emph{INFORMS Journal on Optimization} 1(1):49--70.

\bibitem[{Tagaras \protect\BIBand{} Cohen(1992)}]{tagaras1992pooling}
Tagaras G, Cohen MA (1992) Pooling in two-location inventory systems with
  non-negligible replenishment lead times. \emph{Management science}
  38(8):1067--1083.

\bibitem[{Xin(2021)}]{xin2021understanding}
Xin L (2021) Understanding the performance of capped base-stock policies in
  lost-sales inventory models. \emph{Operations Research} 69(1):61--70.

\bibitem[{Yang \protect\BIBand{} Qin(2007)}]{yang2007capacitated}
Yang J, Qin Z (2007) Capacitated production control with virtual lateral
  transshipments. \emph{Operations research} 55(6):1104--1119.

\bibitem[{Zeng \protect\BIBand{} Zhao(2013)}]{zeng2013solving}
Zeng B, Zhao L (2013) Solving two-stage robust optimization problems using a
  column-and-constraint generation method. \emph{Operations Research Letters}
  41(5):457--461.

\end{thebibliography}

%
%
%




\end{document}